\def\ZZ{{\mathbb Z}}
\def\RR{{\mathbb R}}
\def\CC{{\mathbb C}}
\def\F{{\mathcal F}}
\def\H{{\mathcal H}}
\def\U{{\mathcal U}}
\def\P{{\mathcal P}}
\def\N{{\mathcal N}}
\def\aaa{{\mathfrak a}}
\def\gg{{\mathfrak g}}
\def\hh{{\mathfrak h}}
\def\kk{{\mathfrak k}}
\def\pp{{\mathfrak p}}
\def\zz{{\mathfrak z}}
\def\Hop{\!\cdot\!}
\newcommand{\vol}{\operatorname{vol}}
\def\d{{\rm d}}
\DeclareMathOperator{\GL}{GL}
\DeclareMathOperator{\trace}{tr}
\DeclareMathOperator{\im}{Im}
\DeclareMathOperator{\re}{Re}
\DeclareMathOperator{\Sp}{Sp}
\DeclareMathOperator{\SL}{SL}
\theoremstyle{plain}
\newtheorem{thm}{Theorem}[section]
\newtheorem{cor}[thm]{Corollary}
\newtheorem{lem}[thm]{Lemma}
\newtheorem{prop}[thm]{Proposition}
\theoremstyle{definition}
\newtheorem{defn}[thm]{Definition}
\theoremstyle{remark}
\newtheoremstyle{Acknowledgements}
  {}
    {}
     {}
     {}
    {\bfseries}
    {}
     {.5em}
     {\thmname{#1}\thmnumber{ }\thmnote{ (#3)}}
\theoremstyle{Acknowledgements}
\date{\today, \currenttime} 
\begin{document}
\title[Holomorphic projection]{Holomrophic projection for $\Sp_2(\RR)$ -- The case of weight $(4,4)$}

\author{Kathrin Maurischat}
\address{\rm {\bf Kathrin Maurischat}, Mathematisches Institut,
   Heidelberg University, Im Neuenheimer Feld 288, 69120 Heidelberg, Germany }
\curraddr{}
\email{\sf maurischat@mathi.uni-heidelberg.de}
\thanks{This work was supported by the European Social Fund.}

\subjclass[2000]{11F41, 11F70}
\begin{abstract}
We define non-holomorphic Poincar\'e series of exponential type for symplectic groups $\Sp_m(\RR)$ and continue them analytically in case $m=2$ for the small weight $(4,4)$.
For this we construct certain Casimir operators and study the spectral properties of their resolvents on $L^2(\Gamma\backslash \Sp_2(\RR))$.
Using the holomorphically continued Poincar\'e series, the holomorphic projection is described in terms of Fourier coefficients using  Sturm's operator.

\end{abstract}

\maketitle
\setcounter{tocdepth}{1}
\tableofcontents


\section*{Introduction}
The explicit description of the projection of functions  to their holomorphic part  is important for several parts of the theory of automorphic functions.
For example it revealed the true role of mock theta functions, and one can use it to  describe the central values of Rankin $L$-functions.
The classical approach  for symplectic groups to holomorphic projection  is by Sturm-type arguments (\cite{sturm}).
Coarsely, given  Fourier coefficients of a function $F$, averaging over their imaginary parts yields Fourier coefficients of a holomorphic function.
This is proven by unfolding the inner product of $F$ against a system of Poincar\'e series.
Sturm~\cite{sturm} invented this method for genus one and high weight.
Panchishkin~\cite{panchishkin} established an approach for arbitrary genus $m$ and high scalar weights $\kappa>2m$.
But it is the case of low weight $\kappa<2m$ which is of arithmetical interest, e.g. in \cite{gross-zagier} the interesting weight for genus one is two.
We develop a method which establishes the holomorphic projection by Sturm's operator for genus two and scalar weight $\kappa=4$. The method also applies to weight $\kappa=3$.
But there other phenomenons arise which do not lead to a proper description of the holomorphic projection (see~\cite{phantoms}).

The description of cusp forms by Poincar\'e series is first systematically presented in~\cite{petersson}.
For $\SL_2(\RR)$ Neuenh\"offer 
\cite{neunhoeffer} establishes the case of weight zero. He introduces essentially two types of Poincar\'e series
\begin{equation*}
 \sum_{\gamma\in\Gamma_\infty\backslash\Gamma}f_\tau(\gamma\Hop z,s)
\end{equation*}
 involving
\begin{equation}\label{p-reihen-erster-art}
 f_\tau(z,s)\:=\: \lvert z\rvert^\tau (1-\lvert z\rvert^2)^s\:,
\end{equation}
respectively
\begin{equation}\label{p-reiehn-zweiter-art}
 f_\tau(z,s)\:=\:e^{2\pi i\tau z} \im(z)^s\:.
\end{equation}
Panchishkin~\cite{panchishkin} studies two-variable Poincar\'e series  
generalizing type~(\ref{p-reihen-erster-art}) above. 
Klingen~\cite{Klingen} uses Poincar\'e series  for arbitrary genus $m$ generalizing type
(\ref{p-reiehn-zweiter-art}) in an  obvious way.
We use Poincar\'e series $\mathcal P_\tau(g,s_1,s_2)$ of exponential type and scalar weight $\kappa$ on the symplectic group $\Sp_m(\RR)$ which are generalizations of the latter choosing
\begin{equation*} f_\tau(g,s_1,s_2)\:=\:
e^{2\pi i\trace(\tau z)}\frac{\trace(\tau\im z)^{s_1}\det(\im z)^{s_2}}{j(g,i)^\kappa}\:,
\end{equation*}
 for complex variables $s_1,s_2$ and $z=g\cdot i$ in the Siegel halfspace. We have the following 
convergence result (Corollary~\ref{cor_konvergenzbereich_u_v}).
\begin{thm}
 These Poincar\'e series $\mathcal P_\tau(g,s_1,s_2)$
converge absolutely and  uniformly on compact sets within
\begin{equation*}
 \{(s_1,s_2)\in\CC^2\mid \re(2s_2+\kappa)>2m\textrm{ and } \re(\frac{2}{m}s_1+2s_2+\kappa)>2m\}\:.
 \end{equation*}
 There, they belong to $L^2(\Gamma\backslash G)$.
\end{thm}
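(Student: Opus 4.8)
The plan is to establish absolute and locally uniform convergence by the Weierstrass $M$-test and then to deduce $L^2$-membership from a uniform majorant. First I would reduce everything to the imaginary parts. Writing $z=g\cdot i\in\mathfrak H_m$ and using the cocycle relation $j(\gamma g,i)=j(\gamma,z)\,j(g,i)$ together with $|j(\gamma,z)|^{-2}=\det\im(\gamma z)/\det\im z$ and $\det\im(g\cdot i)=|j(g,i)|^{-2}$, all the $j$-factors collapse and one obtains for every $\gamma$
\begin{equation*}
 |f_\tau(\gamma g,s_1,s_2)|\;=\;e^{-2\pi\trace(\tau Y_\gamma)}\,\trace(\tau Y_\gamma)^{\re(s_1)}\,(\det Y_\gamma)^{\re(s_2)+\kappa/2}\;=:\;\varphi(Y_\gamma),\qquad Y_\gamma:=\im(\gamma\cdot z),
\end{equation*}
which is meaningful because $\tau$ is a fixed positive definite matrix, so $\trace(\tau Y_\gamma)>0$ and $\det Y_\gamma>0$ for $Y_\gamma$ in the cone $\mathcal P_m$ of positive definite symmetric matrices; note that $\varphi(Y_\gamma)$ no longer depends on the choice of $g$ above $z$. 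Thus the theorem reduces to showing $\sum_{\gamma\in\Gamma_\infty\backslash\Gamma}\varphi(\im(\gamma z))<\infty$, uniformly for $z$ in a compact subset of $\mathfrak H_m$ and $(s_1,s_2)$ in a compact subset of the region.

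The heart of the matter is this estimate. I would use the identity $\im(\gamma z)^{-1}=(CX+D)Y^{-1}\,{}^t(CX+D)+CY\,{}^tC$ for $\gamma=\begin{pmatrix}A&B\\C&D\end{pmatrix}$, $z=X+iY$, and split $\Gamma_\infty\backslash\Gamma$ according to $r=\operatorname{rank}C$. On the stratum $r=0$ one has $\im(\gamma z)=UY\,{}^tU$ with $U\in\GL_m(\ZZ)$ (the identity coset and the Levi cosets), so $\trace(\tau\,\im(\gamma z))\ge\lambda_{\min}(Y)\lambda_{\min}(\tau)\,\|U\|^2$ grows, and the exponential factor in $\varphi$ makes the sum over this stratum finite with no constraint on $(s_1,s_2)$. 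The same exponential argument disposes, inside each higher stratum, of the residual left $\GL_m(\ZZ)$-action on $(C,D)$ (which replaces $\im(\gamma z)$ by ${}^tU^{-1}\im(\gamma z)\,U^{-1}$), reducing the stratum-$r$ sum to one over a reduced family of coprime pairs $(C,D)$ of rank $r$. On that reduced part exactly $r$ eigenvalues of $\im(\gamma z)$ can degenerate to $0$ (bounded below in terms of $CY\,{}^tC$) while the other $m-r$ stay bounded, so $\trace(\tau\,\im(\gamma z))$ stays bounded and tends to $0$ only in the full-rank case $r=m$; a reduction-theory count of the reduced pairs $(C,D)$ of rank $r$ with $|\det(Cz+D)|\le X$, weighted by $\trace^{\re(s_1)}(\det)^{\re(s_2)+\kappa/2}$, then shows that the stratum-$r$ sum converges in a half-space of $(s_1,s_2)$. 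The full-rank stratum $r=m$, where the $\trace^{\re(s_1)}$ factor is active, yields $\re\bigl(\tfrac{2}{m}s_1+2s_2+\kappa\bigr)>2m$, while the single-eigenvalue degeneration present in every stratum $r\ge1$ yields $\re(2s_2+\kappa)>2m$; their intersection is exactly the stated region, and all estimates are locally uniform, so the $M$-test applies.

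I expect this stratified reduction-theory count to be the main obstacle. The subtlety is that $\trace(\tau Y)$ is controlled by the largest eigenvalue of $Y$ rather than by $\det Y$, so the $\trace^{s_1}$ weight does not factor through the $|j(\gamma,z)|$-estimates that handle the Siegel Eisenstein series; one has to control the joint distribution of the smallest and the largest eigenvalue of $\im(\gamma z)$ over the orbit, which forces simultaneous use of reduction theory both for the pairs $(C,D)$ and for the cone $\mathcal P_m$. Positive definiteness of $\tau$ enters precisely here: it guarantees that $e^{2\pi i\trace(\tau z)}$ decays up every cusp, which is what makes the $r=0$ stratum and the residual $\GL_m(\ZZ)$-sums converge with no hypothesis on $(s_1,s_2)$.

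For $L^2$-membership I would show that the majorant $\Phi(g):=\sum_{\gamma\in\Gamma_\infty\backslash\Gamma}|f_\tau(\gamma g,s_1,s_2)|=\sum_\gamma\varphi(\im(\gamma g\cdot i))$ is bounded on $\Gamma\backslash G$: the same estimates, now allowing $\im(g\cdot i)$ to run into the cusps and using the rapid decay of $\varphi$ at infinity, give $\Phi\in L^\infty(\Gamma\backslash G)$. Moreover $f_\tau\in L^1(\Gamma_\infty\backslash G)$, since $\int_{\Gamma_\infty\backslash G}|f_\tau(g,s_1,s_2)|\,dg$ unfolds to the integral of $\varphi(Y)$ against the invariant density $(\det Y)^{-(m+1)}\,dY$ over a fundamental domain, which one checks converges throughout the region. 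Unfolding the Poincar\'e series then gives $\int_{\Gamma\backslash G}\Phi\,dg=\int_{\Gamma_\infty\backslash G}|f_\tau|\,dg<\infty$, whence $|\mathcal P_\tau|^2\le\|\Phi\|_\infty\,\Phi\in L^1(\Gamma\backslash G)$ and $\mathcal P_\tau\in L^2(\Gamma\backslash G)$.
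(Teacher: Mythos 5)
Your reduction to the series of absolute values $\sum_\gamma e^{-2\pi\trace(\tau Y_\gamma)}\trace(\tau Y_\gamma)^{\re s_1}(\det Y_\gamma)^{\re s_2+\kappa/2}$ is exactly the paper's first step (the identification $l_1=s_1$, $l_2=s_2+\kappa/2$, using $|j(g,i)|^{-\kappa}=\det(y)^{\kappa/2}$), and your handling of the case $\re s_1\le 0$ via $\trace(\tau y)^{l_1}\le k\det(y)^{l_1/m}$ is literally the paper's Lemma on traces of positive definite matrices. From there, however, you take a genuinely different route: a stratification of $\Gamma_\infty\backslash\Gamma$ by $\operatorname{rank}C$ together with a reduction-theory count of coprime pairs $(C,D)$, i.e.\ the classical comparison with Eisenstein series. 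The paper explicitly acknowledges this route for absolute convergence but avoids it, instead generalizing Klingen's reproducing-integral trick: one tests the series against the $K$-biinvariant kernel $u(g_z,g_w)=\det(\im z)\det(\im w)/|\det(z-\bar w)|^2$, whose power $u^{l_2}$ is integrable over $G/K$ exactly for $l_2>m$ (by the Cayley transform and Hua's integral over the bounded domain), and a lower bound for the companion integral $L(g_z)$ over translates of the fundamental domain. This packages the convergence condition, the uniformity in $(l_1,l_2)$, \emph{and} the uniform bound in $z$ into two integral estimates, with no counting of $(C,D)$ at all; that is precisely what makes the $L^2$-statement cheap.

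The weak point of your write-up is that the central estimate -- the stratum-$r$ count of reduced pairs $(C,D)$ weighted by $\trace^{\re s_1}(\det)^{\re s_2+\kappa/2}$, carried out \emph{uniformly} as $z$ runs up the cusp so that the majorant $\Phi$ is bounded on all of $\Gamma\backslash G$ -- is asserted rather than proved, and you yourself flag it as the main obstacle. It is not a step that would fail (the exponential decay coming from $\tau>0$ does control the $r=0$ stratum and the residual $\GL_m(\ZZ)$-sums, and keeping the exponential factor throughout is what would let you avoid the unbounded Eisenstein majorant), but as it stands the proposal is a plan for the hard part rather than a proof of it. Two smaller remarks: your attribution of the two inequalities to individual strata is imprecise -- for $\re s_1>0$ the full-rank stratum needs only $\re(2s_2+\kappa)>2m$, since $\trace^{\re s_1}$ is then harmless as $Y_\gamma\to 0$; the correct bookkeeping is the paper's case split on the sign of $\re s_1$, whose two half-spaces glue to the stated region. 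And your final $L^1$-unfolding step is redundant: once $\Phi\in L^\infty(\Gamma\backslash G)$ you are done, because $\Gamma\backslash G$ has finite volume.
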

For high weight $\kappa>2m$ these Poincar\'e series are cusp forms at their point of holomorphicity $(s_1,s_2)=(0,0)$.
Most part of this work is devoted to their analytic continuation for low weight $\kappa\leq 2m$.
For this we apply resolvents of certain Casimir operators.
In case of genus one this approach (see Appendix) is a well-studied    application of the theory of Eisenstein series due to Roelcke \cite{roelcke1}-\cite{roelcke4}.
For genus two there is a series of technical difficulties we have to manage. 
We apply Langlands' theory of Eisensteins series (section~\ref{section_spectrum}) for $L^2(\Gamma\backslash \Sp_2(\RR))$ to localize the spectral roots (section~\ref{sec_resolvents})
of the two Casimir operators
\begin{eqnarray*}
 D_+(u,\Lambda)&=&\prod_{\alpha \textrm{ long root}}\bigl(\check{\alpha}(\Lambda)-u\bigr)\:,\\
D_-(v,\Lambda)&=&\prod_{\alpha \textrm{ short root}}\bigl(\check{\alpha}(\Lambda)-v\bigr)\:.
\end{eqnarray*}
Here $\Lambda\in\aaa_\CC^\ast$ is a spectral parameter (infinitesimal character) encoded by some Cartan subalgebra $\aaa$ of the symplectic lie algebra  and $\check{\alpha}$ 
denotes the coroot of the root $\alpha$. The complex variables $u$ and $v$ are affine linear transforms of $s_1=\frac{1}{2}(v-2u-1)$ and $s_2=\frac{1}{2}(u-(\kappa-m))$.
The existence of the resolvents $R_+(u)$ and $R_-(v)$ of $D_+(u)$ and $D_-(v)$, respectively, as meromorphic functions is restricted by the occurence of continuous spectral components 
in $L^2(\Gamma\backslash \Sp_2(\RR))$. We have 
(Propositions~\ref{prop_discrete_spec}, \ref{prop_cont_spec_2-dim}, \ref{prop_cont_spec_1-dim}):
\begin{thm}
 The resolvent $R_+(u)$ is a meromorphic function  on $\re u>\frac{1}{2}$.
 The resolvent $R_-(v)$ is a meromorphic function  on $\re v>1$.
\end{thm}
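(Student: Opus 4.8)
The plan is to read off both resolvents from the spectral decomposition of $L^2(\Gamma\backslash G)$, $G=\Sp_2(\RR)$. The first step is the observation that, being products of factors $\check\alpha(\Lambda)-u$ over a single Weyl orbit of roots, the operators $D_+(u)$ and $D_-(v)$ are polynomials in the two basic Casimir operators, hence central in the enveloping algebra; so on an irreducible constituent of $L^2(\Gamma\backslash G)$ with infinitesimal character $\Lambda\in\aaa_\CC^\ast$ the operator $D_+(u)$ acts by the scalar $D_+(u,\Lambda)$ and $D_-(v)$ by $D_-(v,\Lambda)$. Writing $\Lambda=\lambda_1\epsilon_1+\lambda_2\epsilon_2$ in standard coordinates of the root system $C_2$, the long coroots send $\Lambda$ to $\pm\lambda_i$ and the short coroots send it to $\pm\lambda_1\pm\lambda_2$, so that
\begin{equation*}
 D_+(u,\Lambda)=(u^2-\lambda_1^2)(u^2-\lambda_2^2),\qquad
 D_-(v,\Lambda)=\bigl(v^2-(\lambda_1+\lambda_2)^2\bigr)\bigl(v^2-(\lambda_1-\lambda_2)^2\bigr).
\end{equation*}
Consequently $R_+(u)=D_+(u)^{-1}$ is the operator acting spectrally by $D_+(u,\Lambda)^{-1}$: it is bounded precisely when $\inf\lvert D_+(u,\Lambda)\rvert>0$ as $\Lambda$ runs over the spectral support, and its singularities in $u$ are the points where $D_+(u,\Lambda)$ vanishes for some $\Lambda$ in that support; likewise for $R_-(v)$.

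Next I would go through the three parts of Langlands' decomposition for $G$. For the discrete spectrum, Proposition~\ref{prop_discrete_spec} locates the admissible $\Lambda$; the only feature used is that, ordered by the Laplace eigenvalue, $\lvert\lambda_i\rvert\to\infty$ while $\re\lambda_i$ stays bounded, so that in every compact subset of a half-plane only finitely many of the vanishing loci $u=\pm\lambda_i$ (respectively $v=\pm\lambda_1\pm\lambda_2$) occur, each an isolated pole of $R_+(u)$ (resp.\ $R_-(v)$) with residue the associated spectral projection. For the two-dimensional continuous spectrum, Proposition~\ref{prop_cont_spec_2-dim} gives $\Lambda\in i\aaa^\ast$, so every $\check\alpha(\Lambda)$ is purely imaginary and $\lvert D_+(u,\Lambda)\rvert\ge(\re u)^4$, $\lvert D_-(v,\Lambda)\rvert\ge(\re v)^4$ on $\re u>0$, $\re v>0$ respectively: no singularity arises here. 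The decisive input is Proposition~\ref{prop_cont_spec_1-dim}, describing the one-dimensional continuous spectrum -- Eisenstein series attached to the Siegel and Klingen maximal parabolics, with Levi subgroups isogenous to $\GL_2(\RR)$ and to $\GL_1(\RR)\times\SL_2(\RR)$, together with the residual one-dimensional pieces cut out along the walls of minimal-parabolic Eisenstein series. For all of these $\re\lambda_i$ is the real part of a cuspidal (or admissible residual) exponent on the $\GL_2$- or $\SL_2$-factor of the Levi, so the Jacquet--Shalika bound yields $\lvert\re\lambda_i\rvert\le\tfrac12$. Hence $D_+(u,\Lambda)$ has no zero in $\re u>\tfrac12$ (the long coroots give $u=\pm\lambda_i$), $D_-(v,\Lambda)$ has none in $\re v>1$ (the short coroots give $v=\pm\lambda_1\pm\lambda_2$, of real part at most $1$), and $\lvert D_+\rvert$, $\lvert D_-\rvert$ are bounded below uniformly on compacta of the respective half-planes.

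Assembling the three contributions: for $u$ in $\re u>\tfrac12$ off a discrete exceptional set $\Sigma$ (the zeros of $D_+(u,\Lambda)$ coming from the discrete spectrum, plus at most finitely many points from residual one-dimensional pieces) one has $\inf\lvert D_+(u,\Lambda)\rvert>0$, so $R_+(u)$ is a bounded operator depending holomorphically on $u$ there, with a finite principal part at each point of $\Sigma$; thus $R_+(u)$ is meromorphic on $\re u>\tfrac12$, and the identical argument with the short roots gives $R_-(v)$ meromorphic on $\re v>1$. I expect essentially all the work to sit in Proposition~\ref{prop_cont_spec_1-dim}: one must determine exactly which residues of the $\Sp_2(\RR)$-Eisenstein series are square-integrable and survive in the one-dimensional continuous part, compute their infinitesimal characters, and check that neither these nor any sequence of cuspidal data pushes $\re\lambda_i$ past $\tfrac12$ -- it is this bound that fixes the boundaries $\re u=\tfrac12$ and $\re v=1$.
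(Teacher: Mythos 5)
Your overall architecture (spectral decomposition, scalar action of $D_\pm$ via the infinitesimal character, boundedness of the inverse away from the zero set) matches the paper's, and your treatment of the discrete and two-dimensional continuous parts is fine. The gap is in the one-dimensional continuous spectrum, and it is the decisive one. Your claimed bound $\lvert\re\lambda_i\rvert\le\tfrac12$ is false there: the relevant exponents are not cuspidal exponents subject to a Jacquet--Shalika-type bound, but locations $0\le c\le 1$ of poles of the rank-one intertwining operators, and $c=1$ is attained. Concretely, the residual component $K_{\alpha_1}(1)=(i\RR,1)$, obtained from the pole at ${}^\dagger\!\Lambda=1$ of the inner $\SL_2$-Eisenstein series (the constant residue, not a cusp form), actually occurs in $L^2(\Gamma\backslash G)$ (Proposition~\ref{prop_auftreten_kontinuierliches spektrum}). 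On it $\re\Lambda_2=1$, so $D_+(u,\Lambda)$ vanishes at $u=1>\tfrac12$, contradicting your assertion that $D_+$ has no zero in $\re u>\tfrac12$ on this part of the spectrum; indeed Proposition~\ref{prop_cont_spec_1-dim} records that $R_+(u)$ \emph{does} have a simple pole at $u=1$. With the correct bound $c\le 1$, your argument as written would only yield meromorphy of $R_+$ on $\re u>1$.

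What rescues the statement --- and what your proposal is missing --- is the dichotomy the paper draws between the two ways $N_\pm$ can meet a line $K_\gamma(c)$. On $K_{\alpha_1}(c)$ and $K_{\alpha_1+2\alpha_2}(c)$ the zero set of $D_+(u,\cdot)$ is the \emph{entire} line at the single value $u=c$, independently of the continuous parameter $t$; one can factor out $(c^2-u^2)$ and the remaining operator is bounded, so the resolvent has a genuine simple pole at $u=c\le 1$ and is meromorphic across it. On $K_{\alpha_2}(c)$ and $K_{\alpha_1+\alpha_2}(c)$, by contrast, the zeros sit at $u=\tfrac c2\pm it$ and sweep out the whole vertical line $\re u=\tfrac c2$ as $t$ varies; these are not poles with finite principal part but an impassable line of singularities, and since $c\le 1$ the worst case gives the boundary $\re u=\tfrac12$. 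The roles of the two families of lines are exchanged for $D_-$, which is why $R_-$ stops at $\re v=1$ (the lines $v=c+it$ swept out on $K_{\alpha_1}(c)$, $K_{\alpha_1+2\alpha_2}(c)$) while being holomorphic there (its would-be poles $v=c\le1$ from the other two families lie outside the half-plane). Your criterion ``singularities are the points where $D_+(u,\Lambda)$ vanishes for some $\Lambda$ in the support, with finite principal part'' glosses over exactly this distinction, and your exceptional set $\Sigma$ is internally inconsistent with the preceding claim that no such zeros exist in $\re u>\tfrac12$.
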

The operators $D_+(u)$ and $D_-(v)$ are constructed such that their resolvents applied to the Poincar\'e series give the meromorphic continuation of the latter in the 
direction of $u$ and of $v$,
respectively. This involves a number of vast computations solved with the  computer algebra system Magma. By this and some simple consequences of the  theory of 
Eisenstein series we get analytic continuation to the point of holomorphicity (Theorem~\ref{thm_continuation_kappa_2m}) in case
of scalar weight $\kappa=4$.
\begin{thm}
  Let $m=2$ and $\kappa=4$. 
The $L^2$-limit
\begin{equation*}
 \mathcal P_\tau(\cdot,0,0)\:=\:\lim_{s_2\to 0} \mathcal P_\tau(\cdot,0,s_2)
\end{equation*}
exists in $L^2(\Gamma\backslash \Sp_2(\RR))$. It has got a holomorphic $C^\infty$-representative.
\end{thm}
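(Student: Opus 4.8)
The plan is to represent $\mathcal P_\tau(\cdot,0,s_2)$ on the slice $s_1=0$ as the resolvent $R_+$ applied to a family that is already regular at $s_2=0$, and then to read off both the existence of the limit and its regularity from that representation. First the reduction. Set $s_1=0$. For $m=2$, $\kappa=4$ both inequalities of the convergence theorem collapse to $\re s_2>0$, so $\mathcal P_\tau(\cdot,0,s_2)\in L^2(\Gamma\backslash\Sp_2(\RR))$ for $\re s_2>0$, with $s_2=0$ on the boundary of that region. Under $s_2=\tfrac12(u-(\kappa-m))$ the point $s_2=0$ corresponds to $u=2$, and $u(s_2)=2s_2+2$ stays in the half-plane $\re u>\tfrac12$ for $s_2$ near $0$, where $R_+(u)$ is meromorphic. (The companion resolvent $R_-(v)$ controls the transverse $s_1$-direction and is not needed here, since $\{s_1=0,\ \re s_2>0\}$ already lies in the region of absolute convergence.) It therefore suffices to (i) continue $s_2\mapsto\mathcal P_\tau(\cdot,0,s_2)$ meromorphically across $0$; (ii) show that continuation has no pole at $s_2=0$, so that the $L^2$-limit exists; and (iii) produce a smooth, holomorphic representative of the limit.

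For (i) I would invoke the operator identity coming from the Magma computation that underlies the construction of $D_+$: with $u=u(s_2)=2s_2+2$,
$$D_+(u(s_2))\,\mathcal P_\tau(\cdot,0,s_2)\;=\;\Psi(\cdot,s_2),$$
where termwise differentiation is justified by the locally uniform convergence of the defining series together with all its derivatives, and where $\Psi(\cdot,s_2)$ is a finite linear combination of exponential-type Poincar\'e series whose shifted parameters, by construction, satisfy the convergence inequalities on a full neighbourhood of $s_2=0$; hence $\Psi(\cdot,s_2)$ is $L^2$-valued and holomorphic in $s_2$ there. Since $\Psi(\cdot,s_2)\in L^2$, the function $\mathcal P_\tau(\cdot,0,s_2)$ lies in the domain of $D_+(u(s_2))$, and for $\re s_2>0$ with $u(s_2)$ off the discrete pole set of $R_+$ we may invert:
$$\mathcal P_\tau(\cdot,0,s_2)\;=\;R_+(u(s_2))\,\Psi(\cdot,s_2).$$
Because $u(s_2)$ remains in $\re u>\tfrac12$ and $R_+$ is meromorphic there while $\Psi$ is holomorphic, the right-hand side defines a meromorphic continuation of $\mathcal P_\tau(\cdot,0,\cdot)$ across $s_2=0$ --- this is the $\Sp_2(\RR)$-analogue of the continuation of weight-$0$ Poincar\'e series by the resolvent of the Laplacian recalled in the Appendix.

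Step (ii) is the crux, and the step I expect to demand the most care. I must show that $u\mapsto R_+(u)\Psi(\cdot,0)$ has no pole at $u=2$. By Langlands' spectral decomposition of $L^2(\Gamma\backslash\Sp_2(\RR))$ and the analysis of the resolvents (Propositions~\ref{prop_discrete_spec}, \ref{prop_cont_spec_2-dim}, \ref{prop_cont_spec_1-dim}), the poles of $R_+$ in $\re u>\tfrac12$ occur only at values $u=\check{\alpha}(\Lambda)$ ($\alpha$ a long root) with $\Lambda$ running over the infinitesimal characters of the discrete (cuspidal and residual) spectrum, together with the finitely many poles of the $1$- and $2$-parameter families of Eisenstein series in that strip. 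I would then (a) check from the explicit list of these spectral parameters that $u=2$ is either not attained at all, or is attained only on finite-dimensional pieces --- the constants and the Klingen/Saito--Kurokawa-type residues; and (b) in the latter case show, by unfolding, that $\Psi(\cdot,0)$ is orthogonal to the corresponding eigenspace: $\Psi(\cdot,0)$ is built from Poincar\'e series attached to a non-degenerate $\tau\neq 0$, so its constant term vanishes and its pairing against those residual forms vanishes. This is precisely where ``the simple consequences of the theory of Eisenstein series'' enter, and it rests on having the discrete and residual spectrum of $\Sp_2(\RR)$ in sufficiently explicit form. Granting it, $R_+(u)\Psi(\cdot,0)$ is holomorphic at $u=2$ and
$$\mathcal P_\tau(\cdot,0,0)\;:=\;\lim_{s_2\to 0}\mathcal P_\tau(\cdot,0,s_2)\;=\;R_+(2)\,\Psi(\cdot,0)$$
exists in $L^2(\Gamma\backslash\Sp_2(\RR))$.

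For (iii): each $\mathcal P_\tau(\cdot,0,s_2)$ transforms on the right under $K$ by the scalar weight-$\kappa$ character, hence so does the limit, which is therefore $K$-finite. Applying a set of generators of $Z(\gg)$ (and $D_+$ itself) to the kernel $f_\tau(g,0,s_2)=e^{2\pi i\trace(\tau z)}\det(\im z)^{s_2}j(g,i)^{-\kappa}$ produces, for $\re s_2>0$, equations of the shape $z\cdot\mathcal P_\tau(\cdot,0,s_2)=\chi_\kappa(z)\,\mathcal P_\tau(\cdot,0,s_2)+s_2\cdot(\text{a Poincar\'e combination regular near }s_2=0)$; since $R_+$ maps $L^2$ continuously into the Sobolev space $H^{\deg D_+}$, and the resolvent representation of step~(ii) lets one bootstrap the elliptic equation $D_+(2)\mathcal P_\tau(\cdot,0,0)=\Psi(\cdot,0)$ together with $K$-finiteness, the convergence $\mathcal P_\tau(\cdot,0,s_2)\to\mathcal P_\tau(\cdot,0,0)$ holds in $C^\infty$ on compacta, and the limit is a $C^\infty$ (indeed real-analytic) function that is $Z(\gg)$-finite with infinitesimal character $\chi_\kappa$. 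The same kernel computation gives $X^-f_\tau(g,0,s_2)=s_2\cdot(\dots)$ for every lowering operator $X^-\in\pp^-$, whence $X^-\mathcal P_\tau(\cdot,0,s_2)\to 0$ in $C^\infty$ on compacta, so $\pp^-$ annihilates $\mathcal P_\tau(\cdot,0,0)$. A smooth, $K$-finite function of scalar weight $\kappa$ killed by $\pp^-$ corresponds, via $F(g)=j(g,i)^{-\kappa}\mathcal F(g\Hop i)$, to a holomorphic function $\mathcal F$ on the Siegel upper half space transforming as a modular form of weight $(\kappa,\kappa)=(4,4)$; this $\mathcal F$ furnishes the asserted holomorphic $C^\infty$-representative, completing the proof modulo the pole analysis of step~(ii).
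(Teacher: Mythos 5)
Your overall architecture --- continue along $s_1=0$ by writing $\mathcal P_\tau(\cdot,0,s_2)=R_+(u(s_2))\Psi(\cdot,s_2)$ with $\Psi:=D_+(u)\mathcal P_\tau$, then use ellipticity and lowering operators --- matches the paper's strategy in outline, but step~(ii), which you rightly call the crux, does not work as designed. On the slice $s_1=0$ the explicit formula (\ref{equ_dplus_kappa_2m}) collapses to
\begin{equation*}
\Psi(\cdot,u)\;=\;D_+(u)P(\cdot,u,2u+1)\;=\;64\pi^2\det(\tau)\,(u-2)(u-3)\,P(\cdot,u+2,2u+5)\:,
\end{equation*}
so $\Psi(\cdot,s_2)$ vanishes at $s_2=0$. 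Your orthogonality condition (ii)(b) is then vacuous, and the formula you would deduce, $\mathcal P_\tau(\cdot,0,0)=R_+(2)\Psi(\cdot,0)$, gives the zero function --- which cannot be right, since the limit is the classical holomorphic Poincar\'e series and must pair nontrivially with weight-$4$ cusp forms for the Sturm formula of Section~\ref{holomorphe_projektion} to have content. The underlying error is the interchange $\lim_{s_2\to0}R_+(u(s_2))\Psi(\cdot,s_2)=R_+(2)\lim_{s_2\to0}\Psi(\cdot,s_2)$, which is exactly what fails at a pole of the resolvent. And $R_+(u)$ \emph{does} have a simple pole at $u=2$: on the discrete components with $D_+(2,\Lambda)=0$, in particular on the isotypic component of the holomorphic (limit of) discrete series $\Pi_{(2,3)}$ containing the $K$-type $(4,4)$, i.e.\ on weight-$4$ holomorphic cusp forms --- a component absent from your list in (ii)(a), and against which $\Psi(\cdot,s_2)$ is certainly not orthogonal for $s_2\neq0$. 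The limit arises from the cancellation of this simple pole against the simple zero of $\Psi$: it is essentially the residue of $R_+$ at $u=2$ applied to $\partial_u\Psi(\cdot,u)\vert_{u=2}$, a nonzero multiple of the projection of $P(\cdot,4,9)$ onto the pole components.

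The paper supplies precisely what your step~(ii) lacks: it first shows $\lim_{u\to2}\lVert D_+(2)^{n}P(\cdot,u,2u+1)\rVert=0$ for $n=2$, which kills the continuous spectrum (where $R_+(2)$ is bounded) and every discrete component with $D_+(2,\Lambda)\neq0$, so the $L^2$-limit exists and is supported on the parameters $\Lambda=(2,\Lambda_2)$; it then applies the second operator $D_-(2u+1)$ together with the $C_1$-recursions on the lines $s_1=0$ and $s_1=1$ to force $\Lambda_2^2\in\{9,65\}$ and rules out $65$, isolating $\Lambda=(2,3)$. You need an argument of this kind, or an explicit Laurent expansion of $R_+(u(s_2))\Psi(\cdot,s_2)$ at the pole, in place of (ii). Step~(iii) is also softer than it needs to be: the $C^\infty$-convergence on compacta used to push $\pp^-$ through the limit is asserted, not proved; once the limit is known to lie in the $\Lambda=(2,3)$ component, the identity $0=\langle\trace(E_+E_-)P,P\rangle=\sum_{i,j}\lVert(E_-)_{ij}P\rVert^2$ gives formal holomorphy directly, and elliptic regularity (after modifying the non-elliptic Casimir) yields the smooth representative, as in the paper.
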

Having continued the Poincar\'e series holomorphically, holomorphic projection is immediate (Theorem~\ref{satz_holomorphe_projektion}).
\begin{thm}
 The Sturm operator establishes the holomorphic projection in case of genus $m=2$ and scalar weight $\kappa\geq 4$.
\end{thm}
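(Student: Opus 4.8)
The strategy is to test an arbitrary automorphic form $F$ of weight $\kappa$, taken in the class of moderate growth for which its Petersson product with holomorphic cusp forms converges, against the Poincar\'e series $\mathcal P_\tau(\cdot,s_1,s_2)$, and to compare two evaluations of that product at the point of holomorphicity $(s_1,s_2)=(0,0)$. Two preliminary facts are needed for $m=2$, $\kappa\geq 4$. First, $\mathcal P_\tau(\cdot,0,0)$ is a holomorphic cusp form of weight $\kappa$: for $\kappa>2m$ this is the cusp-form property noted after the convergence theorem, and for $\kappa=4$ it follows from the theorem on the existence of the holomorphic $C^\infty$-representative together with the elementary fact that a holomorphic Siegel modular form lying in $L^2$ is automatically cuspidal. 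Second, as $\tau$ ranges over the positive definite half-integral symmetric $2\times 2$ matrices, the forms $\mathcal P_\tau(\cdot,0,0)$ span the space $S_\kappa(\Gamma)$ of holomorphic cusp forms: by the Petersson coefficient formula (a special case of the unfolding below, applied to a holomorphic $G$) the $\tau$-th Fourier coefficient of $G$ is a nonzero multiple of $\langle G,\mathcal P_\tau(\cdot,0,0)\rangle$, so $G\perp\mathcal P_\tau(\cdot,0,0)$ for all $\tau$ forces $G=0$.

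The heart of the argument is the unfolding, which produces Sturm's operator. In the domain of absolute convergence of the convergence theorem, the expansion of $\mathcal P_\tau$ over $\Gamma_\infty\backslash\Gamma$ unfolds the inner product to an integral over $\Gamma_\infty\backslash\mathbb H_2$; integrating out $x=\re z$ extracts the $\tau$-th Fourier coefficient $a_F(\tau,Y)$ of $F$ (where $Y=\im z$ and $F(x+iY)=\sum_S a_F(S,Y)e^{2\pi i\trace(Sx)}$), leaving
\begin{equation*}
 \langle F,\mathcal P_\tau(\cdot,s_1,s_2)\rangle \;=\; c\int_{Y>0} a_F(\tau,Y)\,e^{-2\pi\trace(\tau Y)}\,\trace(\tau Y)^{\overline{s_1}}\,\det(Y)^{\overline{s_2}+\kappa-3}\,dY\,,
\end{equation*}
with $m=2$, so the measure exponent is $\kappa-(m+1)=\kappa-3$. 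The right-hand side is exactly Sturm's integral operator applied to the $\tau$-th Fourier coefficient of $F$ (the promised averaging over imaginary parts), regularized by the auxiliary parameters $(s_1,s_2)$, and it depends holomorphically on them in the convergence range. Running the same computation with $F$ replaced by a holomorphic cusp form $G$, whose coefficient has the shape $a_G(\tau,Y)=c_G(\tau)e^{-2\pi\trace(\tau Y)}$ with $c_G(\tau)$ constant, the $Y$-integral becomes a Siegel Gamma integral and evaluates, at $(s_1,s_2)=(0,0)$, to a nonzero multiple of $c_G(\tau)$ (here $\kappa\geq 4$ guarantees non-vanishing); this is the Petersson coefficient formula invoked above.

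To conclude I would analytically continue the displayed identity to $(s_1,s_2)=(0,0)$. On the left the continuation is furnished by the meromorphic continuation of $\mathcal P_\tau$ constructed via the resolvents $R_+(u)$ and $R_-(v)$, with value $\langle F,\mathcal P_\tau(\cdot,0,0)\rangle$; since $\mathcal P_\tau(\cdot,0,0)$ is a holomorphic cusp form, the defining property of the holomorphic projection $F^{\mathrm{hol}}$ gives $\langle F,\mathcal P_\tau(\cdot,0,0)\rangle=\langle F^{\mathrm{hol}},\mathcal P_\tau(\cdot,0,0)\rangle$, which by the previous computation is a nonzero multiple of the $\tau$-th Fourier coefficient $c_{F^{\mathrm{hol}}}(\tau)$. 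Comparing with the continuation of the right-hand side yields, for every positive definite $\tau$,
\begin{equation*}
 c_{F^{\mathrm{hol}}}(\tau)\;=\;(\text{constant})\cdot\Bigl(\text{Sturm's operator applied to the }\tau\text{-th Fourier coefficient of }F\Bigr)\,,
\end{equation*}
and since by the spanning statement the coefficients $c_{F^{\mathrm{hol}}}(\tau)$ determine $F^{\mathrm{hol}}\in S_\kappa(\Gamma)$, this is the assertion that Sturm's operator computes the holomorphic projection. I expect the main obstacle to be exactly this last continuation: the $Y$-integral above converges only in the same half-space as the Poincar\'e series, and one must show that it extends to $(s_1,s_2)=(0,0)$ with boundary value the (regularized) Sturm integral --- equivalently, that Sturm's integral already converges for the borderline weight $\kappa=4$ on the class of $F$ in question. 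For $\kappa\geq 5$ one is strictly inside the convergence range and this is immediate, as in Panchishkin's treatment; for $\kappa=4$ it rests on the growth bounds for the Fourier coefficients $a_F(\tau,Y)$ in $Y$ together with the spectral localization underlying the $L^2$-continuation of $\mathcal P_\tau$, that is, on the bulk of the machinery developed earlier in the paper.
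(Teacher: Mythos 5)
Your proposal is correct and follows essentially the same route as the paper: unfold $\langle F, p_\tau(\cdot,\bar s)\rangle$ to the Sturm integral, pass to $s=0$ using the holomorphic continuation of the Poincar\'e series and the fact that $p_\tau(\cdot,0)$ is a holomorphic cusp form, and identify the Fourier coefficients of the Riesz representative (your $F^{\mathrm{hol}}$, the paper's $\tilde F$) by running the same unfolding against a holomorphic cusp form. The only remark worth adding is that the convergence of the Sturm integral at the boundary point $s=0$ for $\kappa=4$, which you flag as the main remaining obstacle, is in the paper simply built into the bounded-growth hypothesis imposed on $F$, so no further spectral input is needed at that step.
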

Concerning generalizations, the case of arbitrary symplectic groups seems out of reach at the moment, as the number of necessary calculations grows exponentially.
But the shapes of $D_+(u,\Lambda)$ and $D_-(v,\Lambda)$ lead to the suspection that  some generalization for special orthogonal groups $\mathop{SO}(2,2n+1)$ should be possible.
This is upcoming work.

\section{Casimir elements}\label{section_Casimir}
Let $G=\Sp_m(\RR)$ be the symplectic group of genus $m$. 
Later on  
we will restrict to the case  $m=2$. 
We realize  $G$ as the group of those $g\in M_{m,m}(\RR)$ satisfying $g'Wg=W$ for 
\begin{equation*}
 W=\begin{pmatrix}0&-E_m\\E_m&0\end{pmatrix}.
\end{equation*}
We have the usual action of $G$ on the Siegel halfspace $\H$, for $g=\begin{pmatrix}a&b\\c&d\end{pmatrix}\in G$,
\begin{equation*}
 g\Hop z=(az+b)(cz+d)^{-1} \:.
\end{equation*}
Let $K$ be the stabilizer of $i=iE_m\in\H$, thus  $K$ is a maximal compact subgroup of 
$G$. We denote by
\begin{displaymath}
 g\:\mapsto \:g\Hop i=:z=x+iy
\end{displaymath}
the obvious isomorphism of $G/K$ to $\H$.
Let $\mathcal F$ be the Siegel fundamental domain for the action of $\Sp_m(\ZZ)$ on $\H$.
We define the function $J:G\times\H\to\GL_2(\CC)$, $J(g,z)=cz+d$, and the factor of automorphy 
$ j(g,z)= \det (J(g,z))$.
The  constraint of $J:=J(\cdot,i)$ to $K$ defines the isomorphism of $K$ to the unitary group $U_m$.
By Iwasawa decomposition, every element $g\in G$ can be written as $g=pk$, where $k\in K$ and $p$ is parabolic,
\begin{equation*}
 p\:=\:\begin{pmatrix}
   T&U\\0&T'^{-1}
  \end{pmatrix}.
\end{equation*}
Here $T$ has lower triangular shape. 
Choosing all its diagonal elements $t_1,\dots,t_m$  to be positive, $T$ is uniquely determined by $g$, and $z=UT'+iTT'$.
Let $\mathfrak g$ be the Lie algebra of $G$.
We have the matrix realization of $\mathfrak g_\CC\subset M_{2m,2m}(\mathbb C)$ consisting of those $g$ satisfying
$g'W+Wg=0$.
Then $\mathfrak g_\CC=\mathfrak p_+\oplus\mathfrak p_-\oplus\mathfrak k_\CC$,
where $\mathfrak k_\CC$ is the Lie algebra of $K$ given by the matrices satisfying
\begin{equation*}
 \begin{pmatrix}A&S\\-S&A\end{pmatrix}\:, \quad A'=-A\:, \quad S'=S\:,
\end{equation*}
and
\begin{equation*}
 \mathfrak p_\pm=\left\{\begin{pmatrix}X&\pm iX\\\pm iX&-X\end{pmatrix},\quad X'=X\right\}.
\end{equation*}
Let $e_{kl}\in M_{m,m}(\mathbb C)$ be the elementary matrix having entries $(e_{kl})_{ij}=\delta_{ik}\delta_{jl}$ 
and let $X^{(kl)}=\frac{1}{2}(e_{kl}+e_{lk})$.
 The elements $(E_\pm)_{kl}=(E_\pm)_{lk}$ of $\mathfrak p_\pm$ are defined to be those corresponding to $X=X^{(kl)}$, 
$1\leq k,l\leq m$. 
Then $(E_\pm)_{kl}$, $1\leq k\leq l\leq m$ form a basis of $\mathfrak p^\pm$.
A basis of $\mathfrak k_\CC$  is given by $B_{kl}$, for $1\leq k,l\leq m$, where  $B_{kl}$ corresponds 
to $A_{kl}=\frac{1}{2}(e_{kl}-e_{lk})$ and $S_{kl}=\frac{i}{2}(e_{kl}+e_{lk})$.
For abbreviation, let $E_\pm$ be the matrix having entries $(E_\pm)_{kl}$. 
Similarly, let $B=(B_{kl})_{kl}$ be the matrix with entries $B_{kl}$ and let $B^\ast$ be its transpose 
having entries $B_{kl}^\ast=B_{lk}$.
Thus, $E_+$, $E_-$, $B$ and $B^\ast$ are matrix valued matrices. 
Taking formal traces of them and their formal products, e.g. $\trace(E_+E_-)$, such traces are not invariant under cyclic permutations of their arguments.
%
For $\gg=\mathfrak{sp}_m(\RR)$ the center $\zz_\CC$ of the universal enveloping Lie algebra $\mathfrak U(\mathfrak g_\CC)$
 is generated by $m$ elements.
For any basis $\{X_i\}$  of  $\mathfrak g_\CC$  let $\{X_i^\ast\}$ 
be its dual with respect to the nondegenerate bilinear form $\mathcal B$  on $\mathfrak g_\CC$,
\begin{equation}\label{def_bilinearform}
 \mathcal B(g,h)\:=\:\frac{1}{2}\trace(g\cdot h)\:.
\end{equation}
The Killing form is  given by $4(m+1)\mathcal B$. Then the elements
\begin{equation*}
 D_r\:=\:\sum_{i_1,\dots,i_r}\trace(X_{i_1}\cdots X_{i_r})X_{i_1}^\ast\dots X_{i_r}^\ast
\end{equation*}
are easily seen to belong to the center  of the universal enveloping algebra  and are 
independent of the chosen basis. Here $\trace(X_{i_1}\cdots X_{i_r})$ denotes the trace of the matrix product $X_{i_1}\cdots X_{i_r}$.
With respect to $\mathcal B$, we get the  dual basis
$(E_\pm)_{kl}^\ast= \frac{1}{1+\delta_{kl}}(E_\mp)_{kl}$ as well as
$B_{kl}^\ast=B_{lk}$ for all $k,l$.
\begin{prop}\label{Prop_Casimir_2}\cite{CasimirPaper},\cite{weissauersLN}
In terms of the basis above,
\begin{equation*}
 D_2=\trace(E_+E_-)+\trace(E_-E_+)+\trace(BB)+\trace(B^\ast B^\ast)\:,
\end{equation*}
\begin{eqnarray*}
 D_4 &=& \trace(E_+E_-E_+E_-)+\trace(E_-E_+E_-E_+)+\trace(BBBB)+\trace(B^\ast B^\ast B^\ast B^\ast)\\
&& +\sum_{\zeta\in Z_4}\bigl(\trace(\zeta(E_+E_-B^\ast B))+\trace(\zeta(E_-E_+B B^\ast))+\trace(\zeta(E_+B E_-B^\ast))\bigr),
\end{eqnarray*}
where $Z_4$ is the group of cyclic permutations of four elements.
\end{prop}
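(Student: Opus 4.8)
\emph{Proof idea.}
The plan is to compute both Casimir elements directly from the defining formula $D_r=\sum\trace(X_{i_1}\cdots X_{i_r})X_{i_1}^\ast\cdots X_{i_r}^\ast$, first cutting down the range of summation by a dual-basis reconstruction step and then by splitting the remaining indices over the three pieces of $\mathfrak g_\CC=\mathfrak p_+\oplus\mathfrak p_-\oplus\mathfrak k_\CC$. The one structural input is the reconstruction identity for the form $\mathcal B$: for every $Y\in\mathfrak g_\CC$ one has $\sum_j\trace(YX_j)X_j^\ast=2Y$, and for an arbitrary $2m\times 2m$ matrix $M$ one has $\sum_j\trace(MX_j)X_j^\ast=M-W^{-1}M'W$, the right-hand side being twice the $\mathcal B$-orthogonal projection of $M$ onto $\mathfrak g_\CC$ (here $\mathfrak g_\CC$ is the $(-1)$-eigenspace of the $\mathcal B$-selfadjoint map $g\mapsto W^{-1}g'W$). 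Combined with the explicit dual basis $(E_\pm)_{kl}^\ast=\frac{1}{1+\delta_{kl}}(E_\mp)_{kl}$, $B_{kl}^\ast=B_{lk}$ and the self-duality of the matrices $X^{(kl)}$ under the trace pairing, these turn the sums into the matrix-valued traces of the statement.

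For $D_2$ the reconstruction identity applied to the inner index gives at once $D_2=2\sum_i X_i^\ast X_i$. Splitting over $\mathfrak p_+$, $\mathfrak p_-$, $\mathfrak k_\CC$ and inserting the dual basis, the $\mathfrak p_+$-part is $2\sum_{k\le l}\frac{1}{1+\delta_{kl}}(E_-)_{kl}(E_+)_{kl}$; the restriction $k\le l$ together with the weights $\frac{1}{1+\delta_{kl}}$ exactly reproduces the full matrix trace, so this equals $\trace(E_-E_+)$, and likewise the $\mathfrak p_-$-part equals $\trace(E_+E_-)$. The $\mathfrak k$-part is $2\sum_{k,l}B_{lk}B_{kl}=2\trace(BB)$, which using $\sum_{k,l}[B_{kl},B_{lk}]=0$ I rewrite symmetrically as $\trace(BB)+\trace(B^\ast B^\ast)$. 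Adding the three contributions gives the claimed formula.

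The substance is $D_4$. Here the factor $\trace(X_{i_1}X_{i_2}X_{i_3}X_{i_4})$ is a trace in the $2m$-dimensional standard representation $V=V^+\oplus V^-$, where $V^\pm$ are the two $m$-dimensional weight spaces for the action of the centre of $\mathfrak k$; on this grading $\mathfrak p_+$ raises, $\mathfrak p_-$ lowers and $\mathfrak k_\CC$ preserves, and moreover each of $\mathfrak p_\pm$ squares to zero as matrices (image in $V^\pm$, kernel containing $V^\pm$). Hence the trace factor vanishes unless the number of $\mathfrak p_+$-indices equals the number of $\mathfrak p_-$-indices \emph{and} no two cyclically adjacent indices lie in the same $\mathfrak p_\pm$. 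Only three families of patterns survive: all four indices in $\mathfrak k_\CC$; one in $\mathfrak p_+$, one in $\mathfrak p_-$, two in $\mathfrak k_\CC$; and the strictly alternating pattern with two $\mathfrak p_+$ and two $\mathfrak p_-$. For each surviving arrangement I substitute the block forms of $E_\pm$ and of the $\mathfrak k$-matrices, evaluate the $2m$-trace in terms of the $m\times m$ blocks $X^{(kl)}$, $A_{kl}$, $S_{kl}$, and collapse the four free matrix indices via the completeness relations for those bases. The all-$\mathfrak k$ family yields $\trace(BBBB)$ (symmetrized with $\trace(B^\ast B^\ast B^\ast B^\ast)$ as for $D_2$), the alternating family yields $\trace(E_+E_-E_+E_-)$ and $\trace(E_-E_+E_-E_+)$, and the mixed family, whose arrangements fall into exactly three cyclic orbits, yields the three sums over $\zeta\in Z_4$.

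I expect the main obstacle to be the non-commutativity of $\mathfrak U(\mathfrak g_\CC)$: while the trace factor is cyclically symmetric, the product $X_{i_1}^\ast\cdots X_{i_4}^\ast$ is not, so after collapsing indices the expression does not land directly on the symmetric normal form of the statement but carries lower-degree corrections coming from the structure constants of $\mathfrak{sp}_m$. Showing that these corrections cancel, or reassemble into the $Z_4$-cyclic structure rather than contributing spurious terms of degree $\le 3$, is precisely the laborious step, and is the reason the verification is carried out with computer algebra. Three consistency checks keep it under control: the result must be invariant under the Cartan involution $E_+\leftrightarrow E_-$, $B\leftrightarrow B^\ast$ (the stated formula manifestly is), it must restrict correctly along the inclusions of lower-genus symplectic subalgebras, in particular to the classical $\mathfrak{sp}_1=\mathfrak{sl}_2$ Casimir, and it must land in the centre of $\mathfrak U(\mathfrak g_\CC)$; together these pin down all coefficients.
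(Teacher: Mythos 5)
The paper does not prove Proposition~\ref{Prop_Casimir_2}; it is quoted from \cite{CasimirPaper} and \cite{weissauersLN}, so there is no in-paper argument to compare against. Your strategy is the natural direct computation and, as far as I can check it, it is sound: the reconstruction identity $\sum_j\trace(YX_j)X_j^\ast=2Y$ is correct for the form $\mathcal B=\frac12\trace$, your $D_2$ derivation (including the identification of $2\sum_{k\le l}\frac{1}{1+\delta_{kl}}(E_-)_{kl}(E_+)_{kl}$ with $\trace(E_-E_+)$ via the symmetry of $E_\pm$, and the use of $\sum_{k,l}[B_{kl},B_{lk}]=0$) is correct, and the selection rules for $D_4$ are genuine: every product of two elements of $\mathfrak p_+$ (or of $\mathfrak p_-$) vanishes as a $2m\times 2m$ matrix, and the $V^+\oplus V^-$ grading forces equal numbers of $+$ and $-$ factors, so only the three families you name survive, with the mixed family splitting into exactly three cyclic necklaces matching the three $Z_4$-orbits of the statement.

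Two remarks on your assessment of the difficulty. First, the obstacle you flag --- lower-degree commutator corrections from reordering in $\U(\gg_\CC)$ --- does not actually arise: in $D_4=\sum\trace(X_{i_1}\cdots X_{i_4})X_{i_1}^\ast\cdots X_{i_4}^\ast$ each type-ordering keeps its monomial in its original order, and the target formula is itself a sum of \emph{ordered} formal traces (the paper explicitly warns just before the Proposition that these traces are not invariant under cyclic permutation of their arguments). So no symmetrization or reordering is needed; the genuinely laborious step is only the Kronecker-delta bookkeeping that matches each of the twelve mixed orderings to the correct term of the three $Z_4$-sums, where the placement of $B$ versus $B^\ast$ is dictated by which index of $B_{kl}$ is contracted by the trace factor and which survives into the dual monomial $B_{kl}^\ast=B_{lk}$. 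This is exactly where convention errors (e.g.\ whether $B_{kl}$ acts as $e_{kl}$ or $e_{lk}$ on $V^\pm$) creep in, and it is the part you would have to write out in full. Second, your closing consistency checks are useful sanity tests but do not ``pin down all coefficients'': centrality and independence of the basis are automatic from the construction of $D_r$, and invariance under the Cartan involution is a symmetry constraint, not a determination of the twelve mixed terms; only the explicit contraction computation decides which six of the possible necklaces of $\{E_+,E_-,B,B^\ast\}$ actually occur.
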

For applications, we  get the following reformulations:
\begin{cor}\label{cor_casimir_operatoren}
Let $C_1:=\frac{1}{2}D_2$ and $C_2:=\frac{1}{2}D_4$. Then
\begin{equation*}
 C_1=\frac{1}{2}(\trace(E_+E_-)+\trace(E_-E_+))+\trace(BB)\:,
\end{equation*}
\begin{eqnarray*}
 C_2&=& \frac{1}{2}\bigl(\trace(E_+E_-E_+E_-)+\trace(E_-E_+E_-E_+)+\trace(B^4)+\trace((B^\ast)^4)\bigr)\\
&&+2\bigl(\trace(E_+E_-B^\ast B^\ast)+\trace(E_-E_+B B)\bigr)\\
&&-\sum_{i,j,k,l}\{(E_+)_{kl},(E_-)_{ij}\}B_{kj}B_{li}\\
&&+\frac{(m+1)^2}{2}(\trace(E_+E_-)+\trace(E_-E_+))\:.
\end{eqnarray*}
where 
\begin{equation*}
 \frac{1}{2}(\trace(E_+E_-)+\trace(E_-E_+))=\trace(E_+E_-)+(m+1)\trace(B)\:,
\end{equation*}
\begin{eqnarray*}
&&\hspace*{-10mm}\frac{1}{2}(\trace(E_+E_-E_+E_-)+\trace(E_-E_+E_-E_+))\\ &&=\trace(E_+E_-E_+E_-)+\frac{1}{2}(\trace(E_+E_-)
+\trace(E_-E_+))\trace(B)\\
&&\quad+\frac{m+2}{2}\bigl(\trace(E_+E_-B^\ast)+\trace(E_-E_+B)\bigr)\:.
\end{eqnarray*}
For $m=2$, the Casimir elements $C_1, C_2$ generate the center $\mathfrak z_\CC$ of the universal enveloping algebra.
\end{cor}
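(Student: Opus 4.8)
The proof is a direct computation inside the universal enveloping algebra $\mathfrak U(\mathfrak g_\CC)$: one reduces the cyclically symmetrized expressions of Proposition~\ref{Prop_Casimir_2} to the asserted normal-ordered forms using the structure constants of $\mathfrak g_\CC$, and then settles the generation claim by a symbol argument. So the first step is to record the commutators of the basis elements. From the matrix realizations of $\mathfrak p_\pm$ and $\mathfrak k_\CC$ one reads off $[\mathfrak p_+,\mathfrak p_+]=[\mathfrak p_-,\mathfrak p_-]=0$, that $[(E_+)_{kl},(E_-)_{ij}]$ lies in $\mathfrak k_\CC$ and is an explicit linear combination of the $B_{rs}$, and that $[B_{kl},(E_\pm)_{ij}]\in\mathfrak p_\pm$ is a combination of the $(E_\pm)_{rs}$. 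These relations follow by multiplying the defining $2m\times 2m$ matrices and expanding in the given bases; the ubiquitous factor $m+1$ enters precisely here, as the trace appearing when a summed commutator $\sum_{k,l}[(E_+)_{kl},(E_-)_{lk}]$ is collapsed to a multiple of $\trace(B)$.

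For $C_1=\frac12 D_2$ the only point is that $\trace(BB)$ and $\trace(B^\ast B^\ast)$ coincide in $\mathfrak U(\mathfrak g_\CC)$: their difference is $\sum_{k,l}[B_{kl},B_{lk}]$, which vanishes because the summand is antisymmetric under $k\leftrightarrow l$. Hence $\frac12(\trace(BB)+\trace(B^\ast B^\ast))=\trace(BB)$, and halving $D_2$ gives the stated formula. The first auxiliary identity is the same computation one degree lower: $\trace(E_+E_-)-\trace(E_-E_+)=\sum_{k,l}[(E_+)_{kl},(E_-)_{lk}]$ is exactly the collapsing commutator of the previous step, equal to $-2(m+1)\trace(B)$ up to the sign fixed by the conventions, and this rearranges into $\frac12(\trace(E_+E_-)+\trace(E_-E_+))=\trace(E_+E_-)+(m+1)\trace(B)$.

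The substantial part is $C_2=\frac12 D_4$. The four pure traces $\trace(E_+E_-E_+E_-)$, $\trace(E_-E_+E_-E_+)$, $\trace(B^4)$, $\trace((B^\ast)^4)$ are simply carried over with the factor $\frac12$, so they form the first line verbatim and require no simplification. All the reorganization happens in the mixed-trace sum $\sum_{\zeta\in Z_4}\bigl(\trace(\zeta(E_+E_-B^\ast B))+\trace(\zeta(E_-E_+BB^\ast))+\trace(\zeta(E_+BE_-B^\ast))\bigr)$. For each cyclic summand I would move a factor $B$ or $B^\ast$ past an adjacent $E_\pm$ using the step-one relations; this produces (i) the doubled terms $2(\trace(E_+E_-B^\ast B^\ast)+\trace(E_-E_+BB))$, (ii) the anticommutator contribution $-\sum_{i,j,k,l}\{(E_+)_{kl},(E_-)_{ij}\}B_{kj}B_{li}$, and (iii) purely quadratic remainders coming from double commutators, which accumulate into $\frac{(m+1)^2}{2}(\trace(E_+E_-)+\trace(E_-E_+))$. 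The main obstacle is exactly this bookkeeping: correctly tracking all four index contractions through the twelve cyclic summands and verifying that the dimension factors combine into $(m+1)^2$ and that the rational coefficients come out as stated. I expect to confirm these cancellations with a computer algebra system. The second auxiliary identity is the corresponding degree-four collapse of the symmetric $E$-combination and is proven in the same fashion, the extra $\frac{m+2}{2}$ term arising from the same commutator relations.

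Finally, for $m=2$ I would argue on the level of symbols. Each $D_r$ is central, and its principal symbol in $S(\mathfrak g_\CC)^G$ is the degree-$r$ trace invariant $\mathbf X\mapsto\trace(\mathbf X^r)$, since $\trace((\sum_i x_iX_i)^r)=\sum\trace(X_{i_1}\cdots X_{i_r})x_{i_1}\cdots x_{i_r}$. Under the Chevalley isomorphism $S(\mathfrak g_\CC)^G\cong S(\aaa_\CC)^W$ this restricts to the power sum $\mathbf X\mapsto\sum_i\lambda_i^r$ of the standard representation, which is nonzero exactly for even $r$. For $m=2$ the Weyl group is of type $C_2$, whose invariant ring is freely generated in degrees $2$ and $4$; the symbols of $C_1$ and $C_2$ are nonzero multiples of these two fundamental invariants, hence algebraically independent and generating. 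Since the symbol map identifies the associated graded of $\zz_\CC$ with $S(\aaa_\CC)^W$, the two central elements $C_1,C_2$ of degrees $2$ and $4$ generate $\zz_\CC$.
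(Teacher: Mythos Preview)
Your rearrangement of the trace formulas for $C_1$, $C_2$ and the two auxiliary identities is exactly what the paper does: it just says ``the formulae follow by rearranging those of Proposition~\ref{Prop_Casimir_2}'', and you have spelled out how that rearranging actually proceeds (commutators of $E_\pm$ and $B$, the antisymmetry $\sum_{k,l}[B_{kl},B_{lk}]=0$, the collapse $\sum_{k,l}[(E_+)_{kl},(E_-)_{lk}]$ producing the $(m+1)\trace(B)$ term, and the corresponding bookkeeping for the twelve cyclic degree-four summands). So on the formula side your approach and the paper's coincide.

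Where you genuinely diverge is the generation claim for $m=2$. You argue on the level of principal symbols: the symbol of $D_r$ is the trace invariant $\mathbf X\mapsto\trace(\mathbf X^r)$, which under Chevalley restriction becomes $2\sum_i\lambda_i^r$ for even $r$; since $S(\aaa_\CC)^W$ for type $C_2$ is a polynomial ring in degrees $2$ and $4$, the symbols of $C_1,C_2$ are algebraically independent generators, and a standard associated-graded lift gives that $C_1,C_2$ generate $\zz_\CC$. The paper instead postpones the claim to Proposition~\ref{prop_harish-chandra}, where the Harish-Chandra images are computed explicitly as $\gamma(C_1)=B_{11}^2+B_{22}^2-5$ and $\gamma(C_2)=B_{11}^4+B_{22}^4-17+3\gamma(C_1)$, and generation is read off from the fact that $\gamma(C_2)$ is not a polynomial in $\gamma(C_1)$. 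Your argument is cleaner and basis-free; the paper's has the advantage that the explicit images $\gamma(C_j)$ are needed anyway to evaluate the Casimirs on infinitesimal characters (Corollary~\ref{harish-chandra}) and to build the operators $D_\pm$ in~(\ref{Dplus_in_koordinaten})--(\ref{Dminus_in_koordinaten}), so no extra work is incurred.
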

\begin{proof}[Proof of Corollary~\ref{cor_casimir_operatoren}]
The formulae for the traces are obtained by rearranging. Similarly the formulae 
for $C_1,C_2$ follow  by rearranging those of Proposition~\ref{Prop_Casimir_2}. For $m=2$, we will see in  Prop.~\ref{prop_harish-chandra},
that $\mathfrak z_\CC$ is generated by $C_1,C_2$.
\end{proof}
\subsection{Harish-Chandra homomorphism}\label{section_harisch-chandra}
The following wellknown results on the Harish-Chandra homomorphism and $K$-types  are included  in order
to fix the precise values for Casimir operators needed later on, which indeed depend on the choices.
The Harish-Chandra homomorphism is  described as follows (see for example \cite[IV.~7]{KnappVogan}). 
Take any Cartan subalgebra $\hh_\CC$ of $\gg_\CC$ and fix a system of positive roots $\Delta^+$ of $\gg_\CC$ for 
$\hh_\CC$ and let $\delta$ be half the sum of positive roots.
Define $\P=\sum_{\gamma\in\Delta^+}\U(\gg_\CC)\gg_{\gamma}$ and $\N=\sum_{\gamma\in\Delta^+}\gg_{-\gamma}\U(\gg_\CC)$.
Then we have
\begin{equation*}
 \U(\gg_\CC)\:=\: \U(\hh_\CC)\oplus (\P+\N)\:.
\end{equation*}
Let $p_+:\U(\gg_\CC)\to\U(\hh_\CC)$ be the projection with respect to this decomposition.
Let $\tau_+:\U(\hh_\CC)\to\U(\hh_\CC)$ be given on $\hh_\CC$ by 
\begin{equation*}
 \tau_+(h)\:=\:h-\delta(h)
\end{equation*}
and on $\U(\hh_\CC)$ by algebraic continuation.
Then the Harish-Chandra homomorphism
 \begin{equation*}
 \gamma\:: \:\U(\gg_\CC)\:\longrightarrow\:\U(\hh_\CC)
 \end{equation*}
is  $\tau_+\circ p_+$. Restricted to $\zz_\CC$ this is an isomorphism
\begin{equation*}
 \gamma\::\:\zz_\CC\:\tilde\longrightarrow\: \U(\hh_\CC)^W,
\end{equation*}
which is independent from the chosen positive system $\Delta^+$. Here $W$ denotes the Weyl group $W(\gg_\CC,\hh_\CC)$.
For explicit formulae in case of genus two, we choose
\begin{equation*}
 \hh_\CC \:=\:\CC B_{11}+\CC B_{22} \:\subset\: \kk_\CC\:,
\end{equation*}
which is a Cartan subalgebra  for both, $\kk_\CC$ and $\gg_\CC$. 
Let $\Delta^+$ be the set of positive roots for $\hh_\CC$ such that their root spaces belong to $\CC B_{12}+\pp^-$. 
Writing $\Lambda=(\Lambda_1,\Lambda_2)$ for $\Lambda \in\hh_\CC^\ast$, where $\Lambda_j=\Lambda(B_{jj})$, 
these root spaces are 
\begin{eqnarray*}
 \gg_{(1,-1)}=\CC B_{12}, & \gg_{(2,0)}=\CC (E_-)_{11}\:,\\
\gg_{(1,1)}=\CC (E_-)_{12}, & \gg_{(0,2)}=\CC (E_-)_{22}\:.
\end{eqnarray*}
Half the sum of positive root  is
\begin{equation*}
 \delta\:=\:\delta_G\:=\:\frac{1}{2}\sum_{\Lambda\in\Delta^+}\Lambda=(2,1)\:,
\end{equation*}
while $\delta_K=\frac{1}{2}(1,-1)$, and 
\begin{eqnarray*}
 \P&=& \U(\gg_\CC)\pp^- + \U(\gg_\CC)B_{12}\:,\\
\N&=& \pp^+\U(\gg_\CC) + B_{21}\U(\gg_\CC)\:.
\end{eqnarray*}
Next we compute the images of $C_1$ and $C_2$ under $\gamma$.
Using Corollary~\ref{cor_casimir_operatoren} and the Lie bracket relations, we get
\begin{equation*}
p_+(C_1)\:=\:p_+\bigl(\trace(B^2)+(m+1)\trace(B)\bigr)\:,
\end{equation*}
where 
\begin{equation*}
 p_+(\trace(B^2))\:=\:\sum_j B_{jj}^2+(B_{11}-B_{22})\:.
\end{equation*}
And
\begin{eqnarray*} 
 &&p_+\left(\frac{1}{2}(\trace(B^4)+\trace(B^{\ast4}))\right)\:=\:\\
 &&\hspace*{10mm}B_{11}^4+B_{22}^4+(B_{11}-B_{22})\left(2(B_{11}^2+B_{22}^2+B_{11}B_{22})+B_{11}-B_{22}+1\right)\:,
 \end{eqnarray*}
 \begin{equation*}
   p_+(\frac{1}{2}\trace(E_-E_+E_-E_+))\:=\:8(B_{11}^2+B_{22}^2)+5(B_{11}+B_{22})^2+8(B_{11}-B_{22})\:,
 \end{equation*}
 \begin{eqnarray*} 
 &&p_+(2\trace(E_-E_+BB))\:=\:8(B_{11}^3+B_{22}^3)+2(B_{11}+B_{22})(B_{11}^2+B_{22}^2)\\
 &&\hspace*{40mm}+2(B_{11}-B_{22})(9B_{11}+5B_{22}+4)\:,\\
 && p_+(\sum_{i,j,k,l}\{(E_+)_{kl},(E_-)_{ij}\}B_{kj}B_{li})\:=\:\\
 &&\hspace*{28mm}4(B_{11}^3+B_{22}^3)+2(B_{11}+B_{22})B_{11}B_{22}+5(B_{11}^2-B_{22}^2)\:.
\end{eqnarray*}
Applying   $\tau_+(B_{jj})=B_{jj}-(m+1-j)$ we receive
\begin{prop}\label{prop_harish-chandra}
 Let $m=2$. The images of the Casimir elements under the Harish-Chandra homomorphism are
\begin{eqnarray*}
 \gamma(C_1)&=&B_{11}^2+B_{22}^2-5\:,\\
\gamma(C_2)&=&B_{11}^4+B_{22}^4-17+3\gamma(C_1)\:.
\end{eqnarray*}
As $\gamma(C_2)$ is not a multiple of $\gamma(C_1)$, they generate $\U(\hh_\CC)^W$.
\end{prop}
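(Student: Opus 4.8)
The plan is to compute the Harish-Chandra image $\gamma=\tau_+\circ p_+$ directly on $C_1$ and $C_2$, feeding in the $p_+$-values displayed just before the statement and then applying the twist; all of the genuinely new input has in fact already been recorded in those displayed formulas, so what remains is assembly. For $m=2$ the twist acts on the Cartan by $\tau_+(B_{11})=B_{11}-2$ and $\tau_+(B_{22})=B_{22}-1$ (these are the values $B_{jj}-(m+1-j)$), extended multiplicatively. Thus $p_+(C_1)=p_+(\trace(B^2))+(m+1)\trace(B)=B_{11}^2+B_{22}^2+(B_{11}-B_{22})+3(B_{11}+B_{22})=B_{11}^2+B_{22}^2+4B_{11}+2B_{22}$, and applying $\tau_+$ the linear part is absorbed into completing the squares, leaving $(B_{11}-2)^2+(B_{22}-1)^2+4(B_{11}-2)+2(B_{22}-1)=B_{11}^2+B_{22}^2-5$, which is the claimed $\gamma(C_1)$.

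For $\gamma(C_2)$ I would run the same bookkeeping from the expression for $C_2$ in Corollary~\ref{cor_casimir_operatoren}, substituting the displayed $p_+$-images of $\tfrac12(\trace(B^4)+\trace((B^\ast)^4))$, of $\tfrac12\trace(E_-E_+E_-E_+)$ together with that of its conjugate $\tfrac12\trace(E_+E_-E_+E_-)$ (which comes out of the identical normal-ordering computation), of $2\trace(E_-E_+BB)$ and $2\trace(E_+E_-B^\ast B^\ast)$, of $\sum_{i,j,k,l}\{(E_+)_{kl},(E_-)_{ij}\}B_{kj}B_{li}$, and of $\trace(E_+E_-)+\trace(E_-E_+)$, and then collecting. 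A useful check on the arithmetic is that $\gamma(C_2)$ must land in $\U(\hh_\CC)^W$, and since $W$ contains all sign changes the outcome can only involve even-degree monomials in $B_{11},B_{22}$: after applying $\tau_+$ and regrouping, everything of degree $\le 3$ must reorganize into $3(B_{11}^2+B_{22}^2)-32$, so that $\gamma(C_2)=B_{11}^4+B_{22}^4+3(B_{11}^2+B_{22}^2)-32=B_{11}^4+B_{22}^4-17+3\gamma(C_1)$.

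For the last assertion: $W=W(\gg_\CC,\hh_\CC)$ is the Weyl group of type $C_2$, which acts on $\hh_\CC$ (identified with $\CC^2$ via $B_{11},B_{22}$) by all permutations and all sign changes of the two coordinates; hence by Chevalley's theorem $\U(\hh_\CC)^W$ is a polynomial ring and one may take as fundamental invariants $p_1:=B_{11}^2+B_{22}^2$ and $p_2:=B_{11}^4+B_{22}^4$, of degrees $2$ and $4$ with $2\cdot 4=8=|W|$. From the formulas just obtained, $p_1=\gamma(C_1)+5$ and $p_2=\gamma(C_2)-3\gamma(C_1)+17$, so $\CC[\gamma(C_1),\gamma(C_2)]=\CC[p_1,p_2]=\U(\hh_\CC)^W$; equivalently, the leading forms $B_{11}^2+B_{22}^2$ and $B_{11}^4+B_{22}^4$ of $\gamma(C_1)$ and $\gamma(C_2)$ are algebraically independent homogeneous invariants the product of whose degrees is $|W|$, which already forces $\gamma(C_1),\gamma(C_2)$ to be a system of basic invariants. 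Since $\gamma$ restricts to an isomorphism $\gamma\colon\zz_\CC\,\tilde\longrightarrow\,\U(\hh_\CC)^W$, it follows that $C_1$ and $C_2$ generate $\zz_\CC$, which also settles the point deferred in the proof of Corollary~\ref{cor_casimir_operatoren}.

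The only real obstacle is the sheer volume of the $p_+$-computations behind the displayed quartic formulas: each trace such as $\trace(E_+E_-E_+E_-)$ or $\trace(E_+BE_-B^\ast)$ has to be rewritten in PBW normal form for the triangular decomposition $\U(\gg_\CC)=\U(\hh_\CC)\oplus(\P+\N)$ by repeatedly applying the bracket relations of $\gg_\CC$, after which $p_+$ just reads off the purely Cartan part; it is there that sign and coefficient slips are easy to make, which is the reason these identities are cross-checked with Magma. Granting the displayed $p_+$-formulas, the rest --- assembling $\gamma(C_1),\gamma(C_2)$ and identifying $\U(\hh_\CC)^W$ --- is routine.
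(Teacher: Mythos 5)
Your proposal is correct and follows essentially the same route as the paper: the displayed $p_+$-images carry all the real content, and the proposition is obtained by applying $\tau_+(B_{11})=B_{11}-2$, $\tau_+(B_{22})=B_{22}-1$ and collecting terms, exactly as you do (your explicit check for $\gamma(C_1)$ and your parity consistency check for $\gamma(C_2)$ match the paper's computation, which likewise leaves the quartic assembly to the displayed, Magma-verified formulas). Your Chevalley argument for the last clause --- the leading forms $B_{11}^2+B_{22}^2$ and $B_{11}^4+B_{22}^4$ are algebraically independent invariants of degrees $2$ and $4$ with $2\cdot 4=8=\lvert W\rvert$ --- is in fact a more complete justification than the paper's one-line remark that $\gamma(C_2)$ is not a multiple of $\gamma(C_1)$.
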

\begin{cor}\label{harish-chandra}
 Let $m=2$ and $\Lambda=(\Lambda_1,\Lambda_2)\in\hh_\CC^\ast$. Then
\begin{eqnarray*}
 \Lambda(C_1)\:=\:\Lambda(\gamma(C_1))&=&\Lambda_{1}^2+\Lambda_{2}^2-5\:,\\
\Lambda(C_2)\:=\:\Lambda(\gamma(C_2))&=&\Lambda_{1}^4+\Lambda_{2}^4-17+3\Lambda(C_1)\:.
\end{eqnarray*}
\end{cor}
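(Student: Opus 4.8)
The plan is to derive Corollary~\ref{harish-chandra} as an immediate specialization of Proposition~\ref{prop_harish-chandra}. The Harish-Chandra homomorphism $\gamma$ sends $\zz_\CC$ isomorphically onto $\U(\hh_\CC)^W$, and for a one-dimensional representation with infinitesimal character $\Lambda\in\hh_\CC^\ast$, a central element $Z$ acts by the scalar $\Lambda(\gamma(Z))$, where $\Lambda$ is extended multiplicatively from $\hh_\CC$ to the polynomial algebra $\U(\hh_\CC)$. Concretely, since $\hh_\CC=\CC B_{11}+\CC B_{22}$ and $\Lambda_j=\Lambda(B_{jj})$, the element $B_{11}$ acts by $\Lambda_1$ and $B_{22}$ by $\Lambda_2$, so any polynomial in $B_{11},B_{22}$ is evaluated by the substitution $B_{jj}\mapsto\Lambda_j$.

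First I would recall the general fact that for $Z\in\zz_\CC$ acting on an irreducible module with infinitesimal character $\Lambda$, the scalar of action is $\Lambda(\gamma(Z))$ (this is the defining property of the Harish-Chandra isomorphism, cf.\ \cite[IV.7]{KnappVogan} already cited). Then I would apply this to $Z=C_1$ and $Z=C_2$, using the explicit formulae
\begin{equation*}
\gamma(C_1)=B_{11}^2+B_{22}^2-5\:,\qquad \gamma(C_2)=B_{11}^4+B_{22}^4-17+3\gamma(C_1)
\end{equation*}
from Proposition~\ref{prop_harish-chandra}. Substituting $B_{11}\mapsto\Lambda_1$, $B_{22}\mapsto\Lambda_2$ and noting that $\Lambda$ extended to $\U(\hh_\CC)$ is an algebra homomorphism, hence $\Lambda(\gamma(C_1))=\Lambda_1^2+\Lambda_2^2-5$ and $\Lambda(\gamma(C_2))=\Lambda_1^4+\Lambda_2^4-17+3\Lambda(\gamma(C_1))$, gives exactly the claimed identities. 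The equalities $\Lambda(C_1)=\Lambda(\gamma(C_1))$ and $\Lambda(C_2)=\Lambda(\gamma(C_2))$ are simply the statement that the scalar action of a central element is computed through $\gamma$.

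There is essentially no obstacle here: this corollary is pure bookkeeping, recording the infinitesimal-character specialization of the polynomials computed in Proposition~\ref{prop_harish-chandra} so that they are available in the form needed for the spectral analysis of $D_+(u,\Lambda)$ and $D_-(v,\Lambda)$ later in the paper. The only point requiring minor care is notational consistency — that the two uses of the symbol $\Lambda$ (as a functional on $\hh_\CC$ and as its multiplicative extension to $\U(\hh_\CC)$) are compatible, which they are by definition of the extension. I would therefore present the proof in one or two lines.

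\begin{proof}[Proof of Corollary~\ref{harish-chandra}]
A central element $Z\in\zz_\CC$ acts on a module with infinitesimal character $\Lambda\in\hh_\CC^\ast$ by the scalar $\Lambda(\gamma(Z))$, where $\Lambda$ is extended to an algebra homomorphism $\U(\hh_\CC)\to\CC$ by $\Lambda(B_{jj})=\Lambda_j$. Inserting the formulae of Proposition~\ref{prop_harish-chandra} and using that this extension is multiplicative yields
\begin{equation*}
\Lambda(C_1)=\Lambda_1^2+\Lambda_2^2-5\:,\qquad \Lambda(C_2)=\Lambda_1^4+\Lambda_2^4-17+3\Lambda(C_1)\:,
\end{equation*}
as claimed.
\end{proof}
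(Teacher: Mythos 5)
Your proof is correct and takes the same route the paper intends: the paper states Corollary~\ref{harish-chandra} without any separate proof, treating it as the immediate evaluation of the polynomials $\gamma(C_1),\gamma(C_2)$ from Proposition~\ref{prop_harish-chandra} at $B_{jj}\mapsto\Lambda_j$. Your explicit remark that the scalar action of a central element is $\Lambda(\gamma(Z))$ just makes the paper's implicit bookkeeping visible.
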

By Bezout's theorem, the Casimir elements have at most eight zeros $\Lambda$ in common.
These obviously are $(\pm1,\pm2)$, $(\pm2,\pm1)$, the Weyl group conjugates of $\Lambda=(2,1)$.
Cor.~\ref{harish-chandra} is independent of the chosen Cartan subalgebra $\hh$ in the sence that any isometric isomorphism to a second Cartan subalgebra $\tilde \hh$ will produce the same
formulae. Especially, if we choose the diagonal subalgebra $\mathfrak a=\tilde\hh$ then Cor.~\ref{harish-chandra} remains true with respect to the coordinate functions $\Lambda_1,\Lambda_2$.
\subsection{Representations of scalar $K$-type}\label{heighest_weight_representations}
It is wellknown (see~\cite{maass}) that $\trace((E_+E_-)^n)$ are invariant differential operators for fixed
$K$-type.
However, the Casimir operators  are globally defined.  
Here we fix the connection for scalar weight  $(\kappa,\dots,\kappa)$.

\begin{lem}\label{Lemma_j_fuer_U}
Let $\pi$ be a representation of $K$ of highest weight $(\kappa,\dots,\kappa)$.
Then the actions  of the basis elements $B_{kl}$, $1\leq k,l\leq m$, of  $\mathfrak k_\CC$ is given by
\begin{equation*}
\pi(B_{kl})\:=\:\kappa\cdot\delta_{kl}.
\end{equation*}
\end{lem}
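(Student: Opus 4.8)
The plan is to identify the representation $\pi$ of highest weight $(\kappa,\dots,\kappa)$ with a one-dimensional (scalar) representation of $K\cong U_m$ and to read off the action of $\mathfrak k_\CC$ from this. First I would recall that, under the isomorphism $K\xrightarrow{\sim}U_m$ given by $k\mapsto J(k,i)$, the diagonal torus of $U_m$ corresponds to $\exp(\mathfrak t)$ where $\mathfrak t=\sum_j\RR B_{jj}$ is the compact Cartan subalgebra chosen above (recall $\hh_\CC=\CC B_{11}+\CC B_{22}$ in the case $m=2$). The weight lattice of $U_m$ is $\ZZ^m$, and an irreducible representation with highest weight $(\kappa,\dots,\kappa)$ is the character $\det{}^{\kappa}$ of $U_m$, hence one-dimensional. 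Concretely $\pi(\exp(tB_{jj}))$ acts by the scalar $e^{i\kappa t}$ for each $j$, after normalizing the identification of $B_{jj}$ with the relevant coordinate on $\mathfrak t$ so that $B_{jj}$ has weight $e_j$; differentiating at $t=0$ gives $\pi(B_{jj})=\kappa$. This matches the sign and normalization conventions fixed earlier: the root spaces $\gg_{(1,-1)}=\CC B_{12}$, etc., show that $B_{jj}$ pairs with a weight $\Lambda$ by $\Lambda(B_{jj})=\Lambda_j$, so the highest weight $(\kappa,\dots,\kappa)$ indeed assigns eigenvalue $\kappa$ to each $B_{jj}$.

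For the off-diagonal generators $B_{kl}$ with $k\neq l$, I would argue that a one-dimensional representation must kill the derived subalgebra $[\mathfrak k_\CC,\mathfrak k_\CC]$. Since $\mathfrak k\cong\mathfrak u_m$ has derived algebra $\mathfrak s\mathfrak u_m$, and the $B_{kl}$ for $k\neq l$ lie in $\mathfrak s\mathfrak u_m$ together with the differences $B_{kk}-B_{ll}$, any character of $\mathfrak k_\CC$ annihilates all of them. (Equivalently: $B_{kl}$ for $k\ne l$ spans a root space $\gg_{(e_k-e_l)}$ with respect to $\hh_\CC$, and a weight vector of weight $\mu$ in a one-dimensional module would have to satisfy $\mu=\mu+(e_k-e_l)$, forcing the action to be zero. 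This is how the relations $[\mathfrak h_\CC, B_{kl}]=(e_k-e_l)(\cdot)B_{kl}$ displayed above, applied inside a scalar module, immediately give $\pi(B_{kl})=0$.) Combining the two cases yields $\pi(B_{kl})=\kappa\,\delta_{kl}$, which is exactly the claim.

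The only genuine point to be careful about — and the step I expect to be the main obstacle in writing this cleanly — is the \emph{normalization}: fixing once and for all that "highest weight $(\kappa,\dots,\kappa)$" refers to the weights $\Lambda\mapsto(\Lambda(B_{11}),\dots,\Lambda(B_{mm}))$ with respect to the positive system whose root spaces sit in $\CC B_{12}+\pp^-$, and verifying that this is consistent with interpreting $\pi$ as $\det^{\kappa}$ on $U_m$ rather than $\det^{-\kappa}$. Here I would simply invoke the conventions already set up in Section~\ref{section_harisch-chandra} (in particular the explicit list of root spaces and the value $\delta_K=\tfrac12(1,-1)$), so that no sign ambiguity remains; the factor of automorphy $j(g,i)^{-\kappa}$ appearing in the Poincar\'e series then corresponds to precisely this $K$-type. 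Once the bookkeeping is pinned down, the proof is the two-line argument above: diagonal generators act by $\kappa$ because the highest weight is $(\kappa,\dots,\kappa)$, off-diagonal generators act by $0$ because they lie in root spaces (equivalently in $[\mathfrak k_\CC,\mathfrak k_\CC]$) and the module is one-dimensional.
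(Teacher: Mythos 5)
Your proposal is correct and follows essentially the same route as the paper: both arguments reduce to the observation that the $K$-type $(\kappa,\dots,\kappa)$ is the one-dimensional representation $\det^{\kappa}$ of $U_m$ and that $dJ$ sends $B_{kl}$ to $e_{lk}$. The only (cosmetic) difference is that the paper evaluates $\pi(B_{kl})$ by exponentiating $e_{lk}$ to a triangular matrix and differentiating $\det^{\kappa}$, whereas you obtain the off-diagonal vanishing from the fact that a character kills $[\kk_\CC,\kk_\CC]$ and the diagonal value $\kappa$ directly from the convention $\Lambda_j=\Lambda(B_{jj})$; your stray $e^{i\kappa t}$ is a harmless normalization slip that your own discussion already flags and resolves.
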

\begin{proof}
Let $\pi=(\kappa,\dots,\kappa)$ be irreducible. Then $V_\pi$ has dimension one and highest and lowest weight vectors coinside.
Under $dJ$ the element $B_{kl}$ is mapped to $A_{kl}-iS_{kl}=e_{lk}$.
As $\exp(te_{lk})=E_m+te_{lk}$ for $k\not=l$ respectively $\exp(te_{kk})=E_m+(e^{t}-1)e_{kk}$ is upper 
or lower triangular, we get $\pi(B_{kl})=\frac{d}{dt}\pi(\exp(te_{lk}))\mid_{t=0}=\kappa \delta_{kl}\frac{d}{dt}e^{t}
\mid_{t=0}=\kappa\delta_{kl}$.
\end{proof}
\begin{prop}\label{Prop_Casimir2_Operation}
Let $\pi$ be a smooth representation of $G$. Then the action of the Casimir elements on
its $K$-type $(\kappa,\dots,\kappa)$ are given by
\begin{equation*}
 \pi(C_1)\:=\: \pi(\trace(E_+E_-))-\kappa m(m+1-\kappa)
\end{equation*}
and
\begin{eqnarray*}
 \pi(C_2)&=&\pi(\trace(E_+E_-E_+E_-))+m\kappa^4\\
&&+((m+1)^2-2\kappa (m+1)+2\kappa^2)\bigl(\pi(\trace(E_+E_-))-\kappa m(m+1)\bigr)\:.
\end{eqnarray*}
\end{prop}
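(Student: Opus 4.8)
The plan is to reduce everything to the explicit formulae of Corollary~\ref{cor_casimir_operatoren}, where $C_1$ and $C_2$ are written as non-commutative traces of products of the matrix-valued matrices $E_+$, $E_-$, $B$, $B^\ast$, and then to evaluate these on the scalar $K$-type $(\kappa,\dots,\kappa)$ by means of Lemma~\ref{Lemma_j_fuer_U}, which there identifies $\pi(B_{kl})=\kappa\delta_{kl}$, i.e.\ $\pi(B)=\pi(B^\ast)=\kappa E_m$ as matrices of operators on that $K$-type. Since $C_1,C_2$ lie in $\zz_\CC$ they act on the scalar $K$-type, so the identities make sense there. The decisive structural observation is that in the reformulations of Corollary~\ref{cor_casimir_operatoren} every occurrence of $B$ or $B^\ast$ has already been commuted to the right-hand end of its monomial; reading a product $X_{i_1}\cdots X_{i_r}$ right-to-left as a composition of operators, such a factor $B_{kl}$ or $B^\ast_{kl}$ thus acts first, on a vector still lying in the scalar $K$-type, and may simply be replaced by the scalar $\kappa\delta_{kl}$. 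In particular $\pi(\trace(B))=m\kappa$, $\pi(\trace(B^2))=m\kappa^2$, $\pi(\trace(B^4))=\pi(\trace((B^\ast)^4))=m\kappa^4$, while $\trace(E_+E_-B^\ast B^\ast)$ collapses to $\kappa^2\,\pi(\trace(E_+E_-))$, $\trace(E_-E_+BB)$ to $\kappa^2\,\pi(\trace(E_-E_+))$, $\trace(E_+E_-B^\ast)$ to $\kappa\,\pi(\trace(E_+E_-))$, $\trace(E_-E_+B)$ to $\kappa\,\pi(\trace(E_-E_+))$, and $\sum_{ijkl}\{(E_+)_{kl},(E_-)_{ij}\}B_{kj}B_{li}$ to $\kappa^2\bigl(\pi(\trace(E_+E_-))+\pi(\trace(E_-E_+))\bigr)$.

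It then remains to rewrite every $\trace(E_-E_+)$ produced this way back in terms of $\trace(E_+E_-)$ via the commutator relation $\trace(E_-E_+)=\trace(E_+E_-)+2(m+1)\trace(B)$ --- equivalently $\tfrac12(\trace(E_+E_-)+\trace(E_-E_+))=\trace(E_+E_-)+(m+1)\trace(B)$ from Corollary~\ref{cor_casimir_operatoren} --- reading again $\trace(B)$ as $m\kappa$ on the $K$-type; and, for $C_2$, to replace $\tfrac12(\trace(E_+E_-E_+E_-)+\trace(E_-E_+E_-E_+))$ by $\trace(E_+E_-E_+E_-)$ plus the lower-degree correction of Corollary~\ref{cor_casimir_operatoren}, all of whose remaining terms ($\trace(B)$ and $\trace(E_\pm E_\mp B^{(\ast)})$) are already of the admissible right-justified shape. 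For $C_1$ this is immediate from $C_1=\trace(E_+E_-)+(m+1)\trace(B)+\trace(B^2)$, giving $\pi(C_1)=\pi(\trace(E_+E_-))-\kappa m(m+1-\kappa)$. For $C_2$ one substitutes term by term into the formula of Corollary~\ref{cor_casimir_operatoren}; each piece becomes $\pi(\trace(E_+E_-E_+E_-))$, a scalar multiple of $\pi(\trace(E_+E_-))$, or a pure scalar, and collecting them yields the stated closed form.

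The main obstacle is purely bookkeeping: the degree-four expression for $C_2$ carries many non-cyclic trace monomials, and one must be disciplined never to substitute an eigenvalue for a $B$ or $B^\ast$ until it has genuinely been commuted into a position acting on a scalar-$K$-type vector --- getting the sign of $\pi(B_{kl})$ and the ordering conventions mutually consistent is exactly what Lemma~\ref{Lemma_j_fuer_U} is recorded for --- and then track the sizeable number of scalar contributions so that they combine into precisely $m\kappa^4+((m+1)^2-2\kappa(m+1)+2\kappa^2)(\pi(\trace(E_+E_-))-\kappa m(m+1))$. The degree-two case for $C_1$ is the warm-up pinning these conventions down; repeating the same manipulations one degree higher, aided by the rearrangement identities of Corollary~\ref{cor_casimir_operatoren}, delivers $C_2$.
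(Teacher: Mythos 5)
Your strategy is exactly the paper's: the entire proof there is ``apply Lemma~\ref{Lemma_j_fuer_U} to Corollary~\ref{cor_casimir_operatoren}'', and your care about commuting each $B$, $B^\ast$ into the position where it acts first on a scalar-$K$-type vector is the right way to make that one-liner rigorous. The problem is that the explicit eigenvalues you then substitute do not produce the stated constants. Take $C_1$: from Corollary~\ref{cor_casimir_operatoren}, $C_1=\trace(E_+E_-)+(m+1)\trace(B)+\trace(B^2)$, and with your values $\pi(\trace(B))=m\kappa$, $\pi(\trace(B^2))=m\kappa^2$ you obtain $\pi(C_1)=\pi(\trace(E_+E_-))+\kappa m(m+1)+m\kappa^2=\pi(\trace(E_+E_-))+\kappa m(m+1+\kappa)$, whereas the Proposition asserts $\pi(\trace(E_+E_-))-\kappa m(m+1)+m\kappa^2$. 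These differ by $2\kappa m(m+1)$, so the step ``giving $\pi(C_1)=\pi(\trace(E_+E_-))-\kappa m(m+1-\kappa)$'' does not follow from the substitutions you list.

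The resolution is that on the $K$-type carried by weight-$\kappa$ forms the eigenvalue that must be fed into Corollary~\ref{cor_casimir_operatoren} is $\pi(B_{kl})=-\kappa\delta_{kl}$, not $+\kappa\delta_{kl}$: the sign conventions for $B$ in Lemma~\ref{Lemma_j_fuer_U} and Lemma~\ref{Eplusminus_auf_j} are not mutually consistent, so the stated eigenvalue cannot be taken on faith. You can pin the sign down independently. On the holomorphic (limit of) discrete series with minimal $K$-type $(4,4)$ and infinitesimal character $\Lambda=(2,3)$, the operators $\trace(E_+E_-)$ and $\trace(E_+E_-E_+E_-)$ annihilate the lowest $K$-type (their rightmost factor is an $(E_-)_{ij}$), while Corollary~\ref{harish-chandra} gives $\Lambda(C_1)=2^2+3^2-5=8=-\kappa m(m+1-\kappa)$ and $\Lambda(C_2)=16+81-17+24=104=m\kappa^4+\bigl((m+1)^2-2\kappa(m+1)+2\kappa^2\bigr)\bigl(-\kappa m(m+1)\bigr)$; both checks succeed only with the $-\kappa$ convention (with $+\kappa$ one would get $\kappa m(m+1+\kappa)=56$ for $C_1$ and a cross-term coefficient $(m+1)^2+2\kappa(m+1)+2\kappa^2$ for $C_2$). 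With this single sign corrected, your bookkeeping scheme does deliver both formulas; as written, every term containing an odd number of $B$'s carries the wrong sign and the constants cannot assemble into the claimed expressions.
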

\begin{proof}[Proof of Proposition~\ref{Prop_Casimir2_Operation}]
Apply Lemma~\ref{Lemma_j_fuer_U}  to 
Corollary~\ref{cor_casimir_operatoren}.
The result on $C_1$ is due to~\cite[Chapter~4]{weissauersLN}. 
\end{proof}

\begin{prop}\label{weissauers_bem_ueber_K-typ}\cite[Theorem~1.1]{zhu}
 If $\pi$ and $\pi'$ are representations  of $G$ of the same infinitesimal character containing the same scalar $K$-type, then $\pi$ and $\pi'$ are isomorphic.
\end{prop}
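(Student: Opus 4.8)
The plan is to reduce the assertion (understood for $\pi,\pi'$ irreducible) to a uniqueness statement for a one-dimensional module over a commutative algebra of radial operators. Since the scalar $K$-type $\mu=(\kappa,\dots,\kappa)$ is one-dimensional we regard it as a character of $K$; write $e_\mu$ for the projector onto the $\mu$-isotypic part of a representation and, for a representation $\pi$ of $G$, set $\pi[\mu]=e_\mu\pi$. Then $\pi[\mu]$ is naturally a module over the algebra $\mathcal R_\mu:=e_\mu\,\U(\gg_\CC)\,e_\mu$ of $\mu$-radial operators, equivalently over the algebra of $G$-invariant differential operators on the line bundle over $G/K$ attached to $\mu$. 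The first input is the familiar dictionary: if $\mu$ occurs in an irreducible $(\gg_\CC,K)$-module $\pi$, then $\pi[\mu]$ is a simple $\mathcal R_\mu$-module, and $\pi\mapsto\pi[\mu]$ is injective on isomorphism classes of irreducibles in which $\mu$ occurs (Harish-Chandra's subquotient theorem in its $\mu$-radial form). It therefore suffices to identify $\pi[\mu]$ with $\pi'[\mu]$ as $\mathcal R_\mu$-modules.

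The second input is multiplicity one for scalar $K$-types: $\dim\pi[\mu]\le 1$ for every irreducible $\pi$. For the trivial $K$-type this is the Gelfand pair property of the symmetric pair $(\Sp_m(\RR),U_m)$, and it extends to an arbitrary scalar character $\mu$ by a twisting argument --- the production of an anti-involution of $\U(\gg_\CC)$ fixing $\mathcal R_\mu$ pointwise --- which also shows $\mathcal R_\mu$ to be commutative. Hence $\mathcal R_\mu$ acts on the at most one-dimensional space $\pi[\mu]$ through an algebra character $\lambda_\pi\colon\mathcal R_\mu\to\CC$, and, $\pi[\mu]$ being nonzero by hypothesis, $\pi$ is determined up to isomorphism by $\lambda_\pi$.

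It remains to see that $\lambda_\pi$ is a function of the infinitesimal character $\chi_\pi$. Each $z\in\zz_\CC$ is $\mu$-radial and acts on $\pi[\mu]$ by the scalar $\chi_\pi(z)$, so it is enough to know that $\mathcal R_\mu$ is generated as a $\CC$-algebra by the image of $\zz_\CC$ together with finitely many further operators whose action on $\pi[\mu]$ is a universal polynomial in the values $\chi_\pi(z)$. For $m=2$ this is precisely what Section~\ref{section_Casimir} supplies: the Maass operators $\trace(E_+E_-)$ and $\trace(E_+E_-E_+E_-)$ are $\mu$-radial (cf.\ \cite{maass}) and, with the image of $\zz_\CC$, generate $\mathcal R_\mu$, while Proposition~\ref{Prop_Casimir2_Operation} expresses the action of each of them on the $\mu$-isotypic part of any smooth representation as an explicit polynomial in $\pi(C_1),\pi(C_2)$ with coefficients depending only on $\kappa$. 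Thus $\lambda_\pi$ depends only on $(\chi_\pi(C_1),\chi_\pi(C_2))$, so $\chi_\pi=\chi_{\pi'}$ forces $\lambda_\pi=\lambda_{\pi'}$; then $\pi[\mu]\cong\pi'[\mu]$, and $\pi\cong\pi'$ follows by the injectivity noted above.

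I expect the crux to be the combination of multiplicity one and commutativity for scalar $K$-types with the claim that $\mathcal R_\mu$ is small enough to be recovered from $\zz_\CC$ and the low-degree Maass operators; granting these, the rest is soft representation theory together with the finite, Magma-assisted identity of Proposition~\ref{Prop_Casimir2_Operation} (in spirit a Bezout count, like the one in the remark following Corollary~\ref{harish-chandra}). For the representations that actually occur in this paper one can shortcut the argument: in the relevant weight range a $\mu$-vector $v$ is annihilated by $\pp^+$ (or by $\pp^-$), so that $\pi=\U(\gg_\CC)v$ is a highest- or lowest-weight $(\gg_\CC,K)$-module with minimal $K$-type $\mu$, and such modules are pinned down by their infinitesimal character via Corollary~\ref{harish-chandra} and the Bezout count. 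The general statement is \cite[Theorem~1.1]{zhu}, whose proof proceeds through the classification of the irreducible representations of $\Sp_4(\RR)$ that contain a scalar $K$-type.
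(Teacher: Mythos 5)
The paper offers no proof of this proposition --- it is quoted verbatim from \cite[Theorem~1.1]{zhu} --- so there is no internal argument to measure yours against; what you have written is a reconstruction of the standard radial-part proof, and its skeleton is correct. Two of your inputs, however, are exactly where the substance of Zhu's theorem sits and deserve more than a gesture. First, commutativity of $\mathcal R_\mu=e_\mu\U(\gg_\CC)e_\mu$ and multiplicity one for a \emph{nontrivial} scalar character $\mu=\det^\kappa$ is the twisted Gelfand property of $(\Sp_m(\RR),U_m,\mu)$, a theorem in its own right for homogeneous line bundles over Hermitian symmetric spaces, not a formal corollary of the untwisted case. Second, that the radial parts of $\trace(E_+E_-)$ and $\trace(E_+E_-E_+E_-)$ generate $\mathcal R_\mu$ for $m=2$ is true but needs an argument: by PBW the associated graded of $\mathcal R_\mu$ is $S(\pp_\CC)^{K}$, which by classical invariant theory for $\GL_2(\CC)$ acting on pairs $(A,B)$ of symmetric $2\times 2$ matrices is the polynomial ring on $\trace(AB)$ and $\trace(ABAB)$, and one then lifts generators through the filtration. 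Granting that, you can streamline your own argument: Proposition~\ref{Prop_Casimir2_Operation} shows both generators act on the scalar $K$-type by scalars determined by $\chi_\pi(C_1)$ and $\chi_\pi(C_2)$, hence all of $\mathcal R_\mu$ acts by scalars there; simplicity of $\pi[\mu]$ as an $\mathcal R_\mu$-module then forces $\dim\pi[\mu]=1$ with no appeal to the twisted Gelfand property at all. Finally, your closing shortcut via highest- or lowest-weight modules presupposes that the $\mu$-vector is annihilated by $\pp^+$ or $\pp^-$, which for the representation relevant here is established by a separate argument inside the proof of Theorem~\ref{thm_continuation_kappa_2m} and so cannot substitute for the general statement; and the proposition must of course be read for irreducible admissible $\pi$, $\pi'$, as you note.
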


\section{On the  spectral decomposition of $L^2(\Gamma\backslash G)$}\label{section_spectrum}
Let $G=\Sp_2(\RR)$ be the symplectic group of genus two. 
Let $\Gamma$ be any subgroup of finite index in the full modular group 
$\Sp_2(\ZZ)$ containing 
  the group 
\begin{equation*}
 \Gamma_\infty\:=\:\{\begin{pmatrix}
  \pm E_2&\ast\\0&\pm E_2 
 \end{pmatrix}\in\Sp_2(\ZZ)\}
\end{equation*}
of translations.
The group $G$ acts on $L^2(\Gamma\backslash G)$ by right translations, and this $G$-action
 comes along with an action of the universal enveloping algebra 
$\mathfrak U(\gg_\CC)$ on $\mathcal C^\infty$-vectors.
As $\Gamma\backslash G$ isn't compact, the spectrum of $L^2(\Gamma\backslash G)$ contains  continuous parts.
We need some knowledge of the spectral decomposition and extract  this out of Langlands' theory of Eisenstein series~\cite{langlands}. 
(See also~\cite{konno},\cite{weissauersLN}.)

As the action of the center $\zz_\CC$ of $\mathfrak U(\gg_\CC)$ commutes with that of $G$ and $\mathfrak U(\gg_\CC)$, 
it acts by scalars on  irreducible  components.
Via the Harish-Chandra homomorphism these scalars are determined by the infinitesimal character $\Lambda\in\mathfrak a_\CC^\ast$ of the representation.
In general, the action of some Casimir element $C$ on any spectral component parametrized by $\Lambda$ is  given by applying the
Harish-Chandra homomorphism to $C$. In case of a $1$- or $2$-dimensional  parametrization (i.e. in case of a component
of the continuous spectrum) this involves a $1$- or $2$-dimensional integral of (a residue of) an Eisenstein series against
$\Lambda$.


Let $\mathfrak a$ be the  split component of the Borel subgroup 
\begin{equation*}
 B\:=\:\left\{\begin{pmatrix} T&X\\0&T'^{-1}\end{pmatrix}\mid T \textrm{ upper triangular }\right\}\:\subset\: G\:.
\end{equation*}
Identifying $\aaa_\CC^\ast$ with $\CC^2$ by choosing Euclidean coordinates $\Lambda=(\Lambda_1,\Lambda_2)$, 
the system of positive roots corresponding to $B$ is
\begin{equation*}
 \Sigma^+:=\{\alpha_1=(0,2),\alpha_2=(1,-1),\alpha_1+\alpha_2,\alpha_1+2\alpha_2\} \subset \mathfrak a_\CC^\ast\:.
\end{equation*}
Let $\delta=(2,1)$ be half the sum of positive roots.
The Weyl group $W$ of $G$ acts on $\aaa_\CC^\ast$. It is generated by the simple reflections $s_{\alpha_1}$ and $s_{\alpha_2}$. 
We have $B=NAM$, where $N$ is the uniponent radical normalized by $B$, the diagonal torus $A$ has Lie algebra $\aaa$, and $M\cong Z_2\times Z_2$ 
is   finite. Correspondingly, for $g\in G$ we have $g=namk$, where $k=k(g)\in K$ and $a=a(g)\in A$ etc.
Let
\begin{equation*}
 E_B(g,\phi,\Lambda)\:=\:\sum_{\gamma\in(\Gamma\cap B)\backslash\Gamma}a(\gamma g)^{\delta+\Lambda}\phi(\gamma g,\Lambda)
\end{equation*}
be an Eisenstein series  for $B$. 
Here the function $\phi$ belongs to a space $V$ such that for all $g\in G$ the function $\phi(gk^{-1})$ is of the same $K$-type and  the function $\phi(mg)$ belongs to 
a simple admissible subspace $V_M\subset L_0^2((\Gamma\cap M)\backslash M)$ (so $\phi$ is any function on the finite quotient).
The Eisenstein series converges absolutely in the cone
\begin{equation*}
 I\::=\:\{\Lambda\in\aaa_\CC^\ast\vert\langle \Lambda,\gamma\rangle>\langle \delta,\gamma\rangle,\gamma\in\Sigma^+\}\:.
\end{equation*}
For a simple root $\alpha_i$ let $^\bullet \aaa =\ker(\alpha_i)\subset\aaa$ and define
 $^\dagger\aaa$ by $\aaa=\:^\bullet\aaa\perp\:^\dagger\aaa$.
For any $\Lambda\in\aaa_\CC^\ast$, let $\Lambda=\:^\bullet\Lambda+\:^\dagger\Lambda$, where 
$^\bullet\Lambda=\Lambda\vert_{^\bullet \aaa_\CC}$ and $^\dagger\Lambda=\Lambda\vert_{^\dagger \aaa_\CC}$ are continued
to $\aaa_\CC$ by zero. 
Especially, $^\bullet\alpha_i=0$, so $\CC\alpha_i=(^\dagger\aaa_\CC)^\ast$.
Let $B\subset\:^\bullet\!P$ be the standard parabolic subgroup of $G$ with split component $^\bullet\aaa$ and corresponding
decomposition $^\bullet\!P=\:^\bullet\!N\:^\bullet\!A\:^\bullet\!M$. Here $^\bullet\!M$ is either $\SL_2(\RR)\times Z_2$ (Klingen) or 
$\SL_2(\RR)\rtimes Z_2$ (Siegel). 
There is a parabolic subgroup $^\dagger\!P$ of $^\bullet\!M$ corresponding to $B$,
\begin{equation*}
 ^\dagger\!P\:=\:^\bullet\!N\backslash (^\bullet\!M\:^\bullet\!N\cap B)\:\subset \:^\bullet\!M\:.
\end{equation*}
The split component of $^\dagger\!P$ can be identified with $^\dagger\!\aaa$.
The identity
\begin{equation}\label{E-reihen-zerlegung}
 E_B(g,\phi,\Lambda)\:=\:\sum_{\gamma\in(\Gamma\cap\:^\bullet\!P)\backslash \Gamma}\:^\bullet\!a(\gamma g)^
{^\bullet\!\delta+\:^\bullet\!\Lambda}
\:^\bullet\!E(\gamma g,\phi,\:^\dagger\!\Lambda)
\end{equation}
holds for $\Lambda\in I$, if
\begin{equation*}
 ^\bullet\!E(g,\phi,\:^\dagger\!\Lambda)\::=\:\sum_{\bar\gamma\in(\Gamma\cap \:^\dagger\!P)\backslash 
(\Gamma\cap\:^\bullet\!M)}\:
^\dagger a(\bar\gamma g)^{^\dagger\!\delta+\:^\dagger\!\Lambda}\phi(\bar\gamma g)
\end{equation*}
is an Eisenstein series for $^\bullet\!M$.
Whenever the inner Eisenstein series is defined, the series (\ref{E-reihen-zerlegung}) converges 
on the convex hull $\mathcal C(I\cup s_{\alpha_i}(I))$.
As $s_{\alpha_i}(\:^\bullet\!\Lambda)=\:^\bullet\!\Lambda$, the initial terms of the scattering operator $M(s,\Lambda)$, $s\in W$,
are given by
\begin{equation*}
 M(s_{\alpha_i},\Lambda)\:=\:M(s_{\alpha_i},^\dagger\!\Lambda)\:,
\end{equation*}
where $M(s_{\alpha_i},^\dagger\!\Lambda)$ is identified with the scattering operator belonging to $^\bullet\!E$.
Especially, $M(s_{\alpha_i},^\dagger\!\Lambda)$ is meromorphic in the complex variable $^\dagger\!\Lambda$ with only a finite 
number of poles $^\dagger\!\Lambda=c$, where $0\leq c\leq 1$.
Accordingly, the Eisenstein series $E_B(g,\phi,\Lambda)$ is meromorphically continued to $\mathcal C(I\cup s_{\alpha_i}(I))$
with a finite number of pole hyperplanes $H_{\alpha_i}(c)$, with $0\leq c\leq 1$,
\begin{equation*}
 H_{\alpha_i}(c)\:=\:
\{\Lambda\in\aaa_\CC^\ast\vert \langle \Lambda,\check{\alpha_i}\rangle=c\}\:=\:\{\frac{c}{2}\alpha_i+(^\bullet\aaa_\CC)^\ast\} \:.
\end{equation*}
Here $\check{\alpha_i}$ is the coroot of $\alpha_i$.
The Eisenstein series $E_B(g,\phi,\Lambda)$ and the scattering operator $M(s,\Lambda)$ have
meromorphic continuation to $\aaa_\CC^\ast$ and satisfy the functional equations
\begin{eqnarray*}
 M(ts,\Lambda)&=& M(t,s\Lambda)M(s,\Lambda)\:,\\
E_B(g,\phi,\Lambda)&=& E_B(g,M(s,\Lambda)\phi,s\Lambda)\:.
\end{eqnarray*}
So $E$ and $M$ only have a finite number of pole hyperplanes given by the images of
$H_{\alpha_1   }(c)$ and $H_{\alpha_2}(c)$ under the Weyl group,
\begin{equation*}
 H_{\alpha_1+\alpha_2}(c)\:=\: s_{\alpha_1} H_{\alpha_2}(c)\:,\:\quad H_{\alpha_1+2\alpha_2}(c)\:=\: s_{\alpha_2} H_{\alpha_1}(c)\:.
\end{equation*}
%
Especially, the continuations are holomorphic on the cones $s(I)$, $s\in W$.
\begin{figure}
\begin{tikzpicture}
 \draw [->](0,-4.4) -- (0,4.4);
 \draw (-0.25,4.4) node {$\Lambda_2$};
 \draw [->,style=thick] (0,0)--(0,1);
 \draw (-0.25,0.9) node {$\alpha_1$};
\draw [->](-4.4,0) -- (4.4,0);
\draw (4.4,-0.4) node {$\Lambda_1$};
\draw [->,style=thick] (0,0)--(0.5,-0.5);
\draw (0.6,-0.6) node {$\alpha_2$};
\filldraw [lightgray] (4.4,0.5)--(1,0.5) -- (4.4,3.9);
\filldraw [lightgray] (4.4,-0.5)--(1,-0.5) -- (4.4,-3.9);
\filldraw [lightgray] (-4.4,0.5)--(-1,0.5) -- (-4.4,3.9);
\filldraw [lightgray] (-4.4,-0.5)--(-1,-0.5) -- (-4.4,-3.9);
\filldraw [lightgray] (0.5,4.4)--(0.5,1) -- (3.9,4.4);
\filldraw [lightgray] (0.5,-4.4)--(0.5,-1) -- (3.9,-4.4);
\filldraw [lightgray] (-0.5,4.4)--(-0.5,1) -- (-3.9,4.4);
\filldraw [lightgray] (-0.5,-4.4)--(-0.5,-1) -- (-3.9,-4.4);

\draw (3.2,1.3) node {$I$};
\draw (3.2,-1.3) node {$s_{\alpha_1} (I)$};
\draw (1.3,3.2) node {$s_{\alpha_2} (I)$};
\filldraw (1,0.5) circle (1pt);
\draw (1.4,0.65) node {$\delta$};

\draw (4.4,0.5)--(-4.4,0.5);
\draw (-4.4,0.7) node {$H_{\alpha_1}(1)$};
\draw  (-3.9,-4.4)--(4.4,3.9);
\draw (3.9,3.5) node {$H_{\alpha_2}(1)$};

\draw  (0.3,-4.4)--(0.3,4.4);
\draw (1.2,-4.4) node {$H_{\alpha_1+2\alpha_2}(c')$};

\draw (4.4,-4.2)--(-4.2,4.4);
\draw (4.2,-4.3) node {$H_{\alpha_1+\alpha_2}(c)$};
\end{tikzpicture}
\caption{  \textit{Examples of pole hyperplanes for Eisenstein series. }}
\end{figure}
\bigskip

The spectrum of $L^2(\Gamma\backslash G)$ arising from the Borel group  is described as follows.
For $\Lambda_0\in \re I$ the function
\begin{equation*}
 \tilde \phi(g)\:=\:\int_{\re\Lambda=\Lambda_0}E_B(g,\phi,\Lambda)~d\Lambda
\end{equation*}
belongs to $L^2(\Gamma\backslash G)$, and the scalar product of two such can be expressed by
\begin{equation}\label{allgemeine_skalarproduktformel}
 \langle \tilde\phi,\tilde \psi\rangle\:=\:
\int_{\re\Lambda=\Lambda_0} f(\Lambda)~d\Lambda\:,
\end{equation}
where
\begin{equation*}
 f(\Lambda)\:=\:  \sum_{s\in W} \left( M(s,\Lambda)\Phi(\Lambda),\Psi(-s\bar\Lambda)\right)\:
\end{equation*}
for certain functions $\Phi,\Psi:\mathfrak a_\CC^\ast \to V$ of fast decay.
We choose a path in $\mathfrak a^\ast$ from $\Lambda_0$ to zero which omits the intersections $H_\gamma(c)\cap H_{\gamma'}(c')$ of hyperplanes.
Evaluating the integral (\ref{allgemeine_skalarproduktformel}) by the residue theorem we get a term 
\begin{equation*}
 \int_{\re\Lambda=0} f(\Lambda)~d\Lambda\:,
\end{equation*}
which gives rise to a $2$-dimensional parametrized $(\re \Lambda=0)$ spectral component. (By the functional equations, it is enough to parametrize this component by a cone contained
in $\re \Lambda=0$. But for our purpose the location $\re \Lambda=0$ is sufficient.)
This cannot be decomposed into irreducibles and gives rise to the so-called $2$-dimensional 
spectral component.
Further we get residual terms at any intersection point $z=z(\gamma,c)$ of the path with a hyperplane $H_\gamma(c)$. For simple roots $\gamma=\alpha_j$ we have
$^\dagger\!z=\frac{c}{2}\alpha_j$ and the terms are (up to constants)
\begin{equation*}
\int_{^\bullet\Lambda=^\bullet\!z+i^\bullet\!\aaa^\ast} \mathop{res}\nolimits_{^\dagger\!\Lambda=^\dagger\!z}f(^\bullet\!\Lambda+\:^\dagger\!\Lambda)~d^\bullet\!\Lambda\:.
\end{equation*}
For these we again apply the residue theorem  to get terms 
\begin{equation*}
 \int_{^\bullet\Lambda=i^\bullet\!\aaa^\ast}\mathop{res}\nolimits_{^\dagger\!\Lambda=\frac{c}{2}\alpha_j}  f(^\bullet\!\Lambda+\:^\dagger\!\Lambda))~d^\bullet\!\Lambda\:,
\end{equation*}
which give rise to spectral components which cannot be decomposed into irreducibles and are parametrized by the $1$-dimensional sets
\begin{equation*}
 K_{\alpha_j}(c)\: =\:\frac{c}{2}\alpha_j+i(^\bullet\aaa)^\ast\:
\end{equation*}
for simple roots. 
For non-simple roots $\gamma$ we can express  $K_\gamma(c)=sK_{\alpha_j}(c)$ for $s\in W$ with $\gamma=s\alpha_j$.
Further we get residual terms 
\begin{equation*}
 \mathop{res}\nolimits_{^\dagger\!\Lambda=^\dagger\!\tilde\Lambda}\mathop{res}\nolimits_{^\dagger\!\Lambda=\frac{c}{2}\alpha_j} f(^\bullet\!\Lambda+^\dagger\!\Lambda)
\end{equation*}
at intersection points $\tilde\Lambda=H_{\alpha_j}(c)\cap H_{\gamma'}(c')$ of hyperplanes between $z=\frac{c}{2}\alpha_j+^\bullet\!z$ and $\frac{c}{2}\alpha_j$,
which give rise to spectral components which decompose to irreducible representations with infinitesimal character $\tilde\Lambda$.
(Correspondingly for non-simple roots.)
As there is the constraint $0\leq c\leq 1$ for the pole hyperplanes, the real parts of all these spectral components belong to a 
circle of radius $\lvert\!\lvert \delta\rvert\!\rvert=\sqrt{5}$.
These cases exhaust the spectrum $L^2(B)\subset  L^2(\Gamma\backslash G)$.

If there is more than one $\Gamma$-conjugation class of the Borel subgroup $B$, then any of these classes produces a picture
like this. But as all Borel subgroups are conjugated under $\Sp_2(\ZZ)$, their Langlands parameters  can be
mapped into $\aaa_\CC^\ast$ accordingly. They can produce additional hyperplanes $H_\gamma(c)$, but $c$ is still
bounded by one. 
\bigskip

There are  shares of the Klingen and Siegel parabolics $P=\:^\bullet\!P$ occuring analogously. We have Eisenstein series
\begin{equation*}
 E_P(g,\phi,\Lambda)\:=\:\sum_{\gamma\in(\Gamma\cap P)\backslash\Gamma} a(\gamma g)^{\delta_P+\Lambda}\phi(\gamma g,\Lambda)\:,
\end{equation*}
for $g=namk$, where $k=k(g)\in K$ and $a=a(g)\in\: ^\bullet\!A$, $n=n(g)\in\: ^\bullet\!N$ and $m=m(g)\in\: ^\bullet\!M$.
Here $\Lambda$ belongs to $^\bullet\!\aaa_\CC^\ast\cong \CC$ and 
 $\phi$ is such that for all $g\in G$ the function $\phi(gk^{-1})$ is of the same weight and  the function $\phi(mg)$ belongs to 
a simple admissible subspace of $L_0^2((\Gamma\cap \:^\bullet\!M)\backslash \:^\bullet\!M)$. The Eisenstein series are convergent for $\re\Lambda>>0$ and have meromorphic continuation
to $\CC$ with a finite number of poles $\Lambda=c$.
The functions
\begin{equation*}
 \tilde\phi(g)\:=\:\int_{\re\Lambda=\Lambda_0} E_P(g,\phi,\Lambda)~d\Lambda
\end{equation*}
belong to $L^2(\Gamma\backslash G)$ for $\Lambda_0>>0$ and are spectrally decomposed by a residue process like above. This produces  irreducible spectral components indexed by $\Lambda=c$ and
a continuous component indexed by $\re\Lambda=0$. Translated to the whole image in $\aaa_\CC^\ast$ by adding the character terms of induction to receive the infinitesimal characters, 
these also correspond to points in $\aaa_\CC^\ast$ or a $1$-dimensional spectral component contained
 in the $2$-dimensional parameter $\re \Lambda=0$ in $\aaa_\CC^\ast$.

The share of the parabolic group G itself is given by  discrete series representations.
So we have
\begin{equation}\label{eqn_spectral_decompo}
 L^2(\Gamma\backslash G)\:=\:  L^2_{cont}(\Gamma\backslash G)
\:\bigoplus_\Lambda\: L^2_\Lambda(\Gamma\backslash G)\:.
\end{equation}
Here $L^2_\Lambda(\Gamma\backslash G)$ denotes the dirct sum of the isotypical components of irreducible unitary representations 
with infinitesimal characters $\Lambda=(\Lambda_1,\Lambda_2)$.
And the continuous spectrum $L^2_{cont}(\Gamma\backslash G)$  decomposes into
\begin{equation}\label{eqn_cont_spectral_decompo}
 L^2_{cont}(\Gamma\backslash G) \:=\:  L^2_{\re\Lambda=0}(\Gamma\backslash G)
\:\bigoplus_{\gamma\in\Sigma^+\!,\:c} \:L^2_{\gamma,c}(\Gamma\backslash G) \:.
\end{equation}
We fix a description in coordinates.
\begin{lem}\label{lemma_1_dim_sectrum}
There are finitely many real values $0\leq c\leq 1$ such that the
$1$-dimensional components of the continuous spectrum in $L^2(B)$ are given in
Langlands coordinates $\Lambda\in\aaa_\CC^\ast$ by
\begin{eqnarray*}
 K_{\alpha_1}(c)&=& \frac{c}{2}(0,2)+i\RR(1,0)\:,\\
 K_{\alpha_2}(c)&=& \frac{c}{2}(1,-1)+i\RR(1,1)\:,\\
K_{\alpha_1 +\alpha_2}(c)&=& \frac{c}{2}(1,1)+i\RR(1,-1)\:,\\
K_{\alpha_1+2\alpha_2}(c)&=& \frac{c}{2}(2,0)+i\RR(0,1)\:.
\end{eqnarray*}
\end{lem}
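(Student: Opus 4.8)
The statement is a direct unwinding of the general parametrization of the $1$-dimensional spectral components established in the discussion above, once the root datum of $\Sp_2(\RR)$ is substituted; there is no new analytic content. The plan is the following. First I would recall that the $1$-dimensional components of $L^2(B)$ produced by the double residue process come in two kinds: for a simple root $\alpha_j$ they are parametrized by $K_{\alpha_j}(c)=\frac{c}{2}\alpha_j+i(^\bullet\aaa)^\ast$ with $^\bullet\aaa=\ker\alpha_j$, and for a non-simple positive root $\gamma$ one writes $\gamma=s\alpha_j$ with $s\in W$ and sets $K_\gamma(c)=sK_{\alpha_j}(c)$, which is legitimate by the functional equation $E_B(g,\phi,\Lambda)=E_B(g,M(s,\Lambda)\phi,s\Lambda)$. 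The finiteness of the admissible values $c\in[0,1]$ is exactly the finiteness of the poles $^\dagger\Lambda=c$ of the rank-one scattering operators $M(s_{\alpha_i},{}^\dagger\Lambda)$ together with the bound $0\le c\le 1$, both of which were already obtained above; this part needs no further argument.

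Next I would treat the two simple roots explicitly. Under the Euclidean identification $\aaa_\CC^\ast\cong\CC^2$ with $\Sigma^+=\{\alpha_1=(0,2),\,\alpha_2=(1,-1),\,\alpha_1+\alpha_2=(1,1),\,\alpha_1+2\alpha_2=(2,0)\}$, the subspace $^\bullet\aaa=\ker\alpha_j$ is identified with the orthogonal complement $\alpha_j^\perp$. For $\alpha_1=(0,2)$ this is $\RR(1,0)$, so $K_{\alpha_1}(c)=\frac{c}{2}(0,2)+i\RR(1,0)$; for $\alpha_2=(1,-1)$ it is $\RR(1,1)$, so $K_{\alpha_2}(c)=\frac{c}{2}(1,-1)+i\RR(1,1)$.

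Finally, for the two non-simple positive roots I would use that $W$ preserves root lengths: $\alpha_1$ and $\alpha_1+2\alpha_2$ are the long roots while $\alpha_2$ and $\alpha_1+\alpha_2$ are the short ones, so necessarily $\alpha_1+\alpha_2=s_{\alpha_1}\alpha_2$ and $\alpha_1+2\alpha_2=s_{\alpha_2}\alpha_1$, which one checks at once from $\check{\alpha_1}=(0,1)$, $\check{\alpha_2}=(1,-1)$ and $s_{\alpha_i}\beta=\beta-\langle\beta,\check{\alpha_i}\rangle\alpha_i$. Since $s_{\alpha_1}$ acts on $\CC^2$ by $(\Lambda_1,\Lambda_2)\mapsto(\Lambda_1,-\Lambda_2)$ and $s_{\alpha_2}$ by the coordinate swap $(\Lambda_1,\Lambda_2)\mapsto(\Lambda_2,\Lambda_1)$, applying $s_{\alpha_1}$ to $K_{\alpha_2}(c)$ gives $\frac{c}{2}(1,1)+i\RR(1,-1)=K_{\alpha_1+\alpha_2}(c)$, and applying $s_{\alpha_2}$ to $K_{\alpha_1}(c)$ gives $\frac{c}{2}(2,0)+i\RR(0,1)=K_{\alpha_1+2\alpha_2}(c)$, as claimed.

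The only place that requires care — and hence the ``main obstacle'', modest as it is — is keeping the identification of $\aaa$ with $\aaa_\CC^\ast$ and the action of the simple reflections on the imaginary direction $i(^\bullet\aaa)^\ast$ consistent with the Langlands coordinates $\Lambda=(\Lambda_1,\Lambda_2)$ fixed above; once the conventions for coroots and for $^\bullet\aaa\perp{}^\dagger\aaa$ are pinned down, the computation is purely formal and appeals only to the cited theory of Eisenstein series.
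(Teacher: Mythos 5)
Your proposal is correct and follows exactly the route the paper intends: the lemma is stated as a coordinate summary of the preceding residue analysis of $L^2(B)$, with the simple-root cases read off from $K_{\alpha_j}(c)=\frac{c}{2}\alpha_j+i({}^\bullet\aaa)^\ast$ and the non-simple cases obtained via $K_{s\alpha_j}(c)=sK_{\alpha_j}(c)$, finiteness and $0\le c\le 1$ coming from the poles of the rank-one scattering operators. Your explicit verification of $\alpha_1+\alpha_2=s_{\alpha_1}\alpha_2$ and $\alpha_1+2\alpha_2=s_{\alpha_2}\alpha_1$ and of the reflections' action on the four lines matches the paper's conventions, so there is nothing to add.
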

By the functional equations of Eisenstein series, one can restrict these para\-me\-tri\-za\-tions to a half-line in $i\RR$. But as the above description is sufficiently exact for the following,
we stick to this simple notation.
\begin{prop}\label{prop_auftreten_kontinuierliches spektrum}
The $1$-dimensional continuous component $K_{\alpha_1}(1)=(i\RR,1)$ actually occurs in $L^2(\Gamma\backslash G)$.
\end{prop}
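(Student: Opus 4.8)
The plan is to exhibit an explicit function in $L^2(\Gamma\backslash G)$ whose spectral decomposition has nontrivial projection onto the component parametrized by $K_{\alpha_1}(1) = (i\RR, 1)$. Recall that $\alpha_1 = (0,2)$ is the long simple root, and the associated standard parabolic $^\bullet\!P$ is the Siegel parabolic, with $^\bullet\!M$ of type $\SL_2(\RR)\rtimes Z_2$. The pole hyperplane $H_{\alpha_1}(1) = \{\Lambda \mid \langle \Lambda, \check{\alpha_1}\rangle = 1\}$ arises from a pole of the inner Eisenstein series $^\bullet\!E(g,\phi,{^\dagger\!\Lambda})$ — equivalently of the scattering operator $M(s_{\alpha_1},{^\dagger\!\Lambda})$ — at $^\dagger\!\Lambda = \frac{1}{2}\alpha_1$. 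Since $^\bullet\!M$ essentially contains a copy of $\SL_2(\RR)$ and the inner Eisenstein series is (up to the finite $Z_2$-action) a classical $\SL_2$-Eisenstein series, what I want is precisely the statement that this $\SL_2$-Eisenstein series has a pole at its rightmost point, i.e. at the parameter corresponding to the constant function. This is the classical fact that the $\SL_2(\ZZ)$ (or $\Gamma\cap\,^\bullet\!M$) Eisenstein series $E(z,s)$ has a simple pole at $s=1$ with constant residue — the residue being, up to volume normalization, the constant function on $(\Gamma\cap\,^\bullet\!M)\backslash\,^\bullet\!M$.

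First I would make precise, via the factorization identity (\ref{E-reihen-zerlegung}), that a pole of $^\bullet\!E$ at $^\dagger\!\Lambda = \frac{1}{2}\alpha_1$ forces a pole of $E_B(g,\phi,\Lambda)$ along the full hyperplane $H_{\alpha_1}(1)$, and hence, by the residue process described in the text, the appearance of a genuine spectral component indexed by $K_{\alpha_1}(1)$ in $L^2_{cont}(\Gamma\backslash G)$ — provided the relevant residue is not identically zero. Second, I would identify the residue: taking $V_M$ the trivial admissible subspace and $\phi$ the constant section (of the correct scalar $K$-type, which exists because the $K$-type $(\kappa,\dots,\kappa)$ is one-dimensional), the inner Eisenstein series $^\bullet\!E$ becomes the standard Eisenstein series on $^\bullet\!M$ induced from the parabolic $^\dagger\!P$, and its residue at $^\dagger\!\Lambda = \frac{1}{2}\alpha_1$ is the constant function on $(\Gamma\cap\,^\bullet\!M)\backslash\,^\bullet\!M$ times a nonzero volume constant. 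Third, I would conclude that the outer integral over $^\bullet\!\Lambda = i\,^\bullet\!\aaa^\ast$ of this residue produces a nonzero element of $L^2_{\alpha_1,1}(\Gamma\backslash G)$; orthogonality of the distinct pieces in (\ref{eqn_spectral_decompo})–(\ref{eqn_cont_spectral_decompo}) then shows this element cannot lie in any other component, so $K_{\alpha_1}(1)$ genuinely occurs.

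The main obstacle is verifying nonvanishing of the residue at the relevant point, i.e. that the pole of the inner $\SL_2$-type Eisenstein series at its rightmost spectral point is actually present for the group $\Gamma\cap\,^\bullet\!M$ and not killed by the finite $Z_2$-extension or by the choice of $K$-type. This I would handle by reducing to the constant term: the constant term of $^\bullet\!E(g,\phi,{^\dagger\!\Lambda})$ along $^\dagger\!P$ is ${^\dagger a}(g)^{{^\dagger\!\delta}+{^\dagger\!\Lambda}}\phi + {^\dagger a}(g)^{{^\dagger\!\delta}-{^\dagger\!\Lambda}} M(s_{\alpha_1},{^\dagger\!\Lambda})\phi$, and the Maass–Selberg / Langlands inner-product formula shows that a pole of $M(s_{\alpha_1},\cdot)$ in the range $0 < c \le 1$ at $c=1$ with residue the constant function is forced by square-integrability of the residue — concretely, $M(s_{\alpha_1},{^\dagger\!\Lambda})$ for the trivial character is the completed zeta ratio $\xi(2{^\dagger\!\Lambda})/\xi(2{^\dagger\!\Lambda}+1)$ (suitably normalized on the congruence subgroup, with the finite $M$-factor contributing only a nonzero scalar), which has a simple pole at $2{^\dagger\!\Lambda}=1$ with nonzero residue. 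Since $\langle \delta, \check{\alpha_1}\rangle = 1$, this is exactly the claim that the Siegel-parabolic contribution places a one-dimensional continuous component on the line $(i\RR,1)$, proving the proposition.
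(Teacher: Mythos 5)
Your argument is essentially the paper's own proof: factor the Borel Eisenstein series (with $\phi\equiv 1$) through the maximal parabolic whose split component is $\ker(\alpha_1)$, invoke the classical simple pole at the edge of the inner $\SL_2(\RR)$-Eisenstein series with constant, nonzero residue, and conclude that the residue along $H_{\alpha_1}(1)$ --- hence the spectral component $K_{\alpha_1}(1)$ --- is nonzero; your added detail on the $\xi$-ratio and on orthogonality of the spectral pieces is consistent with this. One correction: in the paper's conventions the parabolic with split component $^\bullet\aaa=\ker(\alpha_1)$ is the \emph{Klingen} parabolic, with $^\bullet M\cong\SL_2(\RR)\times Z_2$ (the Siegel parabolic has split component $\ker(\alpha_2)$); since you consistently work with $^\dagger\Lambda\in\CC\alpha_1$ and with the hyperplane $H_{\alpha_1}(1)$ this is only a naming slip, but an actual factorization through the Siegel parabolic would place the pole on hyperplanes $H_{\alpha_2}(c)$ and would not yield the component $(i\RR,1)$.
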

\begin{proof}
 In the Eisenstein series $E_B(g,\phi,\Lambda)$ for $B$ we choose $\phi\equiv 1$.
We use (\ref{E-reihen-zerlegung}) for the Klingen parabolic, i.e. the parabolic $^\bullet\!P$ with split component $^\bullet\!\aaa=\ker(\alpha_1)$ and
$^\bullet\!M\cong \SL_2(\RR)\times Z_2$. 
The inner Eisenstein series 
\begin{equation*}
 ^\bullet\!E(g,\phi,\:^\dagger\!\Lambda)\:=\:\sum_{\bar\gamma\in(\Gamma\cap \:^\dagger\!P)\backslash 
(\Gamma\cap\:^\bullet\!M)}\:
^\dagger a(\bar\gamma g)^{1+\:^\dagger\!\Lambda}\phi(\bar\gamma g) \:,
\end{equation*}
where $^\dagger\!\Lambda=\Lambda_2$, is a $\mathop{SL}_2(\RR)$-Eisenstein series which has got a simple pole in $^\dagger\!\Lambda=1$.
So the residue
\begin{equation*}
 \mathop{res}\nolimits_{^\dagger\!\Lambda=1}E_B(g,1,\Lambda)\:=\:
 \sum_{\gamma\in(\Gamma\cap\:^\bullet\!P)\backslash \Gamma}\:^\bullet\!a(\gamma g)^
{^\bullet\!\delta+\:^\bullet\!\Lambda}
\mathop{res}\nolimits_{^\dagger\!\Lambda=1}
\:^\bullet\!E(\gamma g,\phi,\:^\dagger\!\Lambda)
\end{equation*}
is nonzero, as well is the residue $\mathop{res}\nolimits_{^\dagger\!\Lambda=1}M(s_{\alpha_2},\Lambda)$ of the scattering operator. In consequence,
the $L^2(\Gamma\backslash G)$-function
\begin{equation*}
 \int_{\re\Lambda=\Lambda_0}E_B(g,1,\Lambda)~d\Lambda
\end{equation*}
has a non-zero spectral component at $K_{\alpha_1}(1)$. 
\end{proof}

\section{Resolvents}\label{sec_resolvents}
Let $\Sigma=\Sigma^+\cup(-\Sigma^+)$ be the complete system of roots.
For  $\alpha\in\Sigma$ let $\check\alpha$ denote its coroot. Let $u$ and $v$ be complex variables.
Define 
\begin{empheq}[box=\fbox]{align}\label{wahre_gestalt_von_Dplusminus}
    \begin{split}
      D_+(u,\Lambda) &=  \prod_{\alpha\in\Sigma\textrm{ long}} \bigl(\check{\alpha}(\Lambda)-u\bigr)\:,\\
      D_-(v,\Lambda)  &= \prod_{\alpha\in\Sigma\textrm{ short}} \bigl(\check{\alpha}(\Lambda)-v\bigr)\:.
    \end{split}
  \end{empheq} 
In terms of the chosen basis above, for $\Lambda=(\Lambda_1,\Lambda_2)\in\aaa_\CC^\ast$ we have
\begin{eqnarray*}
 D_+(u,\Lambda)&=& (\Lambda_1^2-u^2)(\Lambda_2^2-u^2)\:,\\
 D_-(v,\Lambda)&=& \bigl((\Lambda_1+\Lambda_2)^2-v^2\bigr)\bigl((\Lambda_1-\Lambda_2)^2-v^2\bigr)\:.
\end{eqnarray*}
By Cor.~\ref{harish-chandra}, $D_+(u,\Lambda)$, respectively $D_-(v,\Lambda)$, is the image of 
\begin{eqnarray}
 D_+(u)&:=& \frac{1}{2}\bigl(C_1^2-C_2+11C_1-2(u^2-1)C_1+2(u^2-1)(u^2-4)\bigr)\:,\label{Dplus_in_koordinaten}\\
D_-(v)&:=& 2C_2-C_1^2-34C_1-2(v^2-9)C_1+(v^2-9)(v^2-1)\:,\label{Dminus_in_koordinaten}
\end{eqnarray}
respectively, under the Harish-Chandra homomorphism.
In section~\ref{sec_casimir_action_on_P} these Casimir operators will turn out to be the right choice for applications to the 
Poincar\'e series. For this, we distinguish between the variables $u$ and $v$, 
although this is redundant for this paragraph.

Being Casimir operators, $D_+(u)$ and $D_-(v)$ respect the spectral decomposition of $L^2(\Gamma\backslash G)$
(see (\ref{eqn_spectral_decompo}), (\ref{eqn_cont_spectral_decompo}) of section~\ref{section_spectrum}).
 So $D_+(u,\Lambda)$ and $D_-(v,\Lambda)$ describe their action on the spectral component parametrized by $\Lambda$.
We use this to study the existence of their  resolvents $R_+(u)$ and $R_-(v)$
in the variable $u$, respectively $v$.
For the existence of the resolvent $R_+(u)$ in some point $u\in \CC$, we have to check that $D_+(u)^{-1}$ exists and is 
bounded (see~\cite{hirzebruch-scharlau}, for example).
 Whenever defined for all $u$ in an open set $U\subset\CC$, the resolvent $R_+(u)$
  is analytic on $U$. (Analogously for $R_-(v)$.)
\bigskip

{\it The discrete spectrum.}
Let $\Lambda=(\Lambda_1,\Lambda_2)$ be the  parameter of a fixed discrete constituent  
$L^2_\Lambda(\Gamma\backslash G)_\kappa$.
Then $D_+(u,\Lambda)$ is a polynomial of degree four in $u$. As such it has at most four zeros.
Apart from these zeros,
\begin{equation*}
 R_+(u)\::=\:\frac{1}{(\Lambda_1^2-u^2)(\Lambda_2^2-u^2)}
\end{equation*}
is a constant, thus bounded, operator. While at a zero of $D_+(u,\Lambda)$, the resolvent $R_+(u)$ obviously has a pole.
The resolvent $R_-(v)$ is dealed with completely analogously. 
As there are only finitely many irreducible constituents, we get:
\begin{prop}\label{prop_discrete_spec}
On the discrete spectrum $\oplus_\Lambda L^2_\Lambda(\Gamma\backslash G)$, the resolvents $R_+(u)$ and $R_-(v)$
exist as meromorphic functions in the complex variable $u$ (resp. $v$), each having only a finite number of poles.
\end{prop}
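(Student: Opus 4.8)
The plan is to reduce the whole statement to the scalar action of the central operators $D_+(u)$, $D_-(v)$ on isotypic components, and then to use admissibility to control how many components matter. First I would recall from the spectral decomposition~(\ref{eqn_spectral_decompo}) that the discrete part $\bigoplus_\Lambda L^2_\Lambda(\Gamma\backslash G)$ is a Hilbert direct sum of isotypic components of irreducible unitary representations with infinitesimal character $\Lambda=(\Lambda_1,\Lambda_2)\in\aaa_\CC^\ast$. Since the resolvents $R_+(u)$, $R_-(v)$ are only ever applied to Poincar\'e series of the fixed scalar $K$-type $(\kappa,\kappa)$, it suffices to work inside the closed $G$-stable subspace spanned by those constituents which actually contain that $K$-type. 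Because $\Gamma$ is a lattice, the multiplicity of a fixed $K$-type in the discrete spectrum is finite; hence only finitely many $\Lambda$ are relevant, which is exactly the finiteness input the proposition needs.

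Next, on each such isotypic component $L^2_\Lambda(\Gamma\backslash G)_\kappa$ the center $\zz_\CC$ acts by the character determined by $\Lambda$, so by Corollary~\ref{harish-chandra} together with~(\ref{Dplus_in_koordinaten}),~(\ref{Dminus_in_koordinaten}) the operators act by the scalars
\[
 D_+(u)\restr{L^2_\Lambda}=(\Lambda_1^2-u^2)(\Lambda_2^2-u^2),\qquad
 D_-(v)\restr{L^2_\Lambda}=\bigl((\Lambda_1+\Lambda_2)^2-v^2\bigr)\bigl((\Lambda_1-\Lambda_2)^2-v^2\bigr),
\]
each a polynomial of degree four in $u$ (respectively $v$). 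Thus $D_+(u)\restr{L^2_\Lambda}$ is invertible precisely when $D_+(u,\Lambda)\neq 0$, and then $R_+(u)\restr{L^2_\Lambda}$ is multiplication by the constant $1/D_+(u,\Lambda)$, which is a bounded operator; at each of the at most four zeros of $D_+(\cdot,\Lambda)$ it has a pole. So on each relevant $L^2_\Lambda$ the resolvent exists and is a meromorphic (scalar-, hence operator-valued) function with at most four poles, and the same reasoning applies verbatim to $R_-(v)$ with $D_-(v,\Lambda)$.

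Finally I would assemble the global resolvent on the discrete spectrum as the orthogonal direct sum of these finitely many scalar operators. Away from the finite set $\bigcup_\Lambda\{\pm\Lambda_1,\pm\Lambda_2\}$ (respectively $\bigcup_\Lambda\{\pm(\Lambda_1+\Lambda_2),\pm(\Lambda_1-\Lambda_2)\}$), the supremum over the finitely many $\Lambda$ of $|1/D_\pm|$ is finite, so the direct sum is a bounded operator depending meromorphically on $u$ (resp. $v$), with pole set the union of finitely many four-element sets — in particular finite. There is no real obstacle here; the only point needing a word is that the componentwise inverses glue to a \emph{bounded} operator, which is immediate from the finiteness of the relevant constituents (and in any case the scalars $1/D_\pm(u,\Lambda)$ decay as $\|\Lambda\|\to\infty$, so boundedness is never the issue). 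The genuine difficulties — the existence of the resolvents in the presence of continuous spectrum — are precisely what the next propositions address.
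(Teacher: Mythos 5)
Your proposal is correct and follows essentially the same route as the paper: restrict to a discrete constituent $L^2_\Lambda(\Gamma\backslash G)_\kappa$, observe that $D_\pm$ acts there by the scalar $D_\pm(\cdot,\Lambda)$, a quartic polynomial with at most four zeros, so the componentwise resolvent is a bounded (constant) operator away from those zeros and has poles exactly there, and then invoke finiteness of the relevant constituents to bound the total pole set. The only difference is cosmetic: the paper simply asserts ``there are only finitely many irreducible constituents,'' whereas you additionally spell out why the componentwise inverses assemble to a bounded operator and offer a justification (via the fixed scalar $K$-type) for the finiteness input.
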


{\it The $2$-dimensional continuous spectrum.}
Next we look at the case of the $2$-dimensional continuous spectrum, which is para\-me\-trized by $\re \Lambda =0$ 
(including $1$-dimensional continuous components occuring in $L^2(P)$, where $P$ is a Siegel or Klingen parabolic).
So we may assume $\Lambda=(it_1,it_2)$ for real $t_1,t_2$. Then
\begin{equation*}
 D_+(u)\:=\:(t_1^2+u^2)(t_2^2+u^2)
\end{equation*}
is a polynomial in $t_1,t_2$ which 
never vanishes if we assume $\re u>0$. Especially, the minimum
\begin{equation*}
 c(u)\::=\:\min_{t_1,t_2\in\RR}\lvert D_+(u)\rvert\:>\:0
\end{equation*}
exists. Thus, the inverse $D_+(u)^{-1}$ is bounded by $c(u)^{-1}$, and the resolvent $R_+(u)$ exists on the $2$-dimensional 
continuous spectrum as long as $\re u>0$.
But for $\re u=0$ we get zeros $(t_1,t_2)$ of $D_+(u)$, which are very difficult to study and which may  
produce essential singularities of the resolvent.
Similarly, for $D_-(v)=\bigl((t_1+t_2)^2+v^2\bigr)\bigl((t_1-t_2)^2+v^2\bigr)$ the resolvent $R_-(v)$ exists as long as $\re v>0$.
To sum up:
\begin{prop}\label{prop_cont_spec_2-dim}
On the $2$-dimensional continuous spectrum $L^2_{\re\Lambda=0}(\Gamma\backslash G)$, 
the resolvent $R_+(u)$ (respectively $R_-(v)$) is an analytic function for $\re u>0$ (respectively $\re v>0$).
\end{prop}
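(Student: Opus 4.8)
The plan is to exploit that, on the $2$-dimensional continuous spectrum $L^2_{\re\Lambda=0}(\Gamma\backslash G)$, the central element $D_+(u)$ acts as a multiplication operator. The spectral decomposition (\ref{eqn_spectral_decompo}), (\ref{eqn_cont_spectral_decompo}) presents this component as an integral over the parameter $\Lambda$ with $\re\Lambda=0$, on which $D_+(u)\in\zz_\CC$ acts fibrewise through its Harish-Chandra image $D_+(u,\Lambda)$ of Cor.~\ref{harish-chandra}. Writing $\Lambda=(it_1,it_2)$ with $t_1,t_2\in\RR$ and substituting $\Lambda_j^2=-t_j^2$ into $D_+(u,\Lambda)=(\Lambda_1^2-u^2)(\Lambda_2^2-u^2)$ gives $D_+(u,\Lambda)=(t_1^2+u^2)(t_2^2+u^2)$, so that $R_+(u)$ is simply multiplication by the reciprocal $D_+(u,\Lambda)^{-1}$. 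Its operator norm equals the supremum over the spectral support of $|D_+(u,\Lambda)^{-1}|$, and the whole proposition reduces to controlling that supremum.

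First I would prove pointwise nonvanishing for $\re u>0$. A factor $t^2+u^2$ vanishes iff $u^2=-t^2$, i.e. iff $u^2$ is real and $\le 0$; since $t$ is real this forces $u$ to be real or purely imaginary. In the first case $u^2>0\ne -t^2$ once $u\ne 0$, and in the second case $\re u=0$; either way $\re u>0$ is impossible. Hence each factor, and therefore $D_+(u,\Lambda)$, is nonzero for every real pair $(t_1,t_2)$ as soon as $\re u>0$.

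The main step is upgrading this to a uniform positive lower bound, which is the real content of the statement. For fixed $u$ with $\re u>0$ the function $t\mapsto |t^2+u^2|$ is continuous, strictly positive by the previous step, and tends to $+\infty$ as $|t|\to\infty$; it is therefore proper and attains a positive minimum $m_u>0$. Consequently $|D_+(u,\Lambda)|=|t_1^2+u^2|\,|t_2^2+u^2|\ge m_u^2$, so that $c(u):=\inf_{t_1,t_2}|D_+(u,\Lambda)|\ge m_u^2>0$. This is the crucial point: bare nonvanishing would only make $D_+(u)$ injective with dense range, whereas the uniform bound $c(u)>0$ is exactly what forces the inverse multiplication operator to be \textbf{bounded}, with $\|R_+(u)\|\le c(u)^{-1}$, so that the resolvent genuinely exists (cf.~\cite{hirzebruch-scharlau}).

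Finally, for analyticity in $u$, note that for each fixed $\Lambda$ the scalar $D_+(u,\Lambda)^{-1}$ is holomorphic on $\{\re u>0\}$, while the bound $c(u)^{-1}$ is locally uniform there by the same properness argument applied on compact subsets. Holomorphy of $u\mapsto\langle R_+(u)x,y\rangle=\int D_+(u,\Lambda)^{-1}\,d\mu_{x,y}(\Lambda)$ follows by dominated convergence against the spectral measure, and weak holomorphy together with local boundedness yields strong operator-norm analyticity by Dunford's theorem. The argument for $D_-(v)$ is identical after the linear change of coordinates $(t_1,t_2)\mapsto(t_1+t_2,\,t_1-t_2)$, which turns $D_-(v,\Lambda)$ into the same product form $((t_1+t_2)^2+v^2)((t_1-t_2)^2+v^2)$ and gives $\|R_-(v)\|\le c(v)^{-1}$ for $\re v>0$. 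The genuine difficulty—deliberately left outside the claimed region—is the line $\re u=0$, where the factors acquire real zeros at $t=\pm\,\im u$, the uniform lower bound collapses, and essential singularities of $R_+$ may appear.
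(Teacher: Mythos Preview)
Your argument is correct and follows essentially the same route as the paper: parametrize $\Lambda=(it_1,it_2)$, observe that $D_+(u,\Lambda)=(t_1^2+u^2)(t_2^2+u^2)$ is nowhere zero for $\re u>0$, extract a uniform positive lower bound $c(u)$ by properness, and conclude that $R_+(u)$ is bounded by $c(u)^{-1}$ (similarly for $D_-$). You have simply made explicit the details the paper leaves implicit, in particular the properness argument for the existence of the minimum and the passage from local boundedness to operator-norm analyticity via Dunford's theorem.
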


{\it The $1$-dimensional continuous spectrum.}
For the remaining $1$-dimensional components of the continuous spectrum parametrized by $K_\gamma(c)$ 
(see Lemma~\ref{lemma_1_dim_sectrum}), we first determine the zeros of $D_+(u)$ (respectively $D_-(v)$) on $K_\gamma(c)$.
For a fixed $u$ (respectively $v$) we denote by
\begin{eqnarray*}
 N_+(u)&=&\{\Lambda\in\aaa_\CC^\ast\vert D_+(u,\Lambda)=0\}\:,\\
N_-(v)&=&\{\Lambda\in\aaa_\CC^\ast\vert D_-(v,\Lambda)=0\}\:
\end{eqnarray*}
the set of zeros of $D_+(u,\Lambda)$ (respectively $D_-(v,\Lambda)$) in $\aaa_\CC^\ast$ (see Figure~\ref{bild_nst_von_D+_und_D-}).
\begin{figure}
\begin{tikzpicture}
 \draw [->](0,-3) -- (0,5.4);
 \draw (-0.25,5.4) node {$\Lambda_2$};
 \draw [->,style=thick] (0,0)--(0,4);
 \draw (0.25,3.9) node {$\alpha_1$};
\draw [->](-3,0) -- (5.4,0);
\draw (5.4,-0.3) node {$\Lambda_1$};
\draw [->,style=thick] (0,0)--(2,-2);
\draw (2.25,-2) node {$\alpha_2$};

\draw (4,2)  circle (1pt);
\draw  (3.8,2.1) node {$\delta$};
\filldraw [lightgray] (5.4,2)--(4,2) -- (5.4,3.4);
\draw [dashed](-3,1) --(5.4,1);
\draw [dashed](-3,-1) -- (5.4,-1);
\draw (1.5,5) node {\small $N_+(c)$};
\draw [dashed](1,-3) -- (1,5.4);
\draw [dashed](-1,-3) -- (-1,5.4);
\draw [dotted](-3,-1.2)--(3.6,5.4);
\draw [dotted](-1.2,-3)--(5.4,3.6);
\draw [dotted](-3,1.2)--(1.2,-3);
\draw (-2.9,5.0) node {$N_-(c')$};
\draw [dotted](4.8,-3)--(-3,4.8);

\draw (0,1) node {$\times$};
\draw (-0.55,0.7) node {\small $K_{\alpha_1}(c)$};
\draw (0,1.8) node {$\times$};
\draw (-0.55,2) node {\small $K_{\alpha_1}(c')$};
\end{tikzpicture}
\caption{  \textit{Examples for the zeros  $N_+(u)$ and $N_-(v)$ within $\mathfrak a^\ast$, 
and their intersections with parts of the $1$-dimensional continuous spectrum (real image).}}
\label{bild_nst_von_D+_und_D-}
\end{figure}
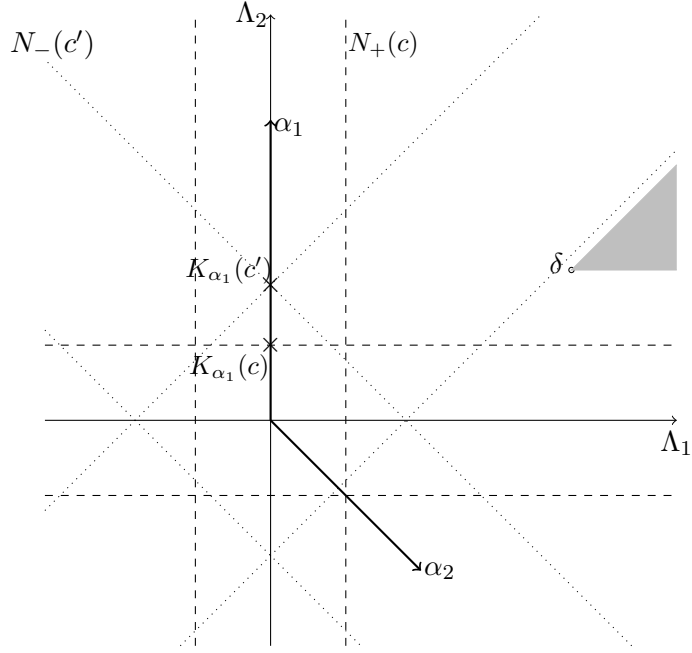
Having this restriction by Proposition~\ref{prop_cont_spec_2-dim} nevertheless, we restrict to   $\re u>0$, $\re v>0$. 
We easily find:
\begin{eqnarray*}
 K_{\alpha_1}(c)\cap N_+(u)&=&\left\{\begin{array}{ll}K_{\alpha_1}(c),&\textrm{ if } u= c\\
                                \emptyset, &\textrm{ else }
                               \end{array}
\right.\\
 K_{\alpha_2}(c)\cap N_+(u)&=&\left\{\begin{array}{ll}
\{\frac{c}{2}(1,-1)\pm iy(1,1)\},&\textrm{ if } u= \frac{c}{2}+iy\\
                                \emptyset, &\textrm{ else }
                               \end{array}
\right.\\
 K_{\alpha_1+\alpha_2}(c)\cap N_+(u)&=&\left\{\begin{array}{ll}
\{\frac{c}{2}(1,1)\pm iy(1,-1)\},&\textrm{ if } u= \frac{c}{2}+iy\\
                                \emptyset, &\textrm{ else }
                               \end{array}
\right.\\
 K_{\alpha_1+2\alpha_2}(c)\cap N_+(u)&=&\left\{\begin{array}{ll}K_{\alpha_1+2\alpha_2}(c),&\textrm{ if } u= c\\
                                \emptyset, &\textrm{ else }
                               \end{array}
\right.
\end{eqnarray*}
\begin{eqnarray*}
 K_{\alpha_1}(c)\cap N_-(v)&=&\left\{\begin{array}{ll}
\{\frac{c}{2}(0,2)\pm iy(1,0)\},&\textrm{ if } v= c+iy\\
                                \emptyset, &\textrm{ else }
                               \end{array}
\right.\\
 K_{\alpha_2}(c)\cap N_-(v)&=&\left\{\begin{array}{ll}
K_{\alpha_2}(c),&\textrm{ if } v= c\\
                                \emptyset, &\textrm{ else }
                               \end{array}
\right.\\
 K_{\alpha_1+\alpha_2}(c)\cap N_-(v)&=&\left\{\begin{array}{ll}
K_{\alpha_1+\alpha_2}(c),&\textrm{ if } v= c\\
                                \emptyset, &\textrm{ else }
                               \end{array}
\right.\\
 K_{\alpha_1+2\alpha_2}(c)\cap N_-(v)&=&\left\{\begin{array}{ll}
\{\frac{c}{2}(2,0)\pm iy(0,1)\},&\textrm{ if } v= c+iy\\
                                \emptyset, &\textrm{ else }
                               \end{array}
\right.
\end{eqnarray*}
We discuss  the  component $L^2_{\alpha,c}(\Gamma\backslash G)_\kappa$ parametrized 
by the $1$-dimensional set
\begin{equation*}
 K_{\alpha_1}(c)\:=\:(i\RR,c)
 \end{equation*}
in detail. For this we may assume $\Lambda=(it,c)$. Then,
\begin{equation*}
 D_+(u,\Lambda)\:=\:-(c^2-u^2)(t^2+u^2)
\end{equation*}
is zero for $\re u>0$ if and only if $u= c$.
As the polynomial $t^2+u^2$ does not vanish for any $u$ with $\re u>0$, the minimum
\begin{equation*}
 m_+(u)\::=\:\min_{t\in\RR}\lvert t^2+u^2\rvert \:>\:0
\end{equation*}
exists. Thus, 
\begin{equation*}
 \lvert D_+(u,\Lambda)\rvert \:\geq\:\lvert c^2-u^2\rvert\cdot m_+(u)
\end{equation*}
is bounded away from zero if $u\not=c$.
But $(c^2-u^2)R_+(u)$ is an operator on this component which is bounded by $m_+(u)^{-1}$, and which differs from $R_+(u)$ by
the constant $(c^2-u^2)$, 
so the resolvent $R_+(u)$ itself just has a pole of order one in $u=c$.
Indeed, applying $R_+(u)$ to $\phi$ in the scalar product fomular (\ref{allgemeine_skalarproduktformel})
we get a share in the $K_{\alpha_1}(c)=(i\RR,c)$-component of
\begin{equation*}
 \frac{1}{u^2-c^2}
  \int_{\RR}\frac{1}{t^2+u^2}\mathop{res}\nolimits_{\Lambda_2=\frac{c}{2}}  f(it+\Lambda_2)~dt\:,
\end{equation*}
which has a simple pole in $c=u$ and is bounded else.

In contrast,  the  operator 
\begin{equation*}
 D_-(v,(it,c))\:=\:\bigl((c+it)^2-v^2\bigr)\bigl((c-it)^2-v^2\bigr)
\end{equation*}
has the zeros  $v\in c+i\RR$ in case $\re v>0$.
Within $\CC$, this set is a real line which we cannot bypass. The resolvent $R_-(v)$ cannot be established 
 as a meromorphic function for 
$\re v\leq c$. But for $\re v >c$ the minimum
\begin{equation*}
 m_-(v)\::=\:\min_{t\in\RR}\left\vert \bigl((c+it)^2-v^2\bigr)\bigl((c-it)^2-v^2\bigr)\right\vert \:>\:0
\end{equation*}
exists. So does the resolvent, being bounded by $m_-(v)^{-1}$.

One easily checks that these two cases essentially describe the other $1$-di\-men\-sio\-nal components:
If the intersection $K_\gamma(c)\cap N_+(u)$ (resp. $K_\gamma(c)\cap N_-(v)$) is all of $K_\gamma(c)$, then the resolvent
$R_+(u)$ (resp. $R_-(v)$) is meromorphic on $\re u>0$ (resp. $\re v>0$) with a single pole of order one in 
$u=c$ (resp. $v=c$).
But if the intersection $K_\gamma(c)\cap N_+(u)$ (resp. $K_\gamma(c)\cap N_-(v)$) consists of at most two points,
then on the line $\re u=\frac{c}{2}$ (resp. $\re v=c$)  the resolvent
$R_+(u)$ (resp. $R_-(v)$) has singularities we cannot deal  with.

\begin{prop}\label{prop_cont_spec_1-dim}
On  the $1$-dimensional continuous spectral components of $L^2(B)$, the resolvent $R_+(u)$ exists for $\re u>\frac{1}{2}$
as a meromorphic function having  poles of order one for finitely many values of $u$, where $u=1$ occurs. 
The resolvent $R_-(v)$ exists  for $\re v>1$ as a holomorphic function.
\end{prop}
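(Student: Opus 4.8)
The plan is to collect the work already done in the preceding discussion and organize it according to the four one-dimensional components $K_{\alpha_1}(c)$, $K_{\alpha_2}(c)$, $K_{\alpha_1+\alpha_2}(c)$, $K_{\alpha_1+2\alpha_2}(c)$ listed in Lemma~\ref{lemma_1_dim_sectrum}, using the intersection tables for $K_\gamma(c)\cap N_+(u)$ and $K_\gamma(c)\cap N_-(v)$ computed above. First I would treat $R_+(u)$. On $K_{\alpha_1}(c)$ the detailed argument given in the text shows directly that $D_+(u,(it,c))=-(c^2-u^2)(t^2+u^2)$, and since $t^2+u^2$ is bounded away from zero uniformly in $t\in\RR$ for $\re u>0$ (the minimum $m_+(u)$ is positive), the inverse of $D_+(u,\Lambda)$ on this component exists and is bounded whenever $u\neq c$, with at worst a simple pole at $u=c$. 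The case $K_{\alpha_1+2\alpha_2}(c)$ is identical since the intersection table shows $K_{\alpha_1+2\alpha_2}(c)\cap N_+(u)$ equals all of $K_{\alpha_1+2\alpha_2}(c)$ precisely when $u=c$. For $K_{\alpha_2}(c)$ and $K_{\alpha_1+\alpha_2}(c)$, the tables show the intersection with $N_+(u)$ is at most the two points $\frac{c}{2}(1,\mp1)\pm iy(1,\pm1)$ occurring only when $u=\frac{c}{2}+iy$; so for $\re u>\frac{c}{2}$ the polynomial $D_+(u,\Lambda)$ (a product of two factors, each a shifted square with nonzero real part) has a positive minimum over the parameter line, and $R_+(u)$ is holomorphic there. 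Since $0\leq c\leq 1$, all these constraints $\re u>c$ and $\re u>\frac{c}{2}$ are implied by $\re u>\frac12$; and by Proposition~\ref{prop_auftreten_kontinuierliches spektrum} the component $K_{\alpha_1}(1)$ actually occurs, so the pole at $u=1$ is genuine. Finitely many values of $c$ occur, giving finitely many poles, each of order one.

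Next I would treat $R_-(v)$ by the same bookkeeping against the $N_-(v)$ table. Here the situation is reversed: the components $K_{\alpha_2}(c)$ and $K_{\alpha_1+\alpha_2}(c)$ intersect $N_-(v)$ in all of $K_\gamma(c)$ exactly when $v=c$, so on those $R_-(v)$ is meromorphic on $\re v>0$ with a simple pole at $v=c\leq1$; while $K_{\alpha_1}(c)$ and $K_{\alpha_1+2\alpha_2}(c)$ intersect $N_-(v)$ in at most two points, occurring only when $v=c+iy$, i.e. along the vertical line $\re v=c$. As the text shows for $K_{\alpha_1}(c)$, the zero set $v\in c+i\RR$ is a full real line in $\CC$ which cannot be bypassed, so $R_-(v)$ cannot be continued meromorphically for $\re v\leq c$; but for $\re v>c$ the minimum $m_-(v)$ is positive and $R_-(v)$ exists, bounded by $m_-(v)^{-1}$. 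Since $c\leq1$ for all occurring hyperplanes, the uniform condition $\re v>1$ guarantees $R_-(v)$ is holomorphic (no pole at $v=c$ is hit because $c\le1<\re v$), which is exactly the claim.

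The routine points are the minimum-is-positive estimates and the verification that each factor of $D_\pm$ stays away from zero on the relevant parameter line — these are elementary facts about $|t^2+w^2|$ and $|(a+it)^2-w^2|$ for $\re w$ in a suitable range, already used in the text. The one genuine subtlety, and the step I expect to be the main obstacle, is making sure the bound on $D_\pm(u,\Lambda)^{-1}$ is \emph{uniform} across the (finitely many, but a priori $\Gamma$-dependent) collection of one-dimensional components and over all occurring values of $c$: one must invoke the finiteness of the set of pole hyperplanes and the bound $0\le c\le1$ from Section~\ref{section_spectrum} (including the remark that Borel subgroups from other $\Gamma$-conjugacy classes still only contribute hyperplanes with $c\le1$) to pass from a pointwise-in-$\Lambda$ bound to an operator bound that is locally bounded in $u$ (resp.\ $v$), which is what gives analyticity of the resolvent off its poles via the standard criterion in \cite{hirzebruch-scharlau}. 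Once that uniformity is in place, assembling the four cases yields the stated regions $\re u>\frac12$ and $\re v>1$ and the assertion about the pole at $u=1$.
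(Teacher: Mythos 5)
Your argument is correct and follows essentially the same route as the paper: read off from the intersection tables which components force a genuine boundary of meromorphy ($K_{\alpha_2}(c)$, $K_{\alpha_1+\alpha_2}(c)$ for $R_+$, giving $\re u>\tfrac{c}{2}$; $K_{\alpha_1}(c)$, $K_{\alpha_1+2\alpha_2}(c)$ for $R_-$, giving $\re v>c$) and which merely contribute simple poles at $u=c$ (resp.\ none for $R_-$ on $\re v>1$), then take the maximum over the finitely many $c\le 1$ and invoke the actual occurrence of $K_{\alpha_1}(1)$ for the pole at $u=1$. The only slip is the phrase ``constraints $\re u>c$'': on $K_{\alpha_1}(c)$ and $K_{\alpha_1+2\alpha_2}(c)$ there is no such constraint, only a simple pole at $u=c$ inside the region of meromorphy $\re u>0$ — but since you state the pole correctly elsewhere, this does not affect the conclusion.
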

\begin{proof}[Proof of Proposition~\ref{prop_cont_spec_1-dim}]
By the arguments above, we get a bound $b_+$ for  $R_+(u)$ to exist,
\begin{equation*}
 b_+:=\max\{\frac{c}{2}, \:c \textrm{ occuring in one of the } K_{\alpha_2}(c), K_{\alpha_1+\alpha_2}(c)\}\:.
\end{equation*}
By Lemma~\ref{lemma_1_dim_sectrum}, we have $c \leq 1$, where $c=1$ actually occurs, as $H_{\alpha_1}(1)$ is one of
the two walls margining the cone of convergence of the Eisenstein series. So $b_+=\frac{1}{2}$. 
For $\re u>\frac{1}{2}$, the resolvent $R_+(u)$ has simple poles at $u=c$, for $c$ occurring in one of the 
$K_{\alpha_1}(c)$ and $K_{\alpha_1+2\alpha_2}(c)$. 
Similarly, let
\begin{equation*}
 b_-:=\max\{c,\:c \textrm{ occuring in one of the } K_{\alpha_1}(c), K_{\alpha_1+2\alpha_2}(c)\}\:\leq\:1\:. 
\end{equation*}
Then $R_-(v)$ is holomorphic for $\re v>b_-$.
\end{proof}


\section{Poincar\'e series}\label{section_konvergenz}
\begin{defn}\label{def_allgemeine_poincarereihe}
Let the genus $m$ be arbitrary.
Let  $s_1,s_2$ be complex variables, and let $\tau$ be a positive definite $(m,m)$-matrix with half-integral entries.
Define the  Poincar\'e series 
\begin{equation*}
 \mathcal P_\tau(g,s_1,s_2) :=\sum_{\gamma\in \Gamma_\infty\backslash \Gamma} H_\tau(\gamma g,s_1,s_2)\:\:,
\end{equation*}
where
\begin{equation*}
 H_\tau(g,s_1,s_2)\::=\: \frac{\exp(2\pi i\trace(\tau z))}{j(g,i)^{\kappa}}\trace(\tau y)^{s_1}
\det (y)^{s_2}\:.
\end{equation*}
\end{defn}
Due to their exponential term we call them Poincar\'e series  of exponential type. 
They  are of weight $\kappa$. 
The function $H_\tau(g,s_1,s_2)$ is
nonholomorphic (in the variable $g$) apart from $(s_1,s_2)=(0,0)$.
Klingen~\cite{Klingen} studied very closely defined series  $P_\tau^\kappa(g)=\sum h(\gamma g)$ for
$h(g)=\exp(2\pi i\trace(\tau z))\det (y)^{\frac{\kappa}{2}}$, which  converge for $\kappa>2m$.
But in section~\ref{sec_casimir_action_on_P_geschlecht_2}   we will fix  genus $m=2$ and $\kappa=2m=4$.
So the Poincar\'e series do not converge in their point of holomorphicity.
We will use other coordinates in this case which fit better into spectral theory:
\begin{defn}\label{definition_poincare} For $m=2$ define the Poincar\'e series $P_\tau(g,u,v)$ by
\begin{equation*}
 P_\tau(g,u,v)\::=\:\mathcal P_\tau(g,s_1,s_2),
\end{equation*}
where
\begin{equation*}
 s_1\:=\:\frac{v-2u-1}{2}\quad\textrm{ and }\quad s_2\:=\:\frac{u-(\kappa-m)}{2}\:.
\end{equation*}
By abuse of notation, we usually omit the dependence on $\tau$ in our notations, i.e.
$P(g,u,v):=P_\tau(g,u,v)$.
\end{defn}
\begin{thm}\label{Konvergenzbereich} 
The series
\begin{equation*}
S_\tau(z,l_1,l_2)\::=\: \sum_{\gamma\in\Gamma_\infty\backslash \Gamma}\exp(2\pi i\trace(\tau \gamma\Hop z))
\trace(\tau\im \gamma\Hop z)^{l_1}\det(\im\gamma\Hop z)^{l_2}
\end{equation*}
converges absolutely and uniformly on compact sets in the  cone given by:
\begin{enumerate}
 \item[(a)]
$\re l_1\geq 0\:$ and  $\:\re l_2>m$
\item[(b)]
$\re l_1\leq 0\:$ and  $\:\re (l_2+ \frac{l_1}{m})>m$.
\end{enumerate}
For $(l_1,l_2)$ fixed, it is absolutely bounded by a constant independent of $\tau$ and belongs to $L^2(\Gamma\backslash G)$.
\end{thm}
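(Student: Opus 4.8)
The plan is to estimate $S_\tau$ termwise and reduce its convergence to the classical convergence of Klingen's Poincar\'e series. The first step is a reduction to real exponents: for $w=\gamma\Hop z$ with $y_\gamma:=\im w>0$ we have $|\exp(2\pi i\trace(\tau w))|=\exp(-2\pi\trace(\tau y_\gamma))$ since $\trace(\tau\re w)$ and $\trace(\tau y_\gamma)$ are real, so the modulus of the $\gamma$-th summand equals $\exp(-2\pi\trace(\tau y_\gamma))\trace(\tau y_\gamma)^{\re l_1}\det(y_\gamma)^{\re l_2}$; hence it suffices to treat real $l_1,l_2$ in regions (a) and (b), and — all estimates below being locally uniform in $z$ — uniformity on compacta follows automatically.

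Next I absorb the factor $\trace(\tau y_\gamma)^{l_1}$. In case (a) ($l_1\ge 0$) the bound $\exp(-\eps t)t^{l_1}\le C(\eps,l_1)$ for $t\ge 0$, applied with $t=\trace(\tau y_\gamma)$, dominates the $\gamma$-th summand by $C(\eps,l_1)\exp(-(2\pi-\eps)\trace(\tau y_\gamma))\det(y_\gamma)^{l_2}$ with $l_2>m$. In case (b) ($l_1\le 0$), the arithmetic--geometric mean inequality for the eigenvalues of $\tau y_\gamma$ gives $\trace(\tau y_\gamma)\ge m(\det\tau)^{1/m}\det(y_\gamma)^{1/m}$; raising this to the nonpositive power $l_1$ dominates the $\gamma$-th summand by $(m(\det\tau)^{1/m})^{l_1}\exp(-2\pi\trace(\tau y_\gamma))\det(y_\gamma)^{l_2+l_1/m}$ with $l_2+l_1/m>m$. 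Either way, convergence of $S_\tau$ follows from that of
\[
\Sigma(z,c,q):=\sum_{\gamma\in\Gamma_\infty\backslash\Gamma}\exp(-c\,\trace(\tau\im\gamma\Hop z))\,\det(\im\gamma\Hop z)^{q}
\]
for some $c>0$ and some real $q>m$.

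Absorbing $c$ into $\tau$ (replacing $\tau$ by the still positive definite matrix $\tfrac{c}{2\pi}\tau$), the series $\Sigma(z,c,q)$ is, with $z=g\Hop i$, precisely $\sum_\gamma|h(\gamma g)|$ for Klingen's Poincar\'e kernel $h(g)=\exp(2\pi i\trace(\tau z))\det(y)^{q}$ of weight $\kappa=2q$; since $2q>2m$, absolute and locally uniform convergence is Klingen's theorem~\cite{Klingen}, whose proof requires only positivity of $\tau$. One can also argue more elementarily by splitting $\Gamma_\infty\backslash\Gamma$ over $(\Gamma\cap P)\backslash\Gamma$ for the Siegel parabolic $P$: along a fibre, which is a quotient of $\GL_m(\ZZ)$, $\det(\im\gamma\Hop z)$ is constant while $\trace(\tau\im\gamma\Hop z)$ is the value of a positive definite quadratic form in the $\GL_m(\ZZ)$-variable, so the fibre sum is a convergent theta series, and the residual sum over the base is a Siegel--Eisenstein series; this cruder route reaches only $q>m+\tfrac12$, which is why Klingen's sharper estimate is invoked.

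Finally, the uniformity in $\tau$: $C(\eps,l_1)$ does not involve $\tau$, and positive definite half-integral $\tau$ satisfy $\det\tau\ge 2^{-m}$ (being integral after scaling by $2$), so the prefactor $(m(\det\tau)^{1/m})^{l_1}$ is at most $m^{l_1}2^{-l_1}$, independent of $\tau$; the remaining sum is bounded by the $\tau$-uniform form of Klingen's estimate — the bulk of $\Sigma$ coming from $\gamma$ with $\im\gamma\Hop z$ small, where the exponential factor is close to $1$ and the bound is $\tau$-free, while the large-imaginary-part terms are exponentially small in $\tau$. Hence $S_\tau(\cdot,l_1,l_2)$ is bounded by a constant depending only on $(l_1,l_2)$ (and, for the compacta statement, on $z$), it decreases rapidly towards every cusp because of the factor $\exp(-2\pi\trace(\tau\im\gamma\Hop z))$, and since $\Gamma$ has finite index in the lattice $\Sp_m(\ZZ)$ so that $\vol(\Gamma\backslash G)<\infty$, it lies in $L^2(\Gamma\backslash G)$. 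The main obstacle is the convergence of $\Sigma(z,c,q)$ in the sharp range $q>m$ — the elementary fibrewise argument only yields $q>m+\tfrac12$, so Klingen's convergence theorem is the essential input — the secondary difficulty being the bookkeeping that makes every constant uniform in $\tau$.
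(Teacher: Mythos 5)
Your reductions are sound and partly coincide with the paper's: the passage to real exponents is the same, and your case (b) step $\trace(\tau y)^{l_1}\le\bigl(m(\det\tau)^{1/m}\bigr)^{l_1}\det(y)^{l_1/m}$ is exactly Lemma~\ref{lemma_spuren_symmetrischer_matrizen}(a) with the $\tau$-dependence made explicit (and your observation $\det\tau\ge 2^{-m}$ is a nice sharpening). Where you genuinely diverge is in how the residual series $\Sigma(z,c,q)$, $q>m$, is handled: you absorb the trace power into the exponential and then cite Klingen's convergence theorem as a black box, whereas the paper deliberately does \emph{not} do this --- it re-proves (a generalization of) Klingen's estimate from scratch via the reproducing-integral unfolding of Proposition~\ref{prop_klingen_trick_allg}, with condition (i) settled by Hua's integral $\int_{\mathbb E_m}\det(1-\zeta\bar\zeta)^k\,dv_\zeta<\infty$ iff $k>m$ (Lemma~\ref{prop_sphaerisches_integral}) and condition (ii) by Lemma~\ref{lemma_u_abschaetzungen}; the trace factor $\trace(\tau y)^{l_1}$ is kept and folded into the polynomial $\tilde Q(X)=X^{l_1}Q(X)$ rather than into the exponential. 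The paper explains why it takes the longer road: the comparison-with-known-series route gives absolute convergence easily, but the theorem also asserts a bound \emph{uniform in $z$} (hence $L^2$ by finite volume) and independent of $\tau$, and that is precisely what the unfolding identity $\sum_\gamma L(\gamma g_z)\,|h(\gamma g_z)|=2c_1$ delivers for free. So the one load-bearing point you should make explicit is that you need Klingen's theorem in the strong form ``the majorant series is bounded on all of $\mathcal H$ by a constant depending only on the weight'' for arbitrary real positive definite $\tau$ (your rescaled $\tfrac{c}{2\pi}\tau$ is not half-integral); quoted merely as ``converges for $\kappa>2m$'', as this paper quotes it, it does not yet give the $L^2$ membership or the $\tau$-uniform bound, and your heuristic about the bulk of the sum coming from small $\im\gamma\Hop z$ is not a substitute for that estimate. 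Since Klingen's own proof is the very unfolding argument the paper generalizes, the gap is one of citation granularity rather than of substance, but as written your last two claims rest on it.
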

From this we deduce the properties of the Poincar\'e series. 
\begin{cor}\label{cor_konvergenzbereich_u_v}
 The Poincar\'e series $\mathcal P_\tau(g,s_1,s_2)$ of  weight $\kappa$  
converges absolutely and  uniformly on compact sets within
\begin{equation*}
 \{(s_1,s_2)\in\CC^2\mid \re(2s_2+\kappa)>2m\textrm{ and } \re(\frac{2}{m}s_1+2s_2+\kappa)>2m\}\:.
\end{equation*}
%
It belongs to $L^2(\Gamma\backslash G)$.
For $m=2$ the Poincar\'e series $P(g,u,v)$ have got
the cone of convergence (see Figure~\ref{bild_konvergenzbereich}) 
\begin{equation*}
A\:=\: \{(u,v)\in\CC^2\mid \re u>2 \textrm{ and } \re v>5\}\:.
\end{equation*}
\end{cor}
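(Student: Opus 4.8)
The plan is to deduce everything from Theorem~\ref{Konvergenzbereich} by unwinding the definitions. First, observe that by Definition~\ref{def_allgemeine_poincarereihe}, the summand $H_\tau(g,s_1,s_2)$ of $\mathcal P_\tau(g,s_1,s_2)$ is exactly $j(g,i)^{-\kappa}$ times the summand of the series $S_\tau$ with $l_1=s_1$ and $l_2=s_2$, evaluated at $z=g\Hop i$; more precisely, since $j(\gamma g,i) = j(\gamma,g\Hop i)\,j(g,i)$ and $\im(\gamma\Hop z)$ transforms by the usual cocycle, one checks that
\begin{equation*}
 \mathcal P_\tau(g,s_1,s_2) \:=\: j(g,i)^{-\kappa}\,\overline{\text{(automorphy factor)}}\cdot S_\tau(g\Hop i, s_1,s_2)
\end{equation*}
up to a unimodular automorphy factor that does not affect absolute convergence. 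Hence $\mathcal P_\tau(g,s_1,s_2)$ converges absolutely and locally uniformly in $g$ precisely on the set of $(s_1,s_2)$ for which $S_\tau(z,s_1,s_2)$ does, and membership in $L^2(\Gamma\backslash G)$ transfers as well because $|j(g,i)|^{-\kappa}$ is the weight-$\kappa$ automorphy factor and the $L^2$-norm over $\Gamma\backslash G$ only sees $|\cdot|$.

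Next I would translate the convergence cone of Theorem~\ref{Konvergenzbereich}, which is the union of region (a) $\{\re l_1\geq 0,\ \re l_2>m\}$ and region (b) $\{\re l_1\leq 0,\ \re(l_2+l_1/m)>m\}$, into conditions on $(s_1,s_2)$ with $l_1=s_1$, $l_2=s_2$. The union is described uniformly by the two inequalities $\re(2s_2+\kappa)>2m$ — wait, more carefully: substituting, region (a)$\cup$(b) is exactly $\{\re s_2>m\}\cap\{\re(s_2+s_1/m)>m\}$ when $\re s_1$ has either sign, since for $\re s_1\geq 0$ the binding constraint is $\re(s_2+s_1/m)>m$ (which then forces $\re s_2>m$ too... no): let me restate. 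For $\re s_1\geq 0$, $\re(s_2+s_1/m)\geq \re s_2$, so region (a)'s condition $\re s_2>m$ is the one that must be checked, and it implies $\re(s_2+s_1/m)>m$. For $\re s_1\leq 0$, region (b)'s condition $\re(s_2+s_1/m)>m$ is the binding one and it implies $\re s_2>m$. In both cases the point lies in $\{\re s_2>m\}\cap\{\re(s_2+s_1/m)>m\}$, and conversely. Rewriting $\re s_2>m$ as $\re(2s_2+\kappa)>2m$ (legitimate since $\kappa=2m$ will be the eventual case, but in general one should keep $\kappa$ generic — here the stated cone uses $2s_2+\kappa$, so actually the claim is with the shift built in; I would double-check that Definition~\ref{def_allgemeine_poincarereihe} already carries the $\kappa$-dependence only through $j$, so the exponent conditions are genuinely $\re s_2>m$ and $\re(s_2+s_1/m)>m$, and the paper's $\re(2s_2+\kappa)>2m$ form must come from a different normalization of $l_2$ vs $s_2$; I would reconcile this by checking the exact relation between the $S_\tau$-variables and Definition~\ref{def_allgemeine_poincarereihe}). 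Modulo that bookkeeping, $\re(2s_2+\kappa)>2m \iff \re s_2 > m$ and $\re(\tfrac2m s_1 + 2s_2+\kappa)>2m \iff \re(s_2+s_1/m)>m$, giving the first displayed cone.

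Finally, for the $m=2$ statement I would substitute the change of variables from Definition~\ref{definition_poincare}, namely $s_1=\tfrac12(v-2u-1)$ and $s_2=\tfrac12(u-(\kappa-m))=\tfrac12(u-(\kappa-2))$. With $m=2$ and $\kappa=4$: $s_2=\tfrac12(u-2)$, so $\re(2s_2+\kappa)>2m$ becomes $\re(u-2+4)>4$, i.e. $\re u>2$. And $\tfrac2m s_1 = s_1 = \tfrac12(v-2u-1)$, so $\re(\tfrac2m s_1+2s_2+\kappa)>2m$ becomes $\re\bigl(\tfrac12(v-2u-1)+(u-2)+4\bigr)>4$, i.e. $\re\bigl(\tfrac{v}{2}-u-\tfrac12+u+2\bigr)>4$, i.e. $\re(\tfrac{v}{2}+\tfrac32)>4$, i.e. $\re v>5$. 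This is exactly the cone $A=\{\re u>2,\ \re v>5\}$, completing the proof. The only real obstacle is the bookkeeping in the middle step — getting the precise dictionary between the exponents in Definition~\ref{def_allgemeine_poincarereihe}, the $L^2$-normalization, and the $l_1,l_2$ of Theorem~\ref{Konvergenzbereich} exactly right so that the $\kappa$-shifts land where the statement claims; the convergence itself and the affine substitution are routine.
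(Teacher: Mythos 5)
Your overall strategy is the paper's: reduce to Theorem~\ref{Konvergenzbereich}, observe that the union of regions (a) and (b) is exactly the intersection of the two stated half-space conditions, and then perform the affine substitution of Definition~\ref{definition_poincare} for $m=2$. The last two steps are correct as you carry them out. But there is a genuine gap in the first step, and it is precisely the one piece of content the corollary's proof has to supply. You claim that the $\gamma$-th summand of $\mathcal P_\tau(g,s_1,s_2)$ is, up to a unimodular automorphy factor, the $\gamma$-th summand of $S_\tau(g\Hop i,s_1,s_2)$ with $l_1=s_1$, $l_2=s_2$. That is false: the factor $j(\gamma g,i)^{-\kappa}$ is not unimodular and cannot be pulled out of the sum. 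Writing $z=g\Hop i$ and using the Iwasawa decomposition, one has $\lvert j(\gamma g,i)\rvert^{-\kappa}=\det\bigl(\im(\gamma\Hop z)\bigr)^{\kappa/2}$ (since $\lvert j(k,i)\rvert=1$ on $K$ and $j(p,i)=\det(T)^{-1}$ with $\det(\im z)=\det(T)^2$). Hence the series of absolute values of $\mathcal P_\tau(g,s_1,s_2)$ coincides termwise with that of $S_\tau(z,l_1,l_2)$ under the dictionary $l_1=s_1$, $l_2=s_2+\tfrac{\kappa}{2}$ --- not $l_2=s_2$. This shift is exactly what turns $\re l_2>m$ into $\re(2s_2+\kappa)>2m$ and $\re(l_2+l_1/m)>m$ into $\re(\tfrac{2}{m}s_1+2s_2+\kappa)>2m$.

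The consequences of the error are visible in your own write-up: the cone you derive in the middle step, $\{\re s_2>m\}\cap\{\re(s_2+s_1/m)>m\}$, is not equivalent to the stated one (for $m=2$, $\kappa=4$ it would give $\re u>6$ rather than $\re u>2$), and your final substitution silently switches back to the paper's stated cone rather than the one you derived. You do notice the mismatch and write that you ``would reconcile this by checking the exact relation between the $S_\tau$-variables and Definition~\ref{def_allgemeine_poincarereihe},'' but that reconciliation is the actual proof and is left undone. Once the identity $\lvert j(g,i)\rvert^{-\kappa}=\det(\im(g\Hop i))^{\kappa/2}$ is in place, the rest of your argument (the case distinction on the sign of $\re l_1$ showing that (a)$\cup$(b) equals the intersection of the two inequalities, the transfer of $L^2$-membership, and the computation $2s_2+\kappa=u+2$, $\tfrac{2}{m}s_1+2s_2+\kappa=\tfrac{v}{2}+\tfrac{3}{2}$ yielding $A=\{\re u>2,\ \re v>5\}$) goes through and agrees with the paper.
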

\begin{figure}
\begin{tikzpicture}
\pgftransformtriangle
{\pgfpoint{0pt}{0pt}}
{\pgfpoint{0.433pt}{-0.25pt}} 
{\pgfpoint{0pt}{0.5pt}}
\draw [->](-0.3,0) -- (7,0);
\draw (7,-0.2) node {\small $\re u$};
\draw [->](0,-0.3) -- (0,7);
\draw (-0.8,6.9) node {\small $\re v$};
\filldraw[lightgray] (1,7.5) -- (1,2.5) -- (7.5,2.5)--(7.5,10.75);
\draw (4,5.5) node {$A$};
\draw (1,-0.3)--(1,7.5);
\draw (2.6,1) node {\small $\re u=2$};
\draw (7.5,2.5) -- (-0.3,2.5);
\draw (-1.8,1.95) node {\small $\re v=5$};
\draw [dashed] (-0.3,-0.1) --(5.375,10.75);
\draw (8,11.5) node {\small $\re (v-2u)=0$};
\end{tikzpicture}
\caption{  \textit{Cone $A$ of convergence of the Poincar\'e series in case of genus $m=2$.}}
\label{bild_konvergenzbereich}
\end{figure}
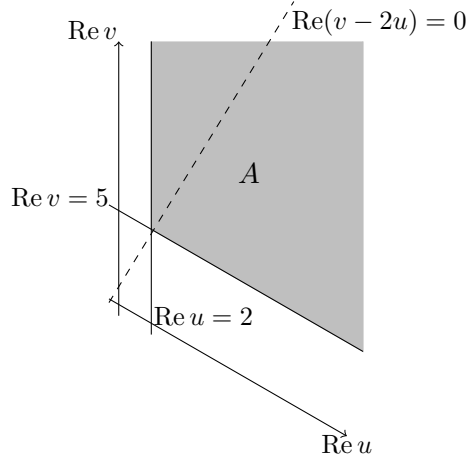
\begin{proof}[Proof of Corollary~\ref{cor_konvergenzbereich_u_v}]
The series in Theorem~\ref{Konvergenzbereich} and the Poincar\'e series have the same series of absolute values
if we identify
\begin{equation*}
 l_1\:=\:s_1\quad\quad\textrm{ and }\quad\quad l_2\:=\:s_2+\frac{\kappa}{2}\:,
\end{equation*}
as $j(g,i)^{-\kappa}=\det(y)^{\frac{\kappa}{2}}$ for $y=\im (g\cdot i)$.
%
The two domains of (a) and (b) glue together to the claimed one. 
The result for $P(g,u,v)$ is  a coordinate transform of this.
\end{proof}
To prove Theorem~\ref{Konvergenzbereich}, one can show absolute convergence  by estimating the 
 series $S_\tau(z,l_1,l_2)$ against Eisenstein series.
Then the two parts of the convergence area are quite obvious: While for $\re l_1\geq 0$, the factor $\trace(\tau y)^{l_1}$ 
is seized by
the exponential factor,  one uses Lemma~\ref{lemma_spuren_symmetrischer_matrizen} in case $\re l_1<0$ to seize the trace 
 by the determinant.
Then any of Borel, Klingen or Siegel Eisenstein series produce the same  constraints.
But for square integrability, we had to use truncations of   Eisenstein series, which is hard work for higher genus.
We prefer another approach inspired by~\cite{Klingen}. There, Klingen uses a ``reproducing integral'' 
on the Siegel halfplane $\mathcal H$ involving the function 
\begin{equation*}
 \frac{\det(\im z)\det(\im w)}{\lvert\det(z-\bar w)\rvert^2}\:.
\end{equation*}
Looked at from group level, this formula is nothing else than the leftinvariance of the Haar measure on $G/K$ applied 
to the $K$-biinvariant function above. The following observations
generalize Klingen's trick from this point of view. They will prove Theorem~\ref{Konvergenzbereich} as a special case.


Let $g_z$ respectively $g_w$ be elements of $G$ with $g_z\Hop i=z$ and $g_w\Hop i=w$ in $\mathcal H$ respectively.
We choose the Haar measure on $G/K$ to equal the measure on $\mathcal H$.
\begin{prop}\label{prop_klingen_trick_allg}

Let $f$ be some positive $K$-invariant function on $G$, and let
\begin{equation*}
 \mathcal P(g_z,f)\::=\:\sum_{\gamma\in \Gamma_\infty\backslash \Gamma}e^{M i\trace(\tau \gamma\cdot z)}f(\gamma g_z)\:,
\end{equation*}
where $M>0$ is some real constant.
Assume there is a $K$-biinvariant positive function $u$ on $G$ satisfying the following two conditions:
\begin{enumerate}
 \item [(i)]\label{bedingung_1}
$\int_{G/K}u(g_z^{-1}g_w)~dv_w=c_1<\infty$
\item[(ii)]\label{bedingung_2}
For
\begin{equation*}
 L(g_z):=\sum_{\alpha\in\Gamma_\infty}\int_{\alpha.\mathcal F}u(g_z^{-1}g_w)\frac{e^{M\trace(\tau \im z)}}{f(g_z)}~dv_w
\end{equation*}
there exists a constant $c_2(z)>0$ such that $L(\gamma g_z)\geq c_2(z)$ for all $\gamma\in\Gamma$.
\end{enumerate}
Then $\mathcal P(g_z,f)$ is absolutely convergent with $\lvert \mathcal P(g_z,f)\rvert\leq \frac{2c_1}{c_2(z)}$.
\end{prop}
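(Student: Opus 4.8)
The plan is to run Klingen's ``reproducing integral'' trick on the symmetric space $G/K\cong\mathcal H$: hypothesis~(ii) is used to replace $1$ by $L(\gamma g_z)/c_2(z)$ inside each summand, after which the weight factors cancel, and what is left is, by hypothesis~(i), dominated by a single convergent integral over $\mathcal H$. First I would pass to absolute values. Since $\tau$ is real symmetric and $\im(\gamma\cdot z)$ is real, $\bigl|e^{Mi\trace(\tau\gamma\cdot z)}\bigr|=e^{-M\trace(\tau\im(\gamma\cdot z))}$, so, $f$ being positive, the series of absolute values of $\mathcal P(g_z,f)$ equals
\[
\Sigma(g_z):=\sum_{\gamma\in\Gamma_\infty\backslash\Gamma}e^{-M\trace(\tau\im(\gamma\cdot z))}f(\gamma g_z),
\]
the sum running over a fixed set of coset representatives $\gamma$. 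All quantities occurring below are nonnegative, so Tonelli's theorem legitimises the coming interchanges of summation and integration, and a bound for $\Sigma(g_z)$ is at the same time a proof of absolute convergence and, via $|\mathcal P(g_z,f)|\le\Sigma(g_z)$, the asserted estimate.

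Next I would insert hypothesis~(ii): for each representative $\gamma$, applying~(ii) to the point $\gamma g_z$ gives $1\le L(\gamma g_z)/c_2(z)$. Spelling out $L(\gamma g_z)$ — that is, substituting $\gamma g_z$ for $g_z$ throughout the definition of $L$, so the base point $z=g_z\cdot i$ becomes $\gamma g_z\cdot i=\gamma\cdot z$ — the $w$-independent factor $e^{M\trace(\tau\im(\gamma\cdot z))}/f(\gamma g_z)$ it contains cancels exactly the factor $e^{-M\trace(\tau\im(\gamma\cdot z))}f(\gamma g_z)$ of the $\gamma$-th term of $\Sigma(g_z)$. Therefore
\[
\Sigma(g_z)\;\le\;\frac{1}{c_2(z)}\sum_{\gamma\in\Gamma_\infty\backslash\Gamma}\ \sum_{\alpha\in\Gamma_\infty}\int_{\alpha.\mathcal F}u\bigl((\gamma g_z)^{-1}g_w\bigr)\,dv_w .
\]

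The crucial step is then the unfolding of this double sum. Put $\tilde u(z,w):=u(g_z^{-1}g_w)$; by $K$-biinvariance of $u$ this is a well-defined $G$-invariant function on $\mathcal H\times\mathcal H$, so $u\bigl((\gamma g_z)^{-1}g_w\bigr)=\tilde u(\gamma z,w)=\tilde u(z,\gamma^{-1}w)$, and the substitution $w\mapsto\gamma w$ — legitimate because $dv_w$ is the $G$-invariant measure — converts $\int_{\alpha.\mathcal F}\tilde u(z,\gamma^{-1}w)\,dv_w$ into $\int_{\gamma^{-1}\alpha.\mathcal F}\tilde u(z,w)\,dv_w$. A short coset check shows that $(\gamma,\alpha)\mapsto\gamma^{-1}\alpha$ is a bijection onto $\Gamma$ when $\gamma$ runs over a set of representatives of $\Gamma_\infty\backslash\Gamma$ and $\alpha$ over $\Gamma_\infty$, so the double sum collapses to $\sum_{\delta\in\Gamma}\int_{\delta.\mathcal F}\tilde u(z,w)\,dv_w$. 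Since $-E_{2m}\in\Gamma_\infty\subset\Gamma$ acts trivially on $\mathcal H$, the translates $\delta.\mathcal F$ ($\delta\in\Gamma$) are pairwise essentially disjoint apart from the unavoidable coincidence $\delta.\mathcal F=(-\delta).\mathcal F$, and their union lies in $\mathcal H$; hence, using~(i) and the identification $G/K\cong\mathcal H$,
\begin{align*}
\Sigma(g_z)&\;\le\;\frac{1}{c_2(z)}\sum_{\delta\in\Gamma}\int_{\delta.\mathcal F}\tilde u(z,w)\,dv_w\\
&\;\le\;\frac{2}{c_2(z)}\int_{\mathcal H}\tilde u(z,w)\,dv_w\;=\;\frac{2}{c_2(z)}\int_{G/K}u(g_z^{-1}g_w)\,dv_w\;=\;\frac{2c_1}{c_2(z)} .
\end{align*}
Thus $|\mathcal P(g_z,f)|\le\Sigma(g_z)\le 2c_1/c_2(z)$, which is the assertion.

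The one place needing genuine care is this unfolding: getting the left/right cosets right in the bijection $(\gamma,\alpha)\mapsto\gamma^{-1}\alpha$, and knowing precisely how the Siegel domain $\mathcal F$ tiles $\mathcal H$ — in particular the two-fold multiplicity caused by $\pm E_{2m}$, which is exactly what produces the constant $2c_1$ rather than $c_1$ in the statement. Everything else — the cancellation of the weight factors and the interchanges of sum and integral — is formal once positivity is in hand. (The substantive work, namely exhibiting an explicit $K$-biinvariant $u$ for which~(i) and~(ii) hold and thereby deducing Theorem~\ref{Konvergenzbereich}, is carried out afterwards and does not enter the present argument.)
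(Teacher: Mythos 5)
Your argument is correct and is essentially the paper's own proof: insert $L(\gamma g_z)/c_2(z)\ge 1$ into each term, cancel the weight factors against those in the definition of $L$, and unfold the resulting double sum over $(\Gamma_\infty\backslash\Gamma)\times\Gamma_\infty$ into a sum over $\delta\in\Gamma$ of integrals over the tiles $\delta.\mathcal F$, which condition (i) controls by $2c_1$. Your write-up is in fact slightly more careful than the paper's at the unfolding step (the explicit coset bijection, the origin of the factor $2$ from $-E$ acting trivially, and the inequality rather than equality against $2c_1$ when $\Gamma$ is a proper subgroup of $\Sp_m(\ZZ)$).
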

 Notice that the constant $c_1$ of condition (i) is indeed independent of $g_z$, as the Haar integral  is leftinvariant.
While the constant $c_2(z)$ of condition (ii) only depends on  $\Gamma g_zK$. For example, we may choose the representative 
 $z\in\H$ to be reduced modulo $1$.
%
\begin{proof}[Proof of Proposition~\ref{prop_klingen_trick_allg}]
 We have
\begin{eqnarray*}
 \sum_{\gamma\in\Gamma_\infty\backslash \Gamma}\lvert L(\gamma g_z)e^{-M \trace(\tau \gamma\cdot z)}f(\gamma g_z)\rvert
&=&\sum_{\gamma\in\Gamma_\infty\backslash \Gamma}\sum_{\alpha\in\Gamma_\infty}
\int_{\alpha.\mathcal F}\!\!\!u(g_z^{-1}\gamma^{-1}g_w)~dv_w\\
&=&\sum_{\gamma\in\Gamma}\int_{\gamma.\mathcal F}\!\!\!u(g_z^{-1}\gamma g_w)~dv_w=2c_1,
\end{eqnarray*}
by condition (i). As $L(\gamma g_z)\geq c_2$ by condition (ii), we deduce from the last equality
\begin{equation*}
 \lvert \mathcal P(g_z,f)\rvert\leq\sum_{\gamma\in\Gamma_\infty\backslash \Gamma}
e^{-M \trace(\tau \im(\gamma\cdot z))}f(\gamma g_z)\leq\frac{2c_1}{c_2(z)}\:.\qedhere
\end{equation*}
\end{proof}
The function
\begin{equation*}
 u(g_z,g_w)\:=\: \frac{\det(\im z)\det(\im w)}{\lvert\det(z-\bar w)\rvert^2}
\end{equation*}
is  constant on the diagonal, symmetric, positive, and leftinvariant, 
i.e.  satisfies $u_j(hg_z,hg_w)=u_j(g_z,g_w)$ for all $h\in G$. 
\begin{lem}\label{lemma_spuren_symmetrischer_matrizen}
 Let $S\in M_{m,m}(\RR)$ be a symmetric positive definite matrix. 
\begin{itemize}
 \item [(a)]
There is a constant $k=k(m)>0$ such that
\begin{equation*}
 \trace(S)^m\:>\:k\cdot \det(S)\:.
\end{equation*}
\item[(b)]
There are positive constants $m_1=m_1(S)$ and $m_2=m_2(S)$ such that for any symmetric positive definite $Y\in M_{m,m}(\RR)$ 
\begin{equation*}
 m_1\trace(Y)\:\leq \:\trace(SY)\:\leq\:m_2\trace(Y)\:.
\end{equation*}
\end{itemize}
\end{lem}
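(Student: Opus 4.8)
The plan is to reduce both statements to elementary facts about the eigenvalues of symmetric positive definite matrices.

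For part (a), I would diagonalize $S$ and write its eigenvalues as $\lambda_1,\dots,\lambda_m>0$, so that $\trace(S)=\sum_{j=1}^m\lambda_j$ and $\det(S)=\prod_{j=1}^m\lambda_j$. The inequality between the arithmetic and geometric means gives $\trace(S)^m\geq m^m\det(S)$, with equality precisely when all $\lambda_j$ coincide. Hence one cannot take $k=m^m$ itself, but any constant $k=k(m)$ with $0<k<m^m$ works: since $\det(S)>0$, we get $\trace(S)^m\geq m^m\det(S)>k\det(S)$. (For $m=1$ one simply takes any $k<1$.)

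For part (b), fix $S$ and let $m_1=m_1(S)>0$ and $m_2=m_2(S)>0$ denote the smallest and largest eigenvalues of $S$. The matrices $S-m_1 E_m$ and $m_2 E_m-S$ are symmetric and positive semidefinite, and for any symmetric positive semidefinite $Y$ the product of two positive semidefinite matrices has nonnegative trace: writing $Y=C'C$ one has $\trace(AY)=\trace(AC'C)=\trace(CAC')\geq 0$ for any symmetric positive semidefinite $A$, by cyclicity of the trace together with the fact that $CAC'$ is again positive semidefinite. Applying this with $A=S-m_1 E_m$ and with $A=m_2 E_m-S$ yields $\trace(SY)-m_1\trace(Y)\geq 0$ and $m_2\trace(Y)-\trace(SY)\geq 0$, which is the asserted two-sided bound. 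Alternatively one can spell this out through the spectral decomposition $Y=\sum_i\nu_i v_iv_i'$ with $\nu_i>0$ and $v_i'v_i=1$, using $m_1\leq v_i'Sv_i\leq m_2$ and $\sum_i\nu_i=\trace(Y)$.

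Neither part presents a genuine obstacle; the only point requiring a little care is the equality case of the arithmetic--geometric mean inequality in part (a), which forces $k$ to be chosen strictly below $m^m$ rather than equal to it. Everything else is a direct application of the spectral theorem.
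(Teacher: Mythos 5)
Your proof is correct. Part (b) is essentially the paper's argument: the paper also diagonalizes $S$ (writing $\trace(SY)=\sum_j s_jY_{jj}$ after an orthogonal change of basis) and takes $m_1=\min_j s_j$, though it settles for the cruder $m_2=\trace(S)$ where you take the largest eigenvalue; your positive-semidefiniteness argument via $\trace(AC'C)=\trace(CAC')\geq 0$ is a clean coordinate-free way of saying the same thing. Part (a) is where you genuinely diverge: the paper expands $\trace(S)^m=(s_1+\cdots+s_m)^m=\trace(S^m)+m!\det(S)+R$ with $R$ a polynomial with positive coefficients, and reads off the strict inequality with the explicit constant $k=m!$, whereas you invoke the arithmetic--geometric mean inequality to get $\trace(S)^m\geq m^m\det(S)$ and then shrink $k$ below $m^m$ to restore strictness. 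Your route identifies the sharp threshold $m^m$ (and correctly flags the equality case $S=\lambda E_m$ as the reason $k=m^m$ itself fails), while the paper's multinomial expansion is self-contained and hands you a concrete admissible $k$ without appealing to AM--GM; since the lemma only asserts existence of some $k(m)>0$, either constant serves equally well in the application.
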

\begin{proof}This is wellknown and easy to prove.
Choosing $k=\frac{1}{2}$ and $m_1=S=m_2$, the Lemma is obviously true in case $m=1$.
So let $m\geq 2$.
 We may assume $S=\mathop{diag}(s_1,\dots,s_m)$ to be diagonal, as for any $S>0$ there exists an orthogonal matrix 
$L$ such that $L'SL$ is  diagonal.
Then use $\trace(S)=\trace(L'SL)$ in (a), while  (b) being true for all $Y$ is equivalent to (b) being true for all $LYL'$.
For (a)  we get by induction on $m$
\begin{equation*}
 \trace(S)^m\:=\:(s_1+\dots+s_m)^m\:=\:\trace(S^m)+m! \cdot\det(S)+R\:,
\end{equation*}
where $R$ is a polynomial  in $s_1,\dots,s_m$ with positive coefficients, and 
where in any monomial at least two but at most $m-1$ different $s_j$ occur.
So
\begin{equation*}
 0\:<\:\trace(S^m)+R=\trace(S)^m-m! \cdot\det(S)\:,
\end{equation*}
which implies the claim with $k=m!$.
For (b) we have $\trace(SY)=\sum_{j=1}^m s_jY_{jj}$. Choosing $m_1=\min_j\{s_j\}$ and $m_2=\trace(S)$, the estimations
hold.
\end{proof}
\begin{lem}\label{prop_sphaerisches_integral}
The integral
\begin{equation*}
  \int_{G/K} u(g_z,g_w)^{k}~dg_w
\end{equation*}
exists if and only if $k>m$.
\end{lem}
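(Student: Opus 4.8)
First I would transport the integral to the Siegel half-space $\H\cong G/K$, on which the chosen measure is $dv_w=\det(y)^{-(m+1)}\,dx\,dy$ with $w=x+iy$, and $x,y$ running over the real symmetric $m\times m$ matrices, $y>0$. Since $u(hg_z,hg_w)=u(g_z,g_w)$ for all $h\in G$ and $dv_w$ is left $G$-invariant, replacing $g_w$ by $g_zg_w$ and using $u(g_z,g_zg_w)=u(e,g_w)$ gives
\begin{equation*}
\int_{G/K}u(g_z,g_w)^{k}\,dv_w\;=\;\int_{G/K}u(e,g_w)^{k}\,dv_w\;=\;\int_{y>0}\int_{x}\frac{\det(y)^{\,k-m-1}}{\lvert\det(iE_m-\bar w)\rvert^{2k}}\,dx\,dy\;=:\;I(k),
\end{equation*}
which in particular shows the value is independent of $z$; here $iE_m-\bar w=-x+i(E_m+y)$ because $\bar w=x-iy$.

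Next I would carry out the inner integration over the real part $x$. Fixing $y$ and putting $A=E_m+y>0$, the substitution $x=A^{1/2}\xi A^{1/2}$ has Jacobian $\det(A)^{(m+1)/2}$ on symmetric matrices and satisfies $iE_m-\bar w=A^{1/2}(iE_m-\xi)A^{1/2}$, whence
\begin{equation*}
\int_{x}\lvert\det(iE_m-\bar w)\rvert^{-2k}\,dx\;=\;\det(E_m+y)^{\frac{m+1}{2}-2k}\;\Omega_m(k),\qquad\Omega_m(k):=\int_{\xi}\det(\xi^{2}+E_m)^{-k}\,d\xi.
\end{equation*}
As every integrand here is nonnegative, Tonelli's theorem turns this into the identity in $[0,\infty]$
\begin{equation*}
I(k)\;=\;\Omega_m(k)\cdot J(k),\qquad J(k):=\int_{y>0}\det(y)^{\,k-m-1}\det(E_m+y)^{\frac{m+1}{2}-2k}\,dy,
\end{equation*}
where $\Omega_m(k)$ and $J(k)$ are strictly positive elements of $(0,\infty]$; hence $I(k)<\infty$ exactly when both $\Omega_m(k)<\infty$ and $J(k)<\infty$. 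Diagonalising $\xi$ and passing to eigenvalue coordinates, one sees in the usual way that $\Omega_m(k)<\infty$ if and only if $k>m/2$.

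For $J(k)$ I would split the $y$-range at $E_m$. On $\{0<y<E_m\}$ the factor $\det(E_m+y)^{\frac{m+1}{2}-2k}$ is bounded above and below by positive constants, so this part is finite if and only if $\int_{0<y<E_m}\det(y)^{k-m-1}\,dy<\infty$, which by the classical matrix-beta threshold $\int_{0<y<E_m}\det(y)^{\sigma}\,dy<\infty\Leftrightarrow\sigma>-1$ happens if and only if $k>m$. For the complementary range, the substitution $y\mapsto y^{-1}$ (Jacobian $\det(\eta)^{-(m+1)}$, together with $\det(E_m+\eta^{-1})=\det(\eta)^{-1}\det(E_m+\eta)$) transforms the integral into $\int_{0<\eta<E_m}\det(\eta)^{\,k-(m+1)/2}\det(E_m+\eta)^{\frac{m+1}{2}-2k}\,d\eta$, which is finite if and only if $k>(m-1)/2$. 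Among $k>m/2$, $k>(m-1)/2$ and $k>m$ the last is strongest, so: for $k>m$ all three hold and $I(k)<\infty$; for $m/2<k\le m$ the first part of $J(k)$ diverges while $0<\Omega_m(k)<\infty$, so $I(k)=\infty$; and for $0<k\le m/2$ already $\Omega_m(k)=\infty$, forcing $I(k)=\infty$. Thus $I(k)<\infty\Leftrightarrow k>m$.

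The main obstacle is not conceptual but one of careful bookkeeping of exponents through the two substitutions, combined with correct invocation of three standard facts: the Jacobian $\det(P)^{m+1}$ of the congruence map $X\mapsto PXP'$ on symmetric $m\times m$ matrices, the threshold $\int_{0<y<E_m}\det(y)^{\sigma}\,dy<\infty\Leftrightarrow\sigma>-1$, and $\int_{\xi}\det(\xi^{2}+E_m)^{-k}\,d\xi<\infty\Leftrightarrow k>m/2$; for the latter two one may either cite Siegel/Hua or argue quickly in eigenvalue coordinates.
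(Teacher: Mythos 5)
Your proof is correct, but it follows a genuinely different route from the paper's. The paper also first reduces to $z=i$ by left-invariance, but then applies the Cayley transform $w\mapsto\zeta=(w-i)(w+i)^{-1}$ onto the bounded domain $\mathbb E_m$, observes $\det(1-\zeta\bar\zeta)=4^m u(g_i,g_w)$, and quotes Hua's Theorem 2.3.1 for the convergence threshold $k>m$ of $\int_{\mathbb E_m}\det(1-\zeta\bar\zeta)^k\,dv_\zeta$ with respect to the invariant measure. You instead stay on the Siegel half-space, integrate out the real part via the congruence substitution $x=(E_m+y)^{1/2}\xi(E_m+y)^{1/2}$, and reduce everything by Tonelli to the two classical thresholds $\int_{\Symm_m}\det(\xi^2+E_m)^{-k}\,d\xi<\infty\Leftrightarrow k>m/2$ and $\int_{0<y<E_m}\det(y)^\sigma\,dy<\infty\Leftrightarrow\sigma>-1$; your exponent bookkeeping through both substitutions checks out, and the case split correctly isolates $k>m$ as the binding constraint (the small-$y$ part of $J(k)$). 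The paper's argument is shorter and delegates the analysis to a single citation; yours is longer but makes visible exactly where the divergence occurs for $k\le m$ and rests only on more elementary matrix-beta and Siegel-type integrals. Two cosmetic points: your final trichotomy omits $k\le 0$, though your own identity $I(k)=\Omega_m(k)J(k)$ already handles it since $\Omega_m(k)=\infty$ there; and it would be worth stating explicitly that when $\Omega_m(k)=\infty$ the inner $x$-integral is infinite for every $y$, so $I(k)=\infty$ without needing $J(k)$ at all.
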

\begin{proof}[Proof of Lemma~\ref{prop_sphaerisches_integral}.]
In view of the left-invariance of the Haar measure, we may assume $z=i$.
Let $w=x+iy$.
 We use the Cayley transform of the Siegel halfplane to the unit circle 
$\mathbb E_m=\{\zeta=\zeta'\in M_{m,m}(\mathbb C)\mid 1-\zeta\bar\zeta>0\}$ given by
\begin{equation*}
 w\:\mapsto\: \zeta=(w-i)(w+i)^{-1},
\end{equation*}
which implies $y=\frac{1}{4}(w+i)(1-\zeta\bar\zeta)\overline{(w+i)}$. We get
\begin{equation*}
\det(1-\zeta\bar\zeta)\:=\:4^m\cdot u(g_i,g_w) \:. 
\end{equation*}
Accordingly, 
\begin{equation*}
 \int_{G/K}u(g_i,g_w)^{k}~dg_w\:=\:
4^{-mk}\int_{\mathbb E_m}\det(1-\zeta\bar\zeta)^{k}~dv_\zeta\:.
\end{equation*}
Here the measure is chosen such that $dv_\zeta=\det(1-\zeta\bar\zeta)^{-(m+1)}d\xi~\d\eta$ for $\zeta=\xi+i\eta$.
By \cite[Theorem 2.3.1]{hua}, the integral $\int_{\mathbb E_m}\det(1-\zeta\bar\zeta)^{k}~dv_\zeta$ exists if and only if $k>m$. 
\end{proof}
While Lemma~\ref{prop_sphaerisches_integral}  is a tool for condition~(i) of Proposition~\ref{prop_klingen_trick_allg},
the following  is helpful for condition~(ii).
\begin{lem}\label{lemma_u_abschaetzungen}
There exists a compact set $C\subset\mathcal F$ with $\vol(C)>0$ and a polynomial $Q$
with positive coefficients
 such that for all 
$w\in C$ and all $z\in\mathcal H$ reduced modulo $1$
\begin{equation}\label{abschaetzung_u_2}
Q(\trace(\tau y))^{-1}\:\leq\: \frac{u_2(g_z,g_w)}{\det(y)}\:\leq\:  \textrm{const.}\:.
\end{equation}
\end{lem}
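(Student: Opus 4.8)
The plan is to make the bound explicit by writing both sides of~(\ref{abschaetzung_u_2}) in coordinates $z=x+iy$, $w=u+iv$ and tracking the dependence on $z$ carefully, using that $z$ is reduced modulo $1$ so that $x$ ranges over a fixed bounded set and only $y$ (in particular $\trace(\tau y)$) can go to infinity. Recall $u_2(g_z,g_w)=\dfrac{\det(\im z)\det(\im w)}{\lvert\det(z-\bar w)\rvert^2}=\dfrac{\det y\cdot\det v}{\lvert\det(z-\bar w)\rvert^2}$, so that
\begin{equation*}
 \frac{u_2(g_z,g_w)}{\det y}\:=\:\frac{\det v}{\lvert\det(z-\bar w)\rvert^2}\:.
\end{equation*}
First I would fix any compact $C\subset\mathcal F$ with $\vol(C)>0$ and bounded away from the boundary of $\mathcal H$, so that for $w\in C$ the quantities $\det v$, and the entries of $u$ and $v$, are bounded above and below by positive constants depending only on $C$. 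Thus it suffices to bound $\lvert\det(z-\bar w)\rvert^2$ from above and below in terms of $\trace(\tau y)$.

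The upper bound on $\lvert\det(z-\bar w)\rvert$ gives the lower bound $Q(\trace(\tau y))^{-1}$ for the quotient. Here $z-\bar w=(x-u)+i(y+v)$; since $x$ is bounded (reduction modulo $1$) and $u,v$ are bounded on $C$, every entry of $z-\bar w$ is $O(1+\|y\|)$, and $\|y\|$ is controlled by $\trace y$, which by Lemma~\ref{lemma_spuren_symmetrischer_matrizen}(b) (applied with $S=\tau>0$) is bounded by a constant times $\trace(\tau y)$. Expanding the determinant of an $(m,m)$-matrix as a polynomial of degree $m$ in the entries yields $\lvert\det(z-\bar w)\rvert^2\leq Q(\trace(\tau y))$ for a polynomial $Q$ with positive coefficients depending only on $\tau$ and $C$, which is exactly the left inequality.

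For the right inequality I would bound $\lvert\det(z-\bar w)\rvert^2$ from below by a positive constant uniformly in $z$ reduced mod $1$ and $w\in C$. Write $\det(z-\bar w)=\det\big((y+v)\big)\det\big(I+i(y+v)^{-1}(x-u)\big)$; since $y+v>0$ and $(x-u)$ is real symmetric, the matrix $(y+v)^{-1}(x-u)$ is diagonalizable with real eigenvalues, so $\lvert\det(I+i(y+v)^{-1}(x-u))\rvert^2=\prod_j(1+\mu_j^2)\geq 1$. Hence $\lvert\det(z-\bar w)\rvert^2\geq\det(y+v)^2\geq\det(v)^2\geq\textrm{const.}>0$ using $y>0$ and $w\in C$. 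This gives $u_2(g_z,g_w)/\det y\leq\det v/\textrm{const.}\leq\textrm{const.}$, completing the proof.

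The main obstacle is the \emph{lower} bound on $\lvert\det(z-\bar w)\rvert^2$: one must rule out near-cancellation in the determinant as $y\to\infty$ or as $x$ varies, and this is precisely where the factorization through the positive-definite matrix $y+v$ and the reality of the eigenvalues of $(y+v)^{-1}(x-u)$ is essential — a naive entrywise estimate would not suffice. Everything else is routine polynomial bookkeeping in the matrix entries together with compactness of $C$ and the reduction of $z$ modulo $1$.
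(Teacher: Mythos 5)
Your proposal is correct and follows the same outline as the paper's proof: bound $\lvert\det(z-\bar w)\rvert^2$ above by a polynomial in $\trace(\tau y)$ (via boundedness of $x$, $u$, $v$ and Lemma~\ref{lemma_spuren_symmetrischer_matrizen}(b)) to get the left inequality, and bound it below by a positive constant to get the right one. The only difference is that the paper merely asserts these two estimates, whereas you actually prove them — in particular the uniform lower bound via $\det(z-\bar w)=\pm i^m\det(y+v)\det\bigl(I\mp i(y+v)^{-1}(x-u)\bigr)$ and the reality of the eigenvalues of $(y+v)^{-1}(x-u)$, which is a genuine (and correct) addition of substance rather than a change of route.
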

\begin{proof}[Proof of Lemma~\ref{lemma_u_abschaetzungen}] 
There is a compact set $C\subset \mathcal F$ with $\vol(C)>0$ such  that for all $w\in C$
and all $z$ reduced modulo $1$ we have
\begin{equation*}
 \lvert\det(z-\bar w)\rvert^2\:\leq P(\trace(\tau y))\:,
\end{equation*}
for some polynomial $P$ with strictly positive  coefficients.
Thus,
\begin{equation*}
Q(\trace(\tau y))^{-1}\leq \frac{u_2(g_z,g_w)}{\det (y)}\:,
\end{equation*}
where $Q$ equals $P$ up to a multiple constant depending on $C$. As this fraction is  bounded for $w$ and $z$
as above, 
(\ref{abschaetzung_u_2})  follows.
\end{proof}

\begin{proof}[Proof of Theorem~\ref{Konvergenzbereich}.]
Define for $z=x+iy$
\begin{equation*}
 h_\tau(z,l_1,l_2)\::=\:\exp(2\pi i\trace(\tau z))\trace(\tau z)^{l_1}\det(z)^{l_2}\:.
\end{equation*}
Then the series in question  is
\begin{equation*}
 S_\tau(z,l_1,l_2)\:=\:\sum_{\gamma\in\Gamma_\infty\backslash\Gamma}h_\tau(\gamma\Hop z,l_1,l_2) \:,
\end{equation*}
and we have
\begin{equation*}
 \lvert h_\tau(g,l_1,l_2)\rvert \:=\:e^{-2\pi\trace(\tau y)}\trace(\tau y)^{\re l_1}\det(y)^{\re l_2}\:.
\end{equation*}
So we will assume $l_1$ and $l_2$ to be real.
We prove part (a) first, so let $l_1\geq 0$.
We check conditions (i) and (ii) of Proposition~\ref{prop_klingen_trick_allg} for $u(g_z,g_w)^{l_2}$.
By Lemma~\ref{prop_sphaerisches_integral}, the constant
\begin{equation*}
 c_1(l_2)\::=\:\int_{G/K} u(g_z,g_w)^{l_2}~dw \:>\:0
\end{equation*}
exists iff $l_2>m$. So (i) is satisfied.
By Lemma~\ref{lemma_u_abschaetzungen}, we have for $z$ reduced modulo $1$ and $w\in C\subset \mathcal F$
\begin{equation*}
 \frac{u(g_z,g_w)^{l_2}}{\trace(\tau y)^{l_1}\det(y)^{l_2}}\: \geq \tilde Q(\trace(\tau y))^{-1}\:,
\end{equation*}
with $\tilde Q(X)=X^{l_1}Q(X)$. Thus,
\begin{eqnarray*}
 L(g_z)&:=&
\sum_{\alpha\in\Gamma_\infty}\:\int\limits_{\alpha.\mathcal F}^{}\!\!\!u(g_z,g_w)^{l_2}
\frac{e^{2\pi\trace(\tau y)}}{\trace(\tau y)^{l_1}\det(y)^{l_2}}~dv_w\\
&\geq& \int\limits_C\!\frac{e^{2\pi\trace(\tau y)}}{\tilde Q(\trace(\tau y))}~dv_w
\:\geq\: c_2(l_1,l_2)\:>0\:,
\end{eqnarray*}
where $c_2(l_1,l_2) =\vol(C)\cdot c_3$ for any bound $c_3>0$ satisfying
\begin{equation*}
 \frac{e^{2\pi X}}{\tilde Q(X)}>c_3\:.
\end{equation*}
so $c_2(l_1,l_2)$ is indeed independent of $z$. Especially, condition (ii) of Proposition~\ref{prop_klingen_trick_allg} is
satisfied and we deduce that $S_\tau(z,l_1,l_2)$ is absolutely convergent and bounded by 
\begin{equation*}
 \lvert S_\tau(z,l_1,l_2)\rvert \:\leq    \: \frac{c_2(l_1,l_2)}{c_1(l_2)}\:.
\end{equation*}
For $l_1$ and $l_2$ in any compact set we can choose the constants $c_1,c_2$ uniformly, so
the series converges absolutely and uniformly there. 
As the constants $c_1(l_2)$ and $c_2(l_1,l_2)$ do not depend on $g_z$, the series is square integrable,
 thus  belongs to $L^2(\Gamma\backslash G)$.
For part (b), that is $l_1\leq 0$, we use Lemma~\ref{lemma_spuren_symmetrischer_matrizen}(a) to estimate
\begin{equation*}
 \trace(\tau y)^{l_1}\:\leq\: k\cdot \det(y)^{\frac{l_1}{m}}
\end{equation*}
for a constant $k=k(m,\tau)>0$. Thus,
\begin{equation*}
 \lvert h_\tau(z,l_1,l_2)\rvert\:\leq\:k\cdot \lvert h_\tau(z,0,l_2+\frac{l_1}{m})\rvert
\end{equation*}
and accordingly, $S_\tau(z,l_1,l_2)$ is  majorized by $S_\tau(z,0,l_2+\frac{l_1}{m})$.
The latter is already seen to be absolutely convergent and to 
belong to $L^2(\Gamma\backslash G)$ in case $\re(l_2+\frac{l_1}{m})>m$.
\end{proof}
\section{Action of Casmir elements on Poincar\'e series}\label{sec_casimir_action_on_P}
We give formulae for the action of the Casimir elements $C_1$ and $C_2$ on the Poincar\'e series.
Some preparatory remarks are in due.
\begin{lem}\label{lemma_abschaetzung_summand}
Let $X\in\mathfrak U(\mathfrak g_\CC)$ be of degree $n$, then
\begin{equation*}
 \lvert XH_\tau(g,s_1,s_2)\rvert\:\leq\:c\cdot\sum_{j=0}^n\lvert H_\tau(g,s_1+j,s_2)\rvert
\end{equation*}
with a constant $c>0$ depending only on $X$ and $\tau$.
\end{lem}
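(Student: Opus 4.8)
The plan is to reduce to the case $n=1$ by induction and then to compute the action of a single basis element of $\mathfrak g_\CC$ on $H_\tau(g,s_1,s_2)$ explicitly. First I would fix the decomposition $\mathfrak g_\CC = \mathfrak p_+\oplus\mathfrak p_-\oplus\mathfrak k_\CC$ and observe that it suffices to bound $\lvert Y H_\tau(g,s_1,s_2)\rvert$ for $Y$ running through the basis elements $(E_\pm)_{kl}$ and $B_{kl}$, since a general $X\in\mathfrak U(\mathfrak g_\CC)$ of degree $n$ is a linear combination of ordered monomials $Y_{i_1}\cdots Y_{i_r}$ with $r\le n$ in these basis elements, and each differentiation step will, by the base case, turn a bound in terms of $\lvert H_\tau(g,s_1+j,s_2)\rvert$ into one in terms of $\lvert H_\tau(g,s_1+j+1,s_2)\rvert$ plus lower shifts. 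The constant accumulated is controlled by the (finite) structure constants and the dimension of $\mathfrak g_\CC$, hence depends only on $X$ (through $n$) and, via $\tau$, on the explicit coefficients appearing below.

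For the base case I would write $H_\tau(g,s_1,s_2) = e^{2\pi i\trace(\tau z)} j(g,i)^{-\kappa}\trace(\tau y)^{s_1}\det(y)^{s_2}$ with $z=x+iy = g\cdot i$, and compute the derivative of each factor under the right action of $Y$. Acting by $B_{kl}\in\mathfrak k_\CC$ is harmless: $K$ fixes $i$, so $z$, $y$ and the exponential are unaffected, and $B_{kl}$ acts on $j(g,i)^{-\kappa}$ by a bounded (indeed, on a scalar $K$-type, constant $\kappa\delta_{kl}$, cf.\ Lemma~\ref{Lemma_j_fuer_U}) factor; this contributes only the $j=0$ term. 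Acting by $(E_\pm)_{kl}\in\mathfrak p_\pm$ moves $z$: the key point is that $\frac{d}{dt}\big|_{t=0}\bigl(g\exp(t(E_\pm)_{kl})\bigr)\cdot i$ is a polynomial expression in the entries of $z$ (and $\bar z$) — more precisely quadratic in $z$ — so differentiating $e^{2\pi i\trace(\tau z)}$ brings down a factor linear in the entries of $\tau z$, differentiating $\trace(\tau y)^{s_1}$ brings down a factor $s_1\cdot(\text{entry of }\tau)/\trace(\tau y)\times(\text{entry of }z\text{ or }\bar z)$, differentiating $\det(y)^{s_2}$ brings down $s_2\times(\text{entry of }z\text{ or }\bar z)$ times a bounded cofactor ratio, and differentiating $j(g,i)^{-\kappa}$ brings down a bounded factor times an entry of $z$. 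In each case one estimates entries of $z$ and $\bar z$ in terms of $\trace(\tau y)$ using Lemma~\ref{lemma_spuren_symmetrischer_matrizen}(b) (so $\lvert z_{kl}\rvert$, $\lvert y_{kl}\rvert \le c(\tau)\trace(\tau y)$, while $\lvert x_{kl}\rvert$ is controlled by reducing $x$ modulo the lattice, or more simply absorbed since we only need an upper bound valid on $\Gamma_\infty\backslash G$ and $H_\tau$ already only depends on $x$ through the unimodular exponential — here one uses that the statement is about the function $H_\tau$ itself, not the sum, so one may take a fundamental domain for $\Gamma_\infty$ and bound $\lvert x_{kl}\rvert$ by a constant). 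The upshot is $\lvert Y H_\tau(g,s_1,s_2)\rvert \le c\,(\lvert H_\tau(g,s_1+1,s_2)\rvert + \lvert H_\tau(g,s_1,s_2)\rvert)$, with $c$ depending only on $\tau$.

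The induction step is then routine: if the claim holds for degree $n-1$, apply one more basis element $Y$, use the base case to each $\lvert H_\tau(g,s_1+j,s_2)\rvert$ appearing, collect terms, and note that the shifts stay in $\{0,1,\dots,n\}$. The main obstacle I anticipate is purely bookkeeping: writing down the right action of $(E_\pm)_{kl}$ on $z = UT' + iTT'$ cleanly enough to see that only the combinations $\trace(\tau y)$ and bounded ratios of minors of $y$ occur after estimating, and making sure the factors of $s_1,s_2$ that are produced do not cause trouble — but for $(s_1,s_2)$ in a fixed compact set (which is the only regime in which the Lemma is used) these are just bounded constants absorbed into $c$, and one can even keep the dependence on $s_1,s_2$ polynomial if a uniform statement is wanted. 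No delicate analysis is needed; the content is that differentiating shifts $s_1$ up by at most the degree while $s_2$ is untouched, which is exactly what the explicit form of $H_\tau$ and the $\mathfrak p_\pm$-action dictate.
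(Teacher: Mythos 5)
Your overall architecture --- compute the action of each basis element of $\gg_\CC$ on $H_\tau$ explicitly, then iterate for higher degree --- is exactly the paper's, and your treatment of the $B_{kl}$ and of the induction step is fine. The gap is in your description of the $\pp_\pm$-action. The derivative $\frac{d}{dt}\big|_{t=0}\bigl(g\exp(t(E_\pm)_{kl})\bigr)\cdot i$ is \emph{not} a quadratic expression in $z$: that formula describes the vector field of the left action $\exp(tY)\cdot z$, whereas here $Y$ acts on the right, so what appears is the pushforward under $g$ of a fixed tangent vector at $i$, expressed through $J(g,i)=T'^{-1}k$ alone. Consequently no entry of $x$ ever occurs: the paper's computation (via Lemma~\ref{Eplusminus_auf_j}) gives $(E_-)_{ab}H_\tau=2s_1H_\tau(g,s_1-1,s_2)(k'T'\tau Tk)_{ab}+2s_2H_\tau(g,s_1,s_2)(k'k)_{ab}$ and an analogous formula for $E_+$. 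Your detour through a fundamental domain for $\Gamma_\infty$ is therefore unnecessary and, as written, proves a weaker statement than the pointwise bound for all $g\in G$ that the Lemma asserts; it could be repaired by observing that $\lvert XH_\tau\rvert$ is $\Gamma_\infty$-left-invariant, but you do not say this.

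The more serious consequence concerns the term produced by differentiating $\trace(\tau y)^{s_1}$. You replace its coefficient by ``an entry of $z$ or $\bar z$'', bounded by $c(1+\trace(\tau y))$; multiplied against $H_\tau(g,s_1-1,s_2)$ this leaves a residual term $c\,\lvert H_\tau(g,s_1-1,s_2)\rvert$ with a \emph{negative} shift, which is not of the form $\sum_{j\geq 0}\lvert H_\tau(g,s_1+j,s_2)\rvert$ and cannot be absorbed: $\trace(\tau y)$ becomes arbitrarily small on $G$, and $\mathcal P_\tau(g,s_1-1,s_2)$ converges on a strictly smaller cone, so Corollary~\ref{cor_differentiation_poincare_reihen} would no longer follow. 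The point you are missing is that this coefficient is precisely an entry of $k'T'\tau Tk$, a positive semidefinite matrix of trace $\trace(\tau y)$, hence bounded by $\trace(\tau y)$ itself with no additive constant; this converts $H_\tau(g,s_1-1,s_2)$ back into $H_\tau(g,s_1,s_2)$ and guarantees that all shifts are nonnegative. Likewise the $\det(y)^{s_2}$-term carries only the bounded coefficient $(k'k)_{ab}$, and the sole genuine upward shift by one comes from differentiating the exponential. Your observation that the constant also depends on $(s_1,s_2)$ through the factors $s_1$, $s_2$, $\kappa$ is correct and points to a harmless imprecision in the paper's statement.
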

\begin{cor}\label{cor_differentiation_poincare_reihen}
 Within the domain of convergence of the Poincar\'e series,
\begin{equation*}
 X\mathcal P_\tau(g,s_1,s_2)\:=\:\sum_{\gamma\in\Gamma_\infty\backslash \Gamma}X H_\tau(\gamma g,s_1,s_2)
\:\in\:L^2(\Gamma\backslash G)\:,
\end{equation*}
for any $X\in \mathfrak U(\mathfrak g_\CC)$.
\end{cor}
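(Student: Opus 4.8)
The plan is to obtain the corollary by packaging Lemma~\ref{lemma_abschaetzung_summand} together with Theorem~\ref{Konvergenzbereich}. The first point to settle is that the cone of convergence is stable under the shift $s_1\mapsto s_1+j$ for every integer $j\ge 0$. With the identification $l_1=s_1+j$, $l_2=s_2+\tfrac{\kappa}{2}$ from the proof of Corollary~\ref{cor_konvergenzbereich_u_v}, the defining inequality $\re(2s_2+\kappa)>2m$ of the convergence domain gives $\re l_2>m$, which is condition (a) of Theorem~\ref{Konvergenzbereich} in case $\re(s_1+j)\ge 0$; and in case $\re(s_1+j)<0$ one has
\[
\re\bigl(l_2+\tfrac{l_1}{m}\bigr)=\re\bigl(s_2+\tfrac{\kappa}{2}+\tfrac{s_1+j}{m}\bigr)\ \ge\ \tfrac{1}{2}\re\bigl(\tfrac{2}{m}s_1+2s_2+\kappa\bigr)\ >\ m,
\]
which is condition (b). Hence, as long as $(s_1,s_2)$ lies in the convergence domain, for every $j\ge 0$ the series $\sum_{\gamma\in\Gamma_\infty\backslash\Gamma}\lvert H_\tau(\gamma g,s_1+j,s_2)\rvert$ converges absolutely and uniformly on compact sets, with a bound independent of $g$, and represents a function in $L^2(\Gamma\backslash G)$.

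Next, fix $X\in\mathfrak U(\mathfrak g_\CC)$ of degree $n$. Applying Lemma~\ref{lemma_abschaetzung_summand} to each summand and summing over $\gamma$ gives
\[
\sum_{\gamma\in\Gamma_\infty\backslash\Gamma}\lvert XH_\tau(\gamma g,s_1,s_2)\rvert\ \le\ c\sum_{j=0}^{n}\ \sum_{\gamma\in\Gamma_\infty\backslash\Gamma}\lvert H_\tau(\gamma g,s_1+j,s_2)\rvert,
\]
so by the first paragraph the series $\sum_\gamma XH_\tau(\gamma g,s_1,s_2)$ converges absolutely and uniformly on compact subsets and, being dominated by a finite sum of $L^2$-functions, defines an element of $L^2(\Gamma\backslash G)$.

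It remains to interchange $X$ with the summation. Since the estimate of the second paragraph holds for \emph{every} $Z\in\mathfrak U(\mathfrak g_\CC)$ in place of $X$, all termwise iterated derivatives of $\mathcal P_\tau(\cdot,s_1,s_2)=\sum_\gamma H_\tau(\gamma g,s_1,s_2)$ converge locally uniformly; the standard theorem on term-by-term differentiation of a series of $C^\infty$-functions whose derivatives of all orders converge locally uniformly then shows that $\mathcal P_\tau(\cdot,s_1,s_2)$ is a $C^\infty$-vector and that $X\mathcal P_\tau(g,s_1,s_2)=\sum_\gamma XH_\tau(\gamma g,s_1,s_2)$. (Concretely this is an induction on $\deg X$, with base case Corollary~\ref{cor_konvergenzbereich_u_v}: writing $X=YZ$ with $Y\in\mathfrak g_\CC$, one applies the one-variable term-by-term differentiation theorem along the flow of the vector field $Y$ to $\sum_\gamma ZH_\tau(\gamma g,s_1,s_2)$, whose termwise $Y$-derivative $\sum_\gamma YZH_\tau(\gamma g,s_1,s_2)$ converges locally uniformly by the above.) The only point requiring any care is the shift-stability of the convergence cone established in the first paragraph, which guarantees that every partially differentiated series occurring in the induction is again dominated by Poincar\'e-type series with parameters inside the cone; granting that, the corollary is an essentially formal consequence of Lemma~\ref{lemma_abschaetzung_summand}, and I anticipate no serious obstacle.
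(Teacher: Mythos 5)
Your proposal is correct and follows essentially the same route as the paper: dominate $XH_\tau$ by the shifted series $\mathcal P_\tau(g,s_1+j,s_2)$ via Lemma~\ref{lemma_abschaetzung_summand} and conclude convergence, square-integrability, and the interchange of $X$ with the sum. You merely make explicit two points the paper leaves implicit --- the stability of the convergence cone under $s_1\mapsto s_1+j$ for $j\ge 0$ and the term-by-term differentiation argument --- and both are verified correctly.
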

\begin{proof}[Proof of Corollary~\ref{cor_differentiation_poincare_reihen}]
By Lemma~\ref{lemma_abschaetzung_summand},
the right hand side is majorized by a sum of convergent Poincar\'e series $\mathcal P_\tau(g,s_1+j,s_2)$. 
Thus, it belongs to $L^2(\Gamma\backslash G)$ and equals the left hand side.
\end{proof}
We first remark the following basic Lemma.
\begin{lem}\label{Eplusminus_auf_j}
Let $g=p\tilde k$, where $\tilde k\in K$ corresponds to $k\in U$ and let $J=J(g)=T'^{-1}k$. Then $j(g,i)=\det (J(g))$. 
Fir the basis elements of $\mathfrak G_\CC$ we have
\begin{equation*}
 \begin{array}{ccc}
  (E_-)_{ab} J=0, & (E_+)_{ab}J=-2\bar JX^{(ab)}, & B_{ab}J=-Je_{ba},\\
 E_-j(g,i)=0, &  E_+j(g,i)= -2j(g,i)\bar k'\bar k,& B_{ab}j(g,i)=-j(g,i)\delta_{ab},\\
(E_-)_{ab}\bar J=2 JX^{(ab)},&(E_+)_{ab} \bar J=0,& B_{ab}\bar J=\bar J e_{ab},\\
E_-\overline{j(g,i)}=2\overline{j(g,i)} k' k, &  E_+j(g,i)=0 ,& B_{ab}\overline{j(g,i)}=\overline{j(g,i)}\delta_{ab}.
 \end{array}
\end{equation*}
For functions $h(z)$ of the Siegel halfspace, we have $Bh(z)=0$ and (\cite{weissauersLN}, Chapter~3)
\begin{equation}\label{EminusaufH}
 E_-h(z)=(-4i)J'\left(y\bar\partial(h(z))y\right)J,\quad E_+h(z)=4i\bar J'\left(y\partial(h(z))y\right)\bar J\:.
\end{equation}
\end{lem}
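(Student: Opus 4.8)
The identities collected in Lemma~\ref{Eplusminus_auf_j} are all instances of one purely computational phenomenon: $(E_\pm)_{ab}$ and $B_{ab}$ are first-order operators, acting by the right regular representation $Xf(g)=\frac{d}{dt}\big|_{t=0}f(g\exp(tX))$ extended $\CC$-linearly to $\mathfrak g_\CC$, so the plan is simply to evaluate each one on $J(g,i)$, on $j(g,i)=\det J(g,i)$, on their conjugates, and on $h(g\cdot i)$, using the explicit matrix realizations of $\mathfrak p_\pm$ and $\mathfrak k_\CC$ fixed in Section~\ref{section_Casimir}. The organizing device is a master formula: writing $g=\begin{pmatrix}a&b\\c&d\end{pmatrix}$ and $X=\begin{pmatrix}X_1&X_2\\X_3&X_4\end{pmatrix}$, one has $g\exp(tX)=g+t\,gX+O(t^2)$, whose lower block row is $(c+t(cX_1+dX_3),\,d+t(cX_2+dX_4))$; since $i=iE_m$, the automorphy value $J(g\exp(tX),i)$ is the new lower-right block plus $i$ times the new lower-left block, and differentiating at $t=0$ gives $X\,J(g)=c(X_2+iX_1)+d(X_4+iX_3)$.

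Feeding in $X=(E_-)_{ab}$, i.e. the matrix $\begin{pmatrix}X^{(ab)}&-iX^{(ab)}\\-iX^{(ab)}&-X^{(ab)}\end{pmatrix}$, makes both bracketed combinations vanish, so $(E_-)_{ab}J=0$; feeding in $X=(E_+)_{ab}$ gives $c\cdot 2iX^{(ab)}-d\cdot 2X^{(ab)}=-2(d-ic)X^{(ab)}$, and since $g$ is real, $d-ic=\overline{d+ic}=\bar J$, hence $(E_+)_{ab}J=-2\bar JX^{(ab)}$; feeding in $X=B_{ab}$ and using the relations between $A_{ab},S_{ab}$ and the elementary matrices produces $B_{ab}J=-Je_{ba}$. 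The $\bar J$-lines then cost nothing: for $X=Z_1+iZ_2$ with $Z_1,Z_2\in\mathfrak g$ real one has $X\bar f=\overline{\bar Xf}$, and complex-conjugating the defining matrix interchanges $(E_-)_{ab}\leftrightarrow(E_+)_{ab}$ and sends $B_{ab}$ to the matrix with $S_{ab}$ replaced by $-S_{ab}$, so applying this to the three identities just proved yields the third and fourth rows of the table. The $j(g,i)$- and $\overline{j(g,i)}$-rows follow from the Leibniz rule $X\det J=\det J\cdot\trace(J^{-1}XJ)$: $E_-j=0$ is immediate; $E_+j=-2j\,\trace(J^{-1}\bar JX^{(ab)})$ collapses to $-2j\,\bar k'\bar k$ on using $J=T'^{-1}k$ (so $J^{-1}\bar J=k^{-1}\bar k=\bar k'\bar k$ by unitarity of $k$), the symmetry of $k'k$ and of $X^{(ab)}$, and $\trace(Me_{ab})=M_{ba}$; and $B_{ab}j=-j\,\trace(e_{ba})=-j\delta_{ab}$; the conjugates follow as before.

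For the last two formulas \eqref{EminusaufH}, acting on a function $h$ of $z=g\cdot i$ only, $B_{ab}h=0$ is immediate because $\exp(tB_{ab})\in K$ fixes $i$, so $\mathfrak k_\CC$ annihilates $h$. For $E_\pm$ one must be careful since $(E_\pm)_{ab}\notin\mathfrak g$: I would write $(E_\pm)_{ab}=Z_1^{\pm}+iZ_2^{\pm}$ with $Z_j^{\pm}\in\mathfrak g$ real, compute the two velocities $\frac{d}{dt}\big|_0\exp(tZ_j^{\pm})\cdot i$ (one proportional to $iX^{(ab)}$, the other to $X^{(ab)}$), push them forward by the $\CC$-linear differential $v\mapsto J'^{-1}vJ^{-1}$ of the Siegel action at $i$, and assemble $Z_1^{\pm}h+iZ_2^{\pm}h$. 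In this combination the holomorphic derivative $\partial h$ drops out for the lower sign and $\bar\partial h$ drops out for the upper sign, leaving $E_-h=-4i\,\bar J^{-1}(\bar\partial h)\bar J'^{-1}$, which becomes $-4i\,J'(y\,\bar\partial h\,y)J$ on inserting $\bar J^{-1}=J'y$ (obtained by conjugating $y=\operatorname{Im}(g\cdot i)=\bar J'^{-1}J^{-1}$), and symmetrically for $E_+$. This is exactly the computation of \cite[Ch.~3]{weissauersLN}, which may be quoted.

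The bulk is mechanical; the one place that really requires care is this last step — the splitting of each complex root vector into its real and imaginary $\mathfrak g$-parts, and the precise way the $\partial$- and $\bar\partial$-contributions of $Dh$ recombine after the complex-linear push-forward, together with the substitutions turning $J$, $\bar J$ and $\operatorname{Im}(g\cdot i)$ into one another. Throughout, a single sign/transpose convention for $A_{ab},S_{ab}$, for $J(g)=T'^{-1}k$, and for the direction of the group action has to be fixed, so that the four rows of the table are mutually consistent and compatible with the Harish-Chandra and Casimir normalizations of Section~\ref{section_Casimir}.
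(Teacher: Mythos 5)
The paper offers no proof of this lemma at all --- it is stated as ``basic'' and the formulas \eqref{EminusaufH} are simply attributed to \cite{weissauersLN} --- so your job here is to supply the omitted verification, and the direct computation via the right regular action is the right (and essentially only) way to do it. Most of your outline checks out against the paper's conventions: the master formula $XJ(g)=c(X_2+iX_1)+d(X_4+iX_3)$ does kill $(E_-)_{ab}$ and gives $-2\bar JX^{(ab)}$ for $(E_+)_{ab}$; Jacobi's formula together with $J^{-1}\bar J=k^{-1}\bar k=\bar k'\bar k$ and the symmetry of $\bar k'\bar k$ gives the $j(g,i)$-row; the conjugation trick $X\bar f=\overline{\bar Xf}$ with $\overline{(E_\pm)_{ab}}=(E_\mp)_{ab}$ and $\overline{B_{ab}}=-B_{ba}$ reproduces the third and fourth rows consistently with the first two; and the substitution $\bar J^{-1}=J'y$ follows from $y=TT'$ and $J=T'^{-1}k$, so your reduction of \eqref{EminusaufH} to Weissauer's computation is sound.

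The one place where ``mechanical'' does not deliver what you claim is the entry $B_{ab}J=-Je_{ba}$. With the paper's literal conventions ($A_{ab}=\frac12(e_{ab}-e_{ba})$, $S_{ab}=\frac{i}{2}(e_{ab}+e_{ba})$, $k=J(\tilde k,i)$, right translation) your master formula gives $B_{ab}J=J(A_{ab}-iS_{ab})=Je_{ab}$, which differs from the stated $-Je_{ba}$ in both sign and transpose. This is not your invention: the paper's own proof of Lemma~\ref{Lemma_j_fuer_U} asserts $A_{kl}-iS_{kl}=e_{lk}$, whereas the displayed definitions give $e_{kl}$, so the source is ambiguous on exactly this point. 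The transpose is harmless (only $\delta_{ab}$ and traces of this quantity are ever used), but the sign propagates to $B_{ab}j=\pm j\delta_{ab}$ and hence to whether the weight-$\kappa$ $K$-type acts by $+\kappa\delta_{ab}$ or $-\kappa\delta_{ab}$ on $j(g,i)^{-\kappa}h$, which Proposition~\ref{Prop_Casimir2_Operation} and everything downstream depend on. You acknowledge that a convention must be fixed, but as written you assert the computation ``produces'' the stated formula when, under the conventions you yourself set up, it produces the opposite sign; you need to either identify $K\cong U_m$ via $\tilde k\mapsto J(\tilde k,i)^{-1}$ (equivalently, work with the contragredient $K$-type) or adjust the definition of $B_{kl}$, and then state that choice explicitly so that the $B$-row, Lemma~\ref{Lemma_j_fuer_U}, and the Harish-Chandra normalization of Section~\ref{section_Casimir} are all computed with the same one.
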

Here $\partial=(\partial_{ab})_{ab}=\frac{1+\delta_{ab}}{2}\frac{1}{2}(\partial_{x_{ab}}-i\partial_{y_{ab}})$.
For convenience, we collect some easy formulae:
\begin{eqnarray*}
\bar \partial(e^{2\pi i\trace(\tau z)})=0\: , 
&& \partial(e^{2\pi i\trace(\tau z)})=2\pi i\tau e^{2\pi i\trace(\tau z)}\:,\\
\partial_y(\trace(\tau y))=\tau, && \partial_y(\det( y))=\det( y)y^{-1}\:,\\
 \partial_y(y_{ab})=X^{(a,b)}\:, &&\bar\partial (f(y))=\frac{i}{2}\partial_y(f(y))=-\partial(f(y))\:.
\end{eqnarray*} 
\begin{proof}[Proof of Lemma~\ref{lemma_abschaetzung_summand}]
The function 
\begin{equation*}
 H_\tau(g,s_1,s_2)\:=\: \frac{\exp(2\pi i\trace(\tau z))}{j(g,i)^{\kappa}}\trace(\tau y)^{s_1}\det (y)^{s_2}
\end{equation*}
belongs to $\mathcal C^\infty$. Using Lemma~\ref{Eplusminus_auf_j}, we compute the action of the basis of $\mathfrak g_\CC$:
\begin{eqnarray*}
 B_{ab}H_\tau(g,s_1,s_2)&=&\delta_{ab}\kappa H_\tau(g,s_1,s_2),\\
(E_-)_{ab} H_\tau(g,s_1,s_2)&=&2s_1H_\tau(g,s_1-1,s_2)(k'T'\tau Tk)_{ab}\\
&&+2s_2 H_\tau(g,s_1,s_2)(k'k)_{ab}\:,\\
(E_+)_{ab} H_\tau(g,s_1,s_2)&=& 2(\kappa+s_2) H_\tau(g,s_1,s_2)(\bar k'\bar k)_{ab}\\
&&-8\pi H_\tau(g,s_1,s_2)(\bar k'T'\tau T\bar k)_{ab}\\
&&+2s_1 H_\tau(g,s_1-1,s_2)(\bar k'T'\tau T\bar k)_{ab}\:.
\end{eqnarray*}
Notice that any single element of $T'\tau T$ is seized by $c(\tau)\trace(\tau y)$, for some constant $c(\tau)>0$.
So, as $K$ is compact, the  claim follows for elements $X$ of $\mathfrak g_\CC$.
The Lemma  follows for arbitrary $X\in\mathfrak U(\mathfrak g_\CC)$ by iterating the argument above.
\end{proof}
In Corollary~\ref{cor_differentiation_poincare_reihen},
notice that $X$ acts from the right, while the summation is over left translates. 
So for the action of $C_1$ and $C_2$ on the Poincar\'e series, we are reduced to compute the action on $H_\tau(g,s_1,s_2)$. 
 By Proposition~\ref{Prop_Casimir2_Operation}, it is enough to compute the actions
of $\trace(E_+E_-)$ and $\trace(E_+E_-E_+E_-)$.
Notice that $H_\tau$ is of the form $j(g,i)^{-\kappa}h(z)$, for a
function $h$ of the Siegel halfspace.
Define the differential operator
\begin{equation}
 D(h)=\sum_{a,b=1}^m\Bigl(\partial\bigl((y\bar\partial(h)y)_{ab}\bigr)\Bigr)_{ba}\:.
\end{equation}
\begin{prop}\label{Prop_Casimir_auf_f}
 Let $m$ and $\kappa$ be arbitrary. Let $h$ be a smooth function of the Siegel halfspace. Then:
\begin{equation*}
 \trace(E_+E_-)(j(g,i)^{-\kappa}h)\:=\:j(g,i)^{-\kappa}\bigl(
16\cdot D(h)
+8i(m+1-\kappa)\trace(y\bar\partial(h))\bigr)\:.
\end{equation*}
\end{prop}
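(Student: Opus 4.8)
The plan is to obtain the formula by applying $E_-$ and then $E_+$ to $F:=j(g,i)^{-\kappa}h$, using throughout the explicit rules of Lemma~\ref{Eplusminus_auf_j} together with the fact that each component $(E_\pm)_{ab}$ is a derivation and hence obeys the Leibniz rule.

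First I would compute $E_-F$. Since $E_-j(g,i)=0$, also $(E_-)_{ab}(j(g,i)^{-\kappa})=0$, so $E_-F=j(g,i)^{-\kappa}E_-h$, and the second identity of~\eqref{EminusaufH} gives $E_-F=-4i\,j(g,i)^{-\kappa}J'MJ$ with $M:=y\bar\partial(h)y$, a symmetric matrix-valued smooth function on the Siegel halfspace (symmetric because $y$ and $\bar\partial(h)$ are). By definition of the formal trace,
\begin{equation*}
 \trace(E_+E_-)F=\sum_{a,b}(E_+)_{ab}\bigl((E_-F)_{ba}\bigr)=-4i\sum_{a,b}(E_+)_{ab}\Bigl(j(g,i)^{-\kappa}\sum_{c,d}J_{cb}M_{cd}J_{da}\Bigr)\:.
\end{equation*}
Expanding by Leibniz, $(E_+)_{ab}$ falls on one of the factors $j(g,i)^{-\kappa}$, $J_{cb}$, $M_{cd}$, $J_{da}$, for which Lemma~\ref{Eplusminus_auf_j} supplies $(E_+)_{ab}(j(g,i)^{-\kappa})=2\kappa\,j(g,i)^{-\kappa}(\bar k'\bar k)_{ab}$, $(E_+)_{ab}J=-2\bar JX^{(ab)}$, and, by~\eqref{EminusaufH} applied to the smooth scalar $M_{cd}$, $(E_+)_{ab}M_{cd}=4i\bigl[\bar J'(y\partial(M_{cd})y)\bar J\bigr]_{ab}$.

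What remains is the index bookkeeping, which collapses every product of $J,\bar J,k,T$ that occurs by means of $J\bar J'=\bar JJ'=y^{-1}$ — a consequence of $TT'=y$ and the unitarity relations $k\bar k'=\bar k'k=\bar kk'=k'\bar k=1$ — and of $\trace(My^{-1})=\trace(y\bar\partial(h))$, using the symmetry of $M$. In the term where $(E_+)_{ab}$ differentiates $M_{cd}$, the $J$-sandwich becomes a chain of $y$'s and $y^{-1}$'s which cancels down to Kronecker deltas, leaving exactly $16\,j(g,i)^{-\kappa}\sum_{c,d}\bigl(\partial(M_{cd})\bigr)_{dc}=16\,j(g,i)^{-\kappa}D(h)$, the $16$ coming from $(-4i)(4i)$. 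Each of the three first-order terms reduces to a multiple of $j(g,i)^{-\kappa}\trace(y\bar\partial(h))$: the $j(g,i)^{-\kappa}$-term contributes $-8i\kappa$, while differentiating $J_{cb}$, respectively $J_{da}$, contributes $4i(m+1)$ each, the ``$m$'' arising because one of the two indices of $(E_+)_{ab}$ is left as a free summation index and the ``$+1$'' from the diagonal $\delta_{ab}$-piece of $X^{(ab)}$; together $8i(m+1)-8i\kappa=8i(m+1-\kappa)$. Adding the two pieces gives the assertion.

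The only obstacle is carefulness: keeping the six index sums straight, tracking which Kronecker deltas originate from $X^{(ab)}$, and checking that every stray $k$, $T$, $J$ really does cancel so that the result depends on $h$ and $y$ alone. There is no conceptual difficulty beyond Lemma~\ref{Eplusminus_auf_j} and the derivation property; incidentally, this computation (for general $m$ and $\kappa$) is also the building block for the later treatment of $\trace(E_+E_-E_+E_-)$.
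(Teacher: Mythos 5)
Your proposal is correct and follows essentially the same route as the paper: both rest entirely on Lemma~\ref{Eplusminus_auf_j}, the Leibniz rule for the $(E_\pm)_{ab}$, and the cancellations $J\bar J'=y^{-1}$, producing the same three first-order contributions ($-8i\kappa$ from the automorphy factor and $4i(m+1)$ from each $J$) and the same second-order term $16D(h)$. The only difference is organizational — the paper first splits off the cross term $\trace\bigl(E_+(j(g,i)^{-\kappa})E_-(h)\bigr)$ by the product rule and then computes $\trace(E_+E_-)h$ separately, whereas you expand all four factors in one pass — which does not change the substance of the argument.
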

\begin{proof}[Proof of Proposition~\ref{Prop_Casimir_auf_f}]
We make frequent use of Lemma~\ref{Eplusminus_auf_j}.
By the product rule, 
\begin{equation*}
 \trace(E_+E_-)\bigl(j(g,i)^{-\kappa}h\bigr)\:=\:
j(g,i)^{-\kappa}\trace(E_+E_-)(h)+
\trace\bigl(E_+(j(g,i)^{-\kappa})\cdot E_-(h)\bigr)\:,
\end{equation*}
where
\begin{equation*}
 \trace\bigl(E_+(j(g,i)^{-\kappa})E_-(h)\bigr)\:=\:-8i\kappa \cdot j(g,i)^{-\kappa}\trace\bigl(y\bar\partial(h)\bigr)\:.
\end{equation*}
The proposition follows once we have proved
\begin{equation*}
 \trace(E_+E_-)h(z)\:=\:8i(m+1)\trace (y\bar\partial(h(z))) +16D(h(z))\:.
\end{equation*}
But this follows summing up the terms
\begin{eqnarray*}
 (E_+)_{ab}(E_-)_{ba}(h)&=&
(-4i) (E_+)_{ab}\bigl(J'y\bar\partial(h)yJ\bigr)_{ba}\\
&=&
8i\sum_k\bigl(\bar JX^{(ab)}\bigr)_{ka}\bigl(y\bar\partial(h)yJ\bigr)_{kb}\\
&&+16\sum_{k,l}J_{ka}J_{lb}\bigl(\bar J'y\partial\bigl((y\bar\partial(h)y)_{kl}\bigr)y\bar J\bigr)_{ab}\\
&& 
+8i\sum_k\bigl(y\bar\partial(h)yJ\bigr)_{ak}\bigl(\bar JX^{(ab)}\bigr)_{kb}\:,
\end{eqnarray*}
where $ (\bar JX^{(ab)}\bigr)_{kb}\:=\:\frac{1}{2}(\delta_{bb}+\delta_{ab})\bar J_{ka}$.
The first and the last sum yield $4i(m+1)\trace(y\bar\partial(h))$ each, and the middle one yields $16D(h)$.
\end{proof}
 Calculations like those for Proposition~\ref{Prop_Casimir_auf_f}  yield:
\begin{prop}\label{prop_Casimir2_auf j_h}
Let $m$ and $\kappa$ be arbitrary. Let $h$ be a function of the Siegel halfspace. Then:
\begin{eqnarray*}
 && j(g,i)^\kappa\trace(E_+E_-E_+E_-)(j(g,i)^{-\kappa}h)\:=\:\\
&&\hspace*{1.8cm}
\Bigl[
8i(m+1)(m+1-\kappa)(m+2-2\kappa)\cdot\trace(y\bar\partial(h))\\
&&\hspace*{2cm}
+16(m+1)(3m+4-4\kappa)\cdot D(h)\\
&&\hspace*{2cm}
+4ij(g,i)^\kappa\trace(E_+E_-)(j(g,i)^{-\kappa}\trace(y\bar\partial(h))\\
&&\hspace*{2cm}
-32(m+1-\kappa)(m+2-2\kappa)\sum_{a,b}\left(\bar\partial\left((y\bar\partial(h)y)_{ab}\right)\right)_{ba}\\
&&\hspace*{2cm}
+16(4i)(m+2-2\kappa)\sum_{a,b,c}\left(\partial\left(\left( y\bar\partial\left((y\bar\partial(h)y)_{ab}
\right)\right)_{cb}\right)\right)_{ac}\\
&&\hspace*{2cm}
+16\cdot (8i)(m+1-\kappa)
\sum_{a,b,c}\left(\bar\partial\left(\left( y\partial\left((y\bar\partial(h)y)_{ab}
\right)\right)_{cb}\right)\right)_{ac}\\
&&\hspace*{2cm}
+16^2\sum_{a,b,c,d}\Bigl(\partial\bigl((y\bar\partial\bigl((\partial((y\bar\partial(h)y)_{ab}))_{bc}\bigr)y
)_{cd}\bigr)\Bigr)_{da}\Bigl.\Bigr]\:.
\end{eqnarray*}
\end{prop}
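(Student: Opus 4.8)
The plan is to run exactly the Leibniz-rule bookkeeping used in the proof of Proposition~\ref{Prop_Casimir_auf_f}, but iterated four times. Writing out the (non-cyclic) matrix-valued trace,
\begin{equation*}
 \trace(E_+E_-E_+E_-)\:=\:\sum_{a,b,c,d}(E_+)_{ab}(E_-)_{bc}(E_+)_{cd}(E_-)_{da}\:,
\end{equation*}
I would apply the four left-invariant first-order operators to $F=j(g,i)^{-\kappa}h$ from right to left, at each stage invoking Lemma~\ref{Eplusminus_auf_j}: formula~(\ref{EminusaufH}) for the action on a function of the Siegel halfspace, and the remaining entries of that Lemma for the action on the automorphy factors $J$, $\bar J$, $j(g,i)$, combined with the product rule.

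First I would strip off the factor $j(g,i)^{-\kappa}$. Since $E_-j(g,i)=0$ and $E_+\bigl(j(g,i)^{-\kappa}\bigr)=2\kappa\,j(g,i)^{-\kappa}\bar k'\bar k$, any term in which one of the four operators differentiates a power of $j$ is again proportional to $j(g,i)^{-\kappa}$ with a coefficient that is polynomial in $\kappa$; this is precisely the origin of the prefactors $(m+1-\kappa)$, $(m+2-2\kappa)$, $(m+1)(3m+4-4\kappa)$ and the $m\kappa^4$-type constants. After pulling $j(g,i)^{-\kappa}$ out, one is left with an identity purely among differential operators on $\H$.

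For that identity I would proceed layer by layer. Applying $(E_-)_{da}$ to $h$ produces a $J'(y\bar\partial(h)y)J$-expression by~(\ref{EminusaufH}); applying $(E_+)_{cd}$ to it splits, via the product rule, into a deeper term $\bar J'\bigl(y\partial((y\bar\partial(h)y)_{\bullet\bullet})y\bigr)\bar J$ plus contraction terms arising from $(E_+)_{cd}(J)=-2\bar JX^{(cd)}$, which pair up matrix indices and lower the differential order. Two further steps reproduce the same dichotomy: the branch in which every operator enters the innermost function yields the single $16^2$-term; the branches in which the third or fourth operator contracts an $X^{(\bullet)}$ against an adjacent $J$ or $\bar J$ yield the mixed terms $\sum\bigl(\partial(y\bar\partial(\cdots))\bigr)$ and $\sum\bigl(\bar\partial(y\partial(\cdots))\bigr)$, and still further contractions collapse these to $\sum\bigl(\bar\partial((y\bar\partial(h)y)_{ab})\bigr)_{ba}$, to $D(h)$, and to $\trace(y\bar\partial(h))$. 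The term $4i\,j(g,i)^\kappa\trace(E_+E_-)\bigl(j(g,i)^{-\kappa}\trace(y\bar\partial(h))\bigr)$ is obtained not by re-expanding but by recognizing that once the rightmost pair $(E_+)_{cd}(E_-)_{da}$ has acted and the appropriate indices are summed out, the remaining $\sum_{a,b}(E_+)_{ab}(E_-)_{ba}$ is $\trace(E_+E_-)$ applied to a multiple of $\trace(y\bar\partial(h))$, so Proposition~\ref{Prop_Casimir_auf_f} can be quoted recursively. Collecting all branches and matching the coefficient of each operator type gives the stated formula.

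The main obstacle is the sheer combinatorial volume rather than any conceptual difficulty: four nested product-rule expansions, each splitting into an ``enter the function'' branch and several ``contract a $J$'' branches, generate a large number of matrix-index contractions that must be simplified with the $\bar JX^{(ab)}$-identities and the $(k'k)$, $(\bar k'\bar k)$ bookkeeping, and then re-sorted into the seven listed terms. This is precisely the kind of calculation handled by the computer algebra system used elsewhere in the paper; the only genuinely delicate points are to respect the non-cyclicity of $\trace$ on matrix-valued matrices at every step and to track correctly the $\kappa$-polynomial coefficients produced by the $j(g,i)^{-\kappa}$-Leibniz terms.
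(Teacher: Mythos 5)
Your proposal is correct and follows essentially the same route as the paper, which proves Proposition~\ref{prop_Casimir2_auf j_h} simply by iterating the product-rule computation of Proposition~\ref{Prop_Casimir_auf_f} via Lemma~\ref{Eplusminus_auf_j} (and notes that the resulting bookkeeping was checked by computer algebra). Your layer-by-layer branch analysis and the handling of the $j(g,i)^{-\kappa}$ Leibniz terms match the intended argument.
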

\section{Results for genus two and weight four}\label{sec_casimir_action_on_P_geschlecht_2}
Fix genus $m=2$ and weight $\kappa=4$.
Recall from Definition~\ref{definition_poincare}
\begin{equation}\label{eqn_def_s1_s2}
 s_1\:=\:\frac{v-2u-1}{2}\:,\:\quad s_2\:=\:\frac{u-2}{2}
\end{equation}
in this case.
According to Corollary~\ref{cor_konvergenzbereich_u_v}, the Poincar\'e series
\begin{equation*}
 P(g,u,v)\:=\:\sum_{\gamma\in \Gamma_\infty\backslash \Gamma}
\frac{\exp(2\pi i\trace(\tau \gamma\Hop z))}{j(\gamma g,i)^{\kappa}}\trace(\tau \im(\gamma\Hop z))^{\frac{v-2u-1}{2}}
\det (\im\gamma\Hop z)^{\frac{u-2}{2}}
\end{equation*}
converges in the cone defined by $\re u>2$, $\re v>5$. The point of holomorphicity  $(u,v)=(2,5)$ belongs to closure of $A$.
We give  results on the action of Casimir elements $C_1$ and $C_2$ on the Poincar\'e series  above
using Propositions~\ref{Prop_Casimir_auf_f}, \ref{prop_Casimir2_auf j_h}.
The  elementary but vast computations were verified by the computer algebra system Magma.
These results crucially depend on the weight $\kappa=4$.
\begin{eqnarray*}
 C_1 P(g,u,v) &=& 4(s_1^2+2s_1s_2+2s_2^2+2s_1+5s_2+8)  P(g,u,v)\\
&& -16\pi (s_1+s_2)P(g,u,v+2)\\
&& -8\det(\tau) s_1(s_1-1) P(g,u+2,v)\\
&& +32\pi\det(\tau) s_1P(g,u+2,v+2)\:,
\end{eqnarray*}
and 
\begin{eqnarray*}
 &&\hspace*{-5mm}C_2 P(g,u,v)\:=\: \\
 && 4(4s_1^4+16s_1^3s_2+24s_1^3+24s_1^2s_2^2+72s_1^2s_2+57s_1^2+16s_1s_2^3\\
  &&\hspace*{0.5cm}+72s_1s_2^2+114s_1s_2+46s_1+8s_2^4+40s_2^3+84s_2^2+51s_2+26) P(g,u,v)\\
  &&
  -256\pi^2\det(\tau) (s_1+s_1)(4s_1+2s_2+1) P(g,u+2,v+4)\\
  &&
  +32\pi\det(\tau) s_1(16s_1^2+36s_1s_2+30s_1+24s_2^2+40s_2+13)P(g,u+2,v+2)\\
  &&
  -8\det(\tau) s_1(s_1-1)(8s_1^2+24s_1s_2+28s_1+24s_2^2+60s_2+43)     P(g,u+2,v)\\
  &&
  +512\pi^2\det(\tau)^2 s_1(s_1-1)P(g,u+4,v+4)\\
  &&
  -256\pi\det(\tau)^2 s_1(s_1-1)(s_1-2) P(g,u+4,v+2)\\
  &&
  +32\det(\tau)^2 s_1(s_1-1)(s_1-2)(s_1-3) P(g,u+4,v)\\
  &&
  -16\pi(s_1+s_2)(8(s_1+s_2)(s_1+s_2+4)+37) P(g,u,v+2)\\
  &&
  +256\pi^2(s_1+s_2)(s_1+s_2+1)P(g,u,v+4)\:.
\end{eqnarray*}
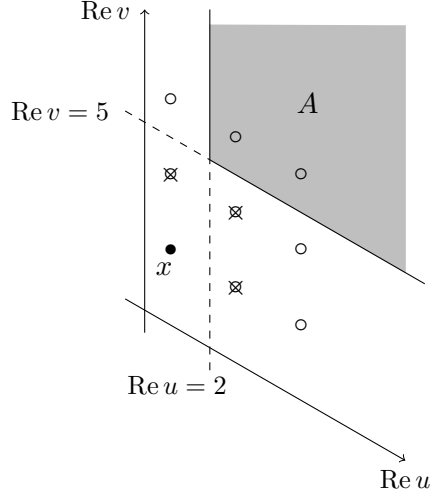
\begin{figure}
\begin{tikzpicture}

\pgftransformtriangle
{\pgfpoint{0pt}{0pt}}
{\pgfpoint{0.866pt}{-0.5pt}} 
{\pgfpoint{0pt}{1pt}}

\draw [->](-0.3,0) -- (4,0);
\draw (4,-0.25) node {\small $\re u$};
\draw [->](0,-0.3) -- (0,4);
\draw (-0.6,3.7) node {\small $\re v$};

\filldraw[lightgray] (1,4.3) -- (1,2.5) -- (4,2.5)--(4,5.79);
\draw (2.5,4) node {$A$};
\draw (1,2.5)--(1,4.5);
\draw [dashed] (1,-0.3) -- (1,2.5);
\draw (0.5,-0.75) node {\small $\re u=2$};
\draw (4.3,2.5) -- (1,2.5);
\draw [dashed] (-0.3,2.5) -- (1,2.5);
\draw (-1.3,2) node {\small $\re v=5$};

\draw (0.4,1) node {\small $\bullet$};
\draw (0.3,0.7) node {$x$};

\draw (0.4,2) node {$\times$};
\draw (1.4,1) node {$\times$};
\draw (1.4,2) node {$\times$};

\draw (0.4,3) node {$\circ$};
\draw (0.4,2) node {$\circ$};
\draw (2.4,1) node {$\circ$};
\draw (2.4,2) node {$\circ$};
\draw (1.4,1) node {$\circ$};
\draw (2.4,3) node {$\circ$};
\draw (1.4,2) node {$\circ$};
\draw (1.4,3) node {$\circ$};

\end{tikzpicture}
\caption{  \textit{A point $x=(u,v)$ and its shifts ``$\:\times$'' (respectively ``$\:\circ$'') corresponding to the shifted  
Poincar\'e series under $C_1$ (respectively $C_2)$.}}\label{bild_translate_unter_C1_undC_2}
\end{figure}
We are going to use the Casimir action to continue the Poincar\'e series analytically. 
If the Poincar\'e series produced by a Casimir operator $D$ had an area of convergence larger than $P(g,u,v)$ it was applied to, we could apply the resolvent of $D$ to them and would get
an analytic continuation of $P(g,u,v)$ to that area as long as the resolvent exists.
But both, $C_1$ and $C_2$, applied to the series produce several series which aren't of convergence
better than $P(g,u,v)$ itself at the same time.
This is much better understood by Figure~\ref{bild_translate_unter_C1_undC_2}.
There, the shifted series are marked according to their shifts (e.g. $P(g,u+2,v+4)$ corresponds to a shift by $(2,4)$).

But  defining
\begin{equation*}
 D_+(u)\::=\: \frac{1}{2}\bigl(C_1^2-C_2+11C_1-2(u^2-1)C_1+2(u^2-1)(u^2-4)\bigr)
\end{equation*}
we compute
\begin{eqnarray}
&&D_+(u)P(g,u,v)\:=\:\label{equ_dplus_kappa_2m}\label{Omega_groesser_angewandt}\\
&&\hspace*{1cm} 16\det(\tau)^2s_1(s_1-1)(s_1-2)(s_1-3)P(g,u+4,v)\nonumber\\
&&\hspace*{1cm}-128\pi\det(\tau)^2s_1(s_1-1)(s_1-2)P(g,u+4,v+2)\nonumber\\
&&\hspace*{1cm}+256\pi^2\det(\tau)^2s_1(s_1-1)P(g,u+4,v+4)\nonumber\\
&&\hspace*{1cm}+8\det(\tau)s_1(s_1-1)(u+1)(v+1)P(g,u+2,v)\nonumber\\
&&\hspace*{1cm}-182\det(\tau)s_1(s_1s_2+\frac{5}{6}s_1+\frac{4}{3}s_2^3+\frac{2}{3}s_2-2)P(g,u+2,v+2)\nonumber\\
&&\hspace*{1cm}+64\pi^2\det(\tau)(v-u-3)(u-3)P(g,u+2,v+4)\:.\nonumber
\end{eqnarray}
So there is an area~$B$ (see Figure~\ref{bild_Omega_+_translate}) such that the shifts
of $x=(u,v)$ by  $(4,0)$, $(4,2)$, $(4,4)$, $(2,0)$, $(2,2)$, and $(2,4)$
occurring in (\ref{Omega_groesser_angewandt}) belong to $A$ if and only if $x\in A\cup B$.
That is, the occurring shifted Poincar\'e series have concerted area of convergence
\begin{equation*}
 A\cup B\:=\:\{(u,v)\in\CC^2\mid \re u> 0 \textrm{ and } \re v>5\}\:.
\end{equation*}
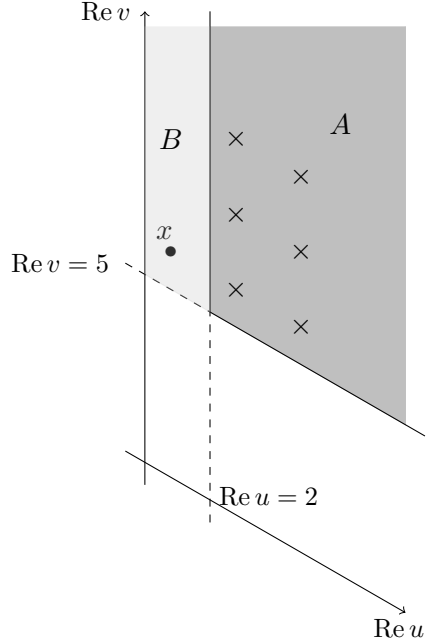
\begin{figure}
\begin{tikzpicture}
\pgftransformtriangle
{\pgfpoint{0pt}{0pt}}
{\pgfpoint{0.866pt}{-0.5pt}} 
{\pgfpoint{0pt}{1pt}}

\draw [->](-0.3,0) -- (4,0);
\draw (3.9,-0.25) node {\small $\re u$};
\draw [->](0,-0.3) -- (0,6);
\draw (-0.6,5.7) node {\small $\re v$};

\filldraw[lightgray] (1,6.3) -- (1,2.5) -- (4,2.5)--(4,7.8);
\draw (3,6) node {$A$};
\draw (1,2.5)--(1,6.5);
\draw [dashed] (1,-0.3) -- (1,2.5);
\draw (1.9,0.5) node {\small $\re u=2$};
\draw (4.3,2.5) -- (1,2.5);
\draw [dashed] (-0.3,2.5) -- (1,2.5);
\draw (-1.3,2) node {\small $\re v=5$};
\filldraw (0.4,3) node {\small $\bullet$};
\draw (0.3,3.2) node {$x$};

\draw (1.4,3) node {$\times$};
\draw (2.4,3) node {$\times$};
\draw (1.4,4) node {$\times$};
\draw (2.4,4) node {$\times$};
\draw (1.4,5) node {$\times$};
\draw (2.4,5) node {$\times$};

\filldraw[nearly transparent, lightgray] (0,5.8) -- (0,2.5) -- (1,2.5) -- (1,6.3);
\draw (0.4,4.5) node {$B$};
\end{tikzpicture}
\caption{  \textit{A point $x=(u,v)$ and its shifts corresponding to the shifted  Poincar\'e series under $D_+(u)$.
If $x$ belongs to $A\cup B$, then all the shifts belong to $A$.}}\label{bild_Omega_+_translate}
\end{figure}
Similarly, defining
\begin{equation*}
 D_-(v)\::=\: 2C_2-C_1^2-34C_1-2(v^2-9)C_1+(v^2-9)(v^2-1)\:,
\end{equation*}
and calculating
\begin{eqnarray}
 D_-(v)P(g,u,v)&=&+512\pi^2(s_1+s_2)(s_1+s_2+1)P(g,u,v+4)\label{equ_dminus_kappa_2m}\label{Omega_kleiner_angewandt}\\
&&+64\pi(s_1+s_2)(u-1)(v+1)P(g,u,v+2)\nonumber\\
&&+128\pi\det(\tau)s_1(s_1-3)(v+1)P(g,u+2,v+2)\nonumber\\
&&-1024\pi^2\det(\tau)(s_1+s_2)^2P(g,u+2,v+4)\:,\nonumber
\end{eqnarray}
for any point $x=(u,v)$ the shifts (see Figure~\ref{bild_Omega_-_translate}) by 
$(0,4)$, $(0,2)$, $(2,2)$ and $(2,4)$ belong to area~$A$ if and only if $x$ belongs to $A\cup C$ .
The four shifted Poincar\'e series occurring in (\ref{Omega_kleiner_angewandt}) have 
concerted area of convergence
\begin{equation*}
 A\cup C\:=\:\{(u,v)\in\CC^2\mid \re u>2 \textrm{ and } \re v>3\}\:.
\end{equation*}
\begin{figure} 
\begin{tikzpicture}
\pgftransformtriangle
{\pgfpoint{0pt}{0pt}}
{\pgfpoint{0.866pt}{-0.5pt}} 
{\pgfpoint{0pt}{1pt}}

\draw [->](-0.3,0) -- (5.5,0);
\draw (5.4,-0.25) node {\small $\re u$};
\draw [->](0,-0.3) -- (0,5);
\draw (-0.6,4.7) node {\small $\re v$};

\filldraw[lightgray] (1,5.3) -- (1,2.5) -- (5.5,2.5)--(5.5,7.55);
\draw (4,6) node {$A$};
\draw (1,2.5)--(1,5.5);
\draw [dashed] (1,-0.3) -- (1,2.5);
\draw (1.9,0.5) node {\small $\re u=2$};
\draw (5.7,2.5) -- (1,2.5);
\draw [dashed] (-0.3,2.5) -- (1,2.5);
\draw (-1.3,2) node {\small $\re v=5$};

\draw (2,2) node {\small $\bullet$};
\draw (2.1,1.8) node {$x$};

\draw (2,3) node {$\times$};
\draw (2,4) node {$\times$};
\draw (3,3) node {$\times$};
\draw (3,4) node {$\times$};

\draw [dashed] (-0.3,1.5) -- (5.7,1.5);
\draw (-1.3,1) node {\small $\re v=3$};
\filldraw[nearly transparent, lightgray] (1,2.5) -- (1,1.5) -- (5.5,1.5) -- (5.5,2.5);
\draw (4,1.9) node  {$C$};
\end{tikzpicture}
\caption{  \textit{A point $x=(u,v)$ and its shifts corresponding to the shifted  Poincar\'e series under $D_-(v)$.
If $x$ belongs to $A\cup C$, then all the shifts belong to $A$.}}\label{bild_Omega_-_translate}
\end{figure}
Containing polynomials in the complex variables $u$ and $v$, the  operators $D_-(v)$ and $D_+(u)$ 
aren't selfadjoint any more but are very near to, as their real and imaginary parts are. 
The operators $D_+(u)$ and $D_-(v)$ originally were constructed such that only shifts 
in $u$ or $v$, respectively, occur. But their true shape is revealed by (\ref{wahre_gestalt_von_Dplusminus}).

The resolvents of $D_-(v)$ and $D_+(u)$ were studied in section~\ref{sec_resolvents} (Propositions~\ref{prop_discrete_spec}, 
\ref{prop_cont_spec_2-dim}, \ref{prop_cont_spec_1-dim}). 
As the 
resolvent $R_+(u)$ of $D_+(u)$ is a meromorphic function for $\frac{1}{2}<\re u$,  we  obtain the
meromorphic continuation of the Poincar\'e series to the halfstripe~$B'$ (see Figure~\ref{bild_fortsetzung_durch_resolventen}) by the $L^2$-function
\begin{equation*}
 P(\cdot,u,v)\:=\: R_+(u)\mathcal P_+(\cdot,u,v)\:.
\end{equation*}
Analogously, $R_-(v)\mathcal P_-(\cdot,u,v)$  establishes the meromorphic continuation to the halfstripe~$C$,
as the resolvent $R_-(v)$ is meromorphic on $3<\re v$.
Now both, $R_+(u)$ and  $R_-(v)$ give meromorphic continuation to area~$D$.
We can iterate this argument for $R_-(v)$ as long as this resolvent exists as a meromorphic function, i.e. as long as $\re v>1$ (area $E$). We get
%
\begin{figure}
\begin{tikzpicture}

\pgftransformtriangle
{\pgfpoint{0pt}{0pt}}
{\pgfpoint{0.866pt}{-0.5pt}} 
{\pgfpoint{0pt}{1pt}}

\draw [->](-0.3,0) -- (5,0);
\draw (4.9,-0.25) node {\small $\re u$};
\draw [->](0,-0.3) -- (0,5);
\draw (-0.6,4.7) node {\small $\re v$};

\filldraw[lightgray] (1,5.5) -- (1,2.5) -- (5.5,2.5)--(5.5,7.75);
\draw (4,5) node {$A$};
\draw (1,2.5)--(1,5.5);
\draw [dashed] (1,-0.9) -- (1,2.5);
\draw (1.8,-0.7) node {\small $\re u=2$};
\draw (5.5,2.5) -- (1,2.5);
\draw [dashed] (-0.3,2.5) -- (1,2.5);
\draw (-1.3,2) node {\small $\re v=5$};

\filldraw[nearly transparent, lightgray] (0.25,5.15) -- (0.25,2.5) -- (1,2.5) -- (1,5.5);
\draw (0.6,3.5) node {$B'$};

\draw [dashed] (-0.3,1.5) -- (5.5,1.5);
\draw (-1.3,1) node {\small $\re v=3$};
\filldraw[nearly transparent, lightgray] (1,2.5) -- (1,1.5) -- (5.5,1.5) -- (5.5,2.5);
\draw (4,1.9) node  {$C$};

\filldraw[semitransparent, lightgray] (0.25,1.5) -- (0.25,2.5) -- (1,2.5) -- (1,1.5);
\draw (0.6,1.9) node {$D$};

\draw [dashed] (-0.3,0.5) -- (5.5,0.5);
\draw (-1.3,0) node {\small $\re v=1$};
\draw [dashed] (0.25,-0.8) -- (0.25,5.5);
\draw (-0.5,-1.2) node {\small $\re u=\frac{1}{2}$};
\filldraw[nearly transparent, gray] (0.25,1.5) -- (0.25,0.5) -- (5.5,0.5) -- (5.5,1.5);
\draw (2,0.9) node {$E$};
\end{tikzpicture}

\caption{  \textit{Areas for continuation of the Poincar\'e series using the resolvents $R_+(u)$ and $R_-(v)$.}}
\label{bild_fortsetzung_durch_resolventen}
\end{figure}
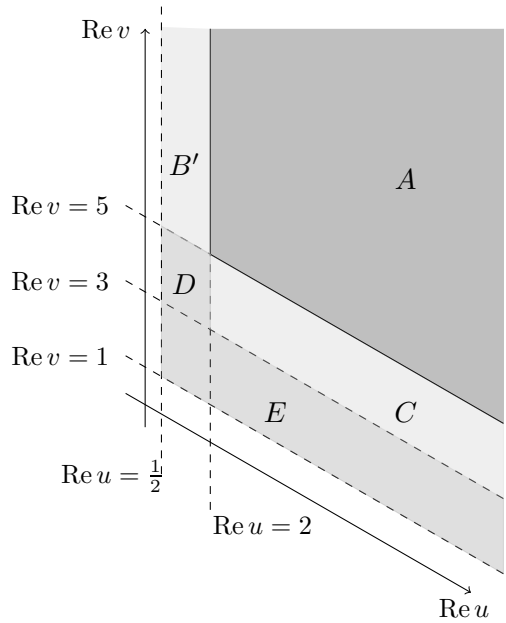
\begin{thm}\label{meromorpheFortsetzung}
 The Poincar\'e series $P(\cdot,u,v)$ admits a meromorphic continuation in $L^2(\Gamma\backslash G)$ to the cone 
\begin{equation*}
\{(u,v)\in \CC^2\mid \re u>\frac{1}{2},\: \re v>1\}\:.
\end{equation*}
The poles are contained in a finite number of lines $u=\mathop{const.}$ and $v=\mathop{const.}$.
\end{thm}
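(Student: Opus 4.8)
The plan is to turn the Casimir identities (\ref{equ_dplus_kappa_2m}) and (\ref{equ_dminus_kappa_2m}) into a continuation scheme by alternately inverting $D_+(u)$ and $D_-(v)$ through their resolvents, following the chain of half-strips $B'$, $C$, $D$, $E$ of Figure~\ref{bild_fortsetzung_durch_resolventen}. First I would record the identities $D_+(u)P(\cdot,u,v)=\mathcal P_+(\cdot,u,v)$ and $D_-(v)P(\cdot,u,v)=\mathcal P_-(\cdot,u,v)$, where $\mathcal P_\pm(\cdot,u,v)$ abbreviate the right-hand sides of (\ref{equ_dplus_kappa_2m}), (\ref{equ_dminus_kappa_2m}); these hold a priori on the cone $A$ of Corollary~\ref{cor_konvergenzbereich_u_v}, and since all functions involved are real-analytic in $(u,v)$ wherever defined, they persist on every connected domain containing (a piece of) $A$ to which $P$ is continued. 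By Theorem~\ref{Konvergenzbereich} together with Corollary~\ref{cor_differentiation_poincare_reihen}, $\mathcal P_+(\cdot,u,v)$ is an absolutely (hence, uniformly on compacta) convergent, $L^2(\Gamma\backslash G)$-valued holomorphic function on $\{\re u>0,\ \re v>5\}$ — exactly the set where all six shifts $(2,0),(2,2),(2,4),(4,0),(4,2),(4,4)$ of $(u,v)$ land in $A$ — and likewise $\mathcal P_-(\cdot,u,v)$ on $\{\re u>2,\ \re v>3\}$, where the four shifts $(0,2),(0,4),(2,2),(2,4)$ land in $A$.

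Next, combining Propositions~\ref{prop_discrete_spec}, \ref{prop_cont_spec_2-dim} and \ref{prop_cont_spec_1-dim}, the resolvent $R_+(u)$ is a meromorphic family of operators on $L^2(\Gamma\backslash G)$ for $\re u>\frac12$, with at most finitely many poles, all at fixed values of $u$ (i.e. on lines $u=\mathop{const.}$), and $R_-(v)$ likewise for $\re v>1$; moreover $R_+(u)\,D_+(u)=\id$ and $R_-(v)\,D_-(v)=\id$ applied to smooth $L^2$-vectors, since on each spectral component $D_\pm$ acts by the scalar $D_\pm(\cdot,\Lambda)$. As $P(\cdot,u,v)$ and all its $\mathfrak U(\gg_\CC)$-derivatives are square-integrable (Corollary~\ref{cor_differentiation_poincare_reihen}), on $A\cap\{\re u>\frac12\}$ we get
\[
 P(\cdot,u,v)\:=\:R_+(u)\,\mathcal P_+(\cdot,u,v)\:.
\]
The right-hand side is meromorphic (as an $L^2$-valued function) on $\{\re u>\frac12\}\cap\{\re u>0,\ \re v>5\}=\{\re u>\frac12,\ \re v>5\}$, so it continues $P$ to that half-strip, the only new poles lying on finitely many lines $u=\mathop{const.}$ coming from $R_+$ (and on the pole lines of the six shifted series). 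With $P$ now known on $S_1:=\{\re u>\frac12,\ \re v>5\}$, the four shifts of $(u,v)$ occurring in $\mathcal P_-(\cdot,u,v)$ all lie in $S_1$ as soon as $\re u>\frac12$ and $\re v>3$, so the identity $P(\cdot,u,v)=R_-(v)\,\mathcal P_-(\cdot,u,v)$ continues $P$ meromorphically to $S_2:=\{\re u>\frac12,\ \re v>3\}$. Repeating this step once more — the shifts now land in $S_2$ whenever $\re v>1$ — yields the continuation to $\{\re u>\frac12,\ \re v>1\}$, the asserted cone. At each stage the new poles are those of the resolvent used (lines $v=\mathop{const.}$) together with the finitely many, integer-translated pole lines of the previously continued Poincaré series, so the total pole set stays a finite union of lines $u=\mathop{const.}$ and $v=\mathop{const.}$.

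I expect the main obstacle to be making rigorous the inversion identities $P=R_+(u)\mathcal P_+$ and $P=R_-(v)\mathcal P_-$: one must confirm that the resolvents — produced spectrally in Section~\ref{sec_resolvents} from the decomposition (\ref{eqn_spectral_decompo})–(\ref{eqn_cont_spectral_decompo}) — genuinely act as two-sided inverses of the differential operators $D_\pm$ on the specific vectors $P(\cdot,u,v)$, i.e. that these vectors and their Casimir images lie in the operator core on which $R_\pm D_\pm=\id$, so that no discrepancy between $D_\pm$ and its closure intervenes; this rests on the smoothness and $\mathfrak U(\gg_\CC)$-stable square-integrability of $P$ from Corollary~\ref{cor_differentiation_poincare_reihen} and on the spectral picture of Section~\ref{section_spectrum}. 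The remaining work — checking that the region bookkeeping of Figures~\ref{bild_Omega_+_translate}, \ref{bild_Omega_-_translate} and \ref{bild_fortsetzung_durch_resolventen} closes up exactly at $\re u>\frac12$, $\re v>1$, and that meromorphy is preserved under the $L^2$-valued compositions $R_\pm\circ\mathcal P_\pm$ — is routine.
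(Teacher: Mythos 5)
Your proposal is correct and follows essentially the same route as the paper: the identities $D_\pm P=\mathcal P_\pm$ combined with the meromorphy of $R_+(u)$ for $\re u>\frac12$ and of $R_-(v)$ for $\re v>1$ (Propositions~\ref{prop_discrete_spec}--\ref{prop_cont_spec_1-dim}), iterated through the half-strips of Figure~\ref{bild_fortsetzung_durch_resolventen}. Your bookkeeping (first $R_+$ to reach $\re u>\frac12$, then $R_-$ twice to reach $\re v>1$) is only a trivially reordered version of the paper's chain $B'\to C\to D\to E$, and your remarks on the inversion $R_\pm D_\pm=\id$ on smooth $L^2$-vectors and on the persistence of the identities under continuation are exactly the points the paper leaves implicit.
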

Theorem~\ref{meromorpheFortsetzung} is the best result we can get globally on $L^2(\Gamma\backslash G)$.
But by the means of section~\ref{sec_resolvents} we can  deduce exactly in which spectral components
a pole in question occurs.
According to the spectral decomposition (\ref{eqn_spectral_decompo}), 
we may decompose the Poincar\'e series within its domain of convergence into its orthogonal spectral components
\begin{equation*}
 P(\cdot,u,v)\:=\:  P_{\mathop{cont}}(\cdot,u,v)+\sum_{\Lambda}P_\Lambda(\cdot,u,v)\:.
\end{equation*}
\begin{cor}\label{cor_meromorphic_cont}
\begin{itemize}
 \item [(a)]
Any component $P_\Lambda(\cdot,u,v)$ of a discrete spectral component parametrized by $\Lambda$ has meromorphic continuation
to the entire plane $\CC^2$. Its singularities are at most poles and lie on the lines $u=u_0-2\mathbb N_0$ and 
$v=v_0-2\mathbb N_0$, where
$u_0$ and $v_0$ are zeros of $D_+(u,\Lambda)$ and $D_-(v,\Lambda)$ respectively.
\item[(b)]
The component $P_{\re\Lambda=0}(\cdot,u,v)$ of the $2$-dimensional continuous spectrum has analytic continuation to the cone
$\{(u,v)\mid\re u>0, \re v>0\}$.
\item[(c)]
Any component $P_{\gamma,c}(\cdot,u,v)$ of a $1$-dimensional spectral component pa\-ra\-me\-trized by $K_{\gamma}(c)$, where
$\gamma=\alpha_1$ or $\alpha_1+2\alpha_2$, has meromorphic continuation to the cone $\{(u,v)\mid \re u>0, \re v> c\}$.
 Its singularities are at most poles of order one and lie on $u=c$. Here $u=1$ occurs.
\item[(d)]
Any component $P_{\gamma,c}(\cdot,u,v)$ of a $1$-dimensional spectral component pa\-ra\-me\-trized by $K_{\gamma}(c)$, where
$\gamma=\alpha_2$ or $\alpha_1+\alpha_2$, has meromorphic continuation to the cone $\{(u,v)\mid\re u>\frac{ c}{2}, \re v>0\}$. 
Its singularities are at most poles of order one and lie on $v=c$.
\end{itemize}
\end{cor}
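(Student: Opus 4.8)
The plan is to project the two Casimir identities (\ref{Omega_groesser_angewandt}) and (\ref{Omega_kleiner_angewandt}) onto the individual summands of the spectral decomposition (\ref{eqn_spectral_decompo}), (\ref{eqn_cont_spectral_decompo}), and then to repeat the resolvent continuation underlying Theorem~\ref{meromorpheFortsetzung} one spectral component at a time, where the sharp analyticity and pole information of Propositions~\ref{prop_discrete_spec}, \ref{prop_cont_spec_2-dim} and \ref{prop_cont_spec_1-dim} is available. Since $D_+(u)$ and $D_-(v)$ are Casimir operators, they commute with the orthogonal projections onto the pieces of (\ref{eqn_spectral_decompo}), (\ref{eqn_cont_spectral_decompo}); because the shifted Poincar\'e series occurring on the right of (\ref{Omega_groesser_angewandt}) and (\ref{Omega_kleiner_angewandt}) lie in $L^2(\Gamma\backslash G)$ by Corollary~\ref{cor_konvergenzbereich_u_v}, applying such a projection to these identities (which hold throughout the domain of convergence $A$) yields the same identities with every $P(\cdot,u,v)$ replaced by its component $P_\Lambda$, $P_{\re\Lambda=0}$ or $P_{\gamma,c}$. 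On a component parametrized by $\Lambda$ the operator $D_+(u)$ acts as multiplication by the scalar $D_+(u,\Lambda)$, so solving for $P_\bullet(\cdot,u,v)=R_+(u)\bigl[\,\text{shifts in }u\,\bigr]$ continues $P_\bullet$ one step $\re u\mapsto\re u-2$ to the left whenever $R_+(u)$ is a bounded operator there and the shifted series on the right are already continued; the analogous step with $D_-(v)$ continues in $v$ by $\re v\mapsto\re v-2$, and gluing the two directions is legitimate by uniqueness of analytic continuation.

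For (a), on a discrete component $D_+(u,\Lambda)=(\Lambda_1^2-u^2)(\Lambda_2^2-u^2)$ is a non-zero polynomial in $u$, hence by Proposition~\ref{prop_discrete_spec} $R_+(u)$ is meromorphic on all of $\CC$ with poles only at its zeros $u_0$; nothing then obstructs iterating the leftward $u$-step, and the shift $u\mapsto u+2$ on the right of (\ref{Omega_groesser_angewandt}) carries each pole to $u_0,u_0-2,u_0-4,\dots$, i.e. onto the lines $u=u_0-2\mathbb N_0$. Running (\ref{Omega_kleiner_angewandt}) with $R_-(v)$ likewise produces the lines $v=v_0-2\mathbb N_0$, and alternating the two steps covers all of $\CC^2$ (the polynomial coefficients in (\ref{Omega_groesser_angewandt}), (\ref{Omega_kleiner_angewandt}) only create zeros, not new poles). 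For (b), Proposition~\ref{prop_cont_spec_2-dim} gives $R_+(u)$ analytic on $\re u>0$ and $R_-(v)$ analytic on $\re v>0$, with no poles and no control across $\re u=0$, resp. $\re v=0$; one application of the $u$-step continues $P_{\re\Lambda=0}$ holomorphically to $\{\re u>0,\ \re v>5\}$, and then iterating the $v$-step lowers the $v$-bound to $0$, giving holomorphy on $\{\re u>0,\ \re v>0\}$.

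Parts (c) and (d) come down to reading off, from the explicit intersections $K_\gamma(c)\cap N_\pm(u)$ of Section~\ref{sec_resolvents}, which of the two resolvents stays meromorphic along the whole one-dimensional component. For $\gamma=\alpha_1$ or $\alpha_1+2\alpha_2$ one has $K_\gamma(c)\cap N_+(u)=K_\gamma(c)$ at $u=c$ and $=\emptyset$ otherwise for $\re u>0$, so $R_+(u)$ is meromorphic on $\re u>0$ with a single simple pole at $u=c$, while $K_\gamma(c)\cap N_-(v)$ lies over $\re v=c$, so $R_-(v)$ is only holomorphic for $\re v>c$; continuing in $u$ by $R_+(u)$ (reaching $\re u>0$ and picking up the simple pole $u=c$) and then in $v$ by $R_-(v)$ (reaching $\re v>c$, no new pole, and since $c\le 1$ the pole $u=c$ stays inside while its would-be shift $u=c-2$ stays outside the region) gives the cone $\{\re u>0,\ \re v>c\}$ of (c); that $u=1$ actually occurs among the poles is Proposition~\ref{prop_auftreten_kontinuierliches spektrum}. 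For $\gamma=\alpha_2$ or $\alpha_1+\alpha_2$ the roles of $D_+$ and $D_-$ are interchanged: $K_\gamma(c)\cap N_+(u)$ lies over $\re u=\tfrac{c}{2}$ while $K_\gamma(c)\cap N_-(v)=K_\gamma(c)$ at $v=c$, so $R_+(u)$ is holomorphic for $\re u>\tfrac{c}{2}$ and $R_-(v)$ is meromorphic for $\re v>0$ with a simple pole at $v=c$, yielding the cone $\{\re u>\tfrac{c}{2},\ \re v>0\}$ of (d). In all cases $0\le c\le 1$ and finiteness of the pole set come from Section~\ref{section_spectrum} together with Proposition~\ref{prop_cont_spec_1-dim}. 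The one point that needs care beyond this bookkeeping is the legitimacy of each iteration step, namely that $R_\pm$ applied to the already-continued, $L^2(\Gamma\backslash G)$-valued, holomorphic right-hand side really returns the continuation of the left-hand side; this rests on $R_\pm$ being a genuine bounded operator on the component in question away from its poles, which is exactly what Propositions~\ref{prop_discrete_spec}--\ref{prop_cont_spec_1-dim} provide.
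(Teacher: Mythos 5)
Your proof is correct and follows essentially the route the paper intends: the paper states Corollary~\ref{cor_meromorphic_cont} without a separate proof, as an immediate consequence of projecting the continuation scheme of Theorem~\ref{meromorpheFortsetzung} (via the identities (\ref{Omega_groesser_angewandt}), (\ref{Omega_kleiner_angewandt})) onto the spectral components and invoking the component-wise resolvent analysis of Propositions~\ref{prop_discrete_spec}--\ref{prop_cont_spec_1-dim}, which is exactly what you spell out. Your bookkeeping of the pole locations and of the cones reachable by iterating the $u$- and $v$-steps matches the paper's assertions.
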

For the analytic continuation to the critical point $(u,v)=(2,5)$ we point out that we do not establish complete analyticity  there, but   boundedness along the line $s_1=0$.
This is exactly the line we need further on for the result on holomorphic projection.
\begin{thm}\label{thm_continuation_kappa_2m}
 Let $m=2$ and $\kappa=4$. 
The $L^2$-limit
\begin{equation*}
 P(\cdot,2,5)\::=\:\lim_{u\to 2} P(\cdot,u,2u+1)
\end{equation*}
exists. It is (formally) holomorphic and has got a $C^\infty$-representative. 
\end{thm}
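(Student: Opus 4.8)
### Proof strategy for Theorem~\ref{thm_continuation_kappa_2m}

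The plan is to take the limit along the line $s_1 = 0$, i.e. $v = 2u+1$, letting $u \to 2$, and to show the limit exists in $L^2(\Gamma\backslash G)$ by analysing each spectral component separately via the decomposition $P(\cdot,u,v) = P_{\mathop{cont}}(\cdot,u,v) + \sum_\Lambda P_\Lambda(\cdot,u,v)$. The parametrisation $s_1 = 0$ is crucial: the shifted Poincar\'e series appearing in $D_+(u)P$ and $D_-(v)P$ carry prefactors $s_1(s_1-1)(s_1-2)(s_1-3)$, $s_1(s_1-1)(s_1-2)$, $s_1(s_1-1)$, etc., and all the $u\!+\!2$-- and $u\!+\!4$--shifted terms in \eqref{Omega_groesser_angewandt} and \eqref{Omega_kleiner_angewandt} have a factor $s_1$ (equivalently $s_1-1$, $s_1-2$) that vanishes on the target line. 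So along $s_1 = 0$ the identities collapse: first I would substitute $s_1 = 0$ (so $v = 2u+1$, $s_2 = \tfrac{u-2}{2}$) into \eqref{Omega_kleiner_angewandt} to see that $D_-(v)P(\cdot,u,2u+1)$ reduces to only the two genuinely ``better-converging'' shifts $P(\cdot,u,v+2)$ and $P(\cdot,u,v+4)$ — the two shifts with a factor $(s_1+s_2)$ rather than a factor $s_1$ — both of which lie in the convergence cone $A$ (or in a region already handled by Theorem~\ref{meromorpheFortsetzung}) once $\re u$ is slightly below $2$ because the $v$-coordinate has been pushed up. Applying $R_-(v)$, which by Proposition~\ref{prop_cont_spec_1-dim} and Proposition~\ref{prop_cont_spec_2-dim} is holomorphic for $\re v > 1$ (hence certainly near $v = 5$), then exhibits $P(\cdot,u,2u+1)$ near $u=2$ as $R_-(2u+1)$ applied to an $L^2$-valued function that is already holomorphic there.

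The remaining work is to rule out a pole of $R_-(v)$, equivalently of $R_+(u)$, exactly at the target point, and to show that whatever singularity Theorem~\ref{meromorpheFortsetzung} leaves in is actually absent along this particular line. Here I would invoke Corollary~\ref{cor_meromorphic_cont} component-by-component. For the $2$-dimensional continuous component, part~(b) already gives analyticity for $\re u > 0$, $\re v > 0$, so there is nothing to do near $(2,5)$. For the discrete components, part~(a) says poles lie on $u = u_0 - 2\NN_0$ and $v = v_0 - 2\NN_0$ with $(u_0,v_0)$ zeros of $D_+(u,\Lambda)$, $D_-(v,\Lambda)$; using Corollary~\ref{harish-chandra} these zeros are governed by $\Lambda_1,\Lambda_2$, and for the finitely many discrete $\Lambda$ that actually occur (the relevant infinitesimal characters come from the $K$-type $(\kappa,\kappa) = (4,4)$ constraint via Propositions~\ref{Prop_Casimir2_Operation}, \ref{weissauers_bem_ueber_K-typ}) one checks directly that the pole lines avoid $(u,v)=(2,5)$, or that a pole in $u$ is cancelled by a zero in $v$ along $v = 2u+1$. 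For the $1$-dimensional continuous components, part~(c) puts possible simple poles on lines $u = c$ with $c \le 1$ — and $c = 1 < 2$, so these do not reach $u = 2$; part~(d) puts simple poles on $v = c \le 1$, again far from $v = 5$. So along the closure of $A$ at $(2,5)$ no spectral component contributes a pole, and the $L^2$-limit exists.

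For the final two assertions — formal holomorphy and a $C^\infty$ representative — I would argue as follows. At $s_1 = 0 = s_2$ (i.e. exactly $(u,v) = (2,5)$) the summand $H_\tau(g,0,0) = e^{2\pi i\trace(\tau z)} j(g,i)^{-\kappa}$ is holomorphic in $z$, so the limit function is ``formally holomorphic'' in the sense that each surviving term of the (now only conditionally convergent) series is annihilated by $E_-$ — this is visible from Lemma~\ref{Eplusminus_auf_j}, equation \eqref{EminusaufH}, since $\bar\partial$ kills $e^{2\pi i\trace(\tau z)}$ and kills $j(g,i)^{-\kappa} = \det(y)^{\kappa/2}\cdot(\text{holo})$ only after the $\det(y)^{s_2}$ factor is gone; more precisely one takes the limit of $E_- P(\cdot,u,2u+1)$ using the Casimir formulas of section~\ref{sec_casimir_action_on_P} and sees it tends to $0$ in $L^2$. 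For the $C^\infty$-representative: the $L^2$-limit lies in the joint kernel of a polynomial in the Casimir operators (take limits in $C_1 P$, $C_2 P$ as well, which converge because each shifted series on the right converges at the relevant point — the $C_1$ and $C_2$ formulas push $v$ up by $2$ or $4$ while only mildly changing $u$), hence in a space on which the center $\zz_\CC$ acts through a fixed finite-dimensional quotient; by elliptic regularity (the Casimir $C_1$ being elliptic modulo lower order, or simply because a $\zz_\CC$-finite $L^2$ vector is automatically smooth — a standard consequence of ellipticity of the Casimir) the limit has a $C^\infty$ representative.

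The main obstacle I anticipate is the pole-cancellation bookkeeping near $(u,v) = (2,5)$: Theorem~\ref{meromorpheFortsetzung} genuinely allows poles on finitely many lines $u = \mathop{const}$ and $v = \mathop{const}$, and one must be sure none of them passes through the corner $(2,5)$ along the diagonal slice $v = 2u+1$, or else that a $u$-pole is killed by a matching $v$-zero on that slice. This is exactly where the explicit prefactors $s_1(s_1-1)\cdots$ in \eqref{Omega_groesser_angewandt}--\eqref{Omega_kleiner_angewandt} do the work — they force the ``bad'' shifted series to drop out on $s_1 = 0$ — but verifying that the iteration of $R_-(v)$ (used in Theorem~\ref{meromorpheFortsetzung} to reach $\re v > 1$) does not reintroduce a singularity at $v = 5$ from the residual-spectrum lines $K_{\alpha_1}(1)$, $K_{\alpha_1+2\alpha_2}(1)$ requires tracking, through Corollary~\ref{cor_meromorphic_cont}(c), that the only $u$-pole that ``occurs'' is at $u = 1$, safely inside $\re u > \tfrac12$ but strictly to the left of $u = 2$. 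Once that is pinned down, taking the $L^2$-limit along $s_1 = 0$ is routine, and the holomorphy and smoothness follow from the Casimir/ellipticity argument above.
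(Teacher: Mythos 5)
Your overall frame (limit along $s_1=0$, spectral decomposition, collapse of the Casimir identities on that line, elliptic regularity at the end) is the paper's, but the proposal has a genuine gap exactly at the central difficulty. You assert that $R_-(v)$ is holomorphic near $v=5$ and that for the discrete components ``the pole lines avoid $(u,v)=(2,5)$'' or are cancelled. Neither holds: Propositions~\ref{prop_cont_spec_2-dim} and~\ref{prop_cont_spec_1-dim} control only the \emph{continuous} spectrum, and on the discrete component $\Lambda=(2,3)$ --- the holomorphic limit of discrete series of minimal $K$-type $(4,4)$, i.e.\ precisely the component the limit is supposed to land in --- one has $D_+(2,\Lambda)=(4-4)(9-4)=0$ and $D_-(5,\Lambda)=(25-25)(1-25)=0$, so \emph{both} resolvents are singular at the target point on the one component that matters. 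Moreover the series you want to feed into $R_-(2u+1)$, namely $P(\cdot,u,2u+3)$ and $P(\cdot,u,2u+5)$, do not converge near $u=2$: pushing $v$ up does nothing for the constraint $\re u>2$, so they sit on the boundary of $A$ and themselves require continuation in $u$, with possible poles at $u=2$ on the $\Lambda=(2,\Lambda_2)$ components. The paper's actual mechanism, which nothing in your proposal replaces, is: write $D_+(2)=D_+(u)+(u^2-4)\bigl(C_1-(u^2-1)\bigr)$, observe that on $s_1=0$ every term of $D_+(2)^2P(\cdot,u,2u+1)$ carries a factor $(u-2)$, conclude $\lim_{u\to2}\lVert D_+(2)^2P(\cdot,u,2u+1)\rVert=0$ by Schwarz, use boundedness of $R_+(2)$ on the continuous spectrum to kill $P_{\mathrm{cont}}$ and the factor $|D_+(2,\Lambda)|^{2n}$ to kill all discrete components with $D_+(2,\Lambda)\neq0$, and then run a separate residue computation (applying $C_1$ on the lines $s_1=0$ and $s_1=1$ and comparing the two evaluations of $\lim_{u\to2}D_-(2u+1)P_\Lambda$) to show the surviving component is $\Lambda=(2,3)$ alone.

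The formal-holomorphy step is also not sound as written: arguing that ``each surviving term of the series is annihilated by $E_-$'' presupposes termwise manipulation of a series that does \emph{not} converge at $(2,5)$ for $\kappa=4=2m$ --- that non-convergence is the entire point of the construction. The paper instead uses the spectral localization it has just established: on $\Lambda=(2,3)$ the Casimir $C_1$ acts by $\Lambda_1^2+\Lambda_2^2-5=8$, and $C_1=\trace(E_+E_-)+8$, whence $0=\langle\trace(E_+E_-)P,P\rangle=\sum_{i,j}\lVert(E_-)_{ij}P\rVert^2$. Your alternative of computing $\lim_{u\to2}E_-P(\cdot,u,2u+1)$ in $L^2$ could in principle substitute, but justifying the interchange of limit and differentiation again requires the spectral control you have not established. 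The elliptic-regularity argument for the $C^\infty$-representative is fine and agrees with the paper.
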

\begin{proof}[Proof of Theorem~\ref{thm_continuation_kappa_2m}]
{\it  Spectral pole components in $(u,v)=(2,5)$.}
As the limit point in question is in the boundary of the convergence area, the possible poles are those of the resolvent $R_+(2)$
we use for  continuation. They are at most simple.
By Cor.~\ref{cor_meromorphic_cont}, 
the continued Poincar\'e series do not have  poles within the continuous spectrum, as 
$u=2>\frac{1}{2}$ and $v=5>1$.
So the only possible pole components are the discrete ones indexed by $\Lambda=(\Lambda_1,\Lambda_2)$, for which
\begin{equation*}
 D_+(2,\Lambda)\:=\: 0\:.
\end{equation*}
Up to Weyl conjugation, these are $\Lambda=(2,\Lambda_2)$.

{\it The limit $P(\cdot,2,5):=\lim_{(u,v)\to(2,5)}P(\cdot,u,v)$ exists in $L^2(\Gamma\backslash G)$.}
Along  the limit series
$s_1=0$, equivalently, $v=2u+1$, we find using 
\begin{equation*}
 D_+(2)\:=\: D_+(u)+(u^2-4)\bigl(C_1-(u^2-1)\bigr)\:,
\end{equation*}
and (\ref{equ_dplus_kappa_2m}) 
\begin{eqnarray*}
 D_+(2)P(\cdot,u,2u+1)&=&(u^2-4)(C_1-(u^2-1))P(\cdot,u,2u+1)\\
&& +64\pi^2\det(\tau)(u-2)(u-3)P(\cdot,u+2,2u+5)\:.
\end{eqnarray*}
Thus,
\begin{eqnarray*}
D_+(2)^nP(\cdot,u,2u+1)&=& (u^2-4)^n(C_1-(u^2-1))^nP(\cdot,u,2u+1)\\
&&+ (u-2)\mathcal P(\cdot,u)\:,
\end{eqnarray*}
where $\mathcal P(\cdot,u)$ is a symbol for a $\CC[u]$-linear combination of Poincar\'e series actually 
converging in $(u,v)=(2,5)$. Choosing $n=2$ greater than the maximal possible pole oder in $(u,v)=(2,5)$, we have
\begin{equation*}
 \lim_{u\to 2}\lvert\!\lvert (u^2-4)^n(C_1-(u^2-1))^nP(\cdot,u,2u+1)\rvert\!\rvert\:=\:0
\end{equation*}
as well as
\begin{equation*}
 \lim_{u\to 2}\lvert\!\lvert (u-2)\mathcal P(\cdot,u)\rvert\!\rvert\:=\:0\:.
\end{equation*}
Applying Schwarz' inequality, we deduce
\begin{equation*}
 \lim_{u\to 2}\lvert\!\lvert D_+(2)^n P(\cdot,u,2u+1)\rvert\!\rvert^2\:=\:0\:.
\end{equation*}
Written according to the spectral decomposition,
\begin{eqnarray*}
 0&=& \sum_\Lambda \lvert D_+(2,\Lambda)\rvert^{2n}\lim_{u\to 2}\lvert\!\lvert  P_\Lambda(\cdot,u,2u+1)\rvert\!\rvert^2\\
&&+\lim_{u\to 2}\lvert\!\lvert  D_+(2)^nP_{\mathop{cont}}(\cdot,u,2u+1)\rvert\!\rvert^2\:.
\end{eqnarray*}
For the component of the continuous spectrum we deduce
\begin{eqnarray*}
 \lim_{u\to 2}\lvert\!\lvert  P_{\mathop{cont}}(\cdot,u,2u+1)\rvert\!\rvert^2
&=& \lim_{u\to 2}\lvert\!\lvert  R_+(2)^nD_+(2)^nP_{\mathop{cont}}(\cdot,u,2u+1)\rvert\!\rvert^2\\
&\leq & \lvert\!\lvert R_+(2)\rvert\!\rvert^{2n}_{\mathop{cont}}\cdot\lim_{u\to 2}
\lvert\!\lvert  D_+(2)^n P_{\mathop{cont}}(\cdot,u,2u+1)\rvert\!\rvert^2
\:=\:0\:.
\end{eqnarray*}
While for any discrete spectral component the limit $\lim_{u\to 2}\lvert\!\lvert  P_\Lambda(\cdot,u,2u+1)\rvert\!\rvert^2$
exists and is nonzero only if $D_+(2,\Lambda)=0$. 
So the limit
\begin{equation*}
P(\cdot,2,5)\::=\:\lim_{u\to 2}  P(\cdot,u,2u+1)
\end{equation*}
exits as $L^2$-function. 

{\it The only non-vanishing spectral component is indexed by $\Lambda=(2,3)$.}
Along the line $v=2u+1$ we have
\begin{eqnarray*}
 D_-(2u+1) P(\cdot,u,2u+1) &=& 128\pi^2u(u-2)P(\cdot,u,2u+5) \\
 && +64\pi(u-2)(u-1)(u+1)P(\cdot,u,2u+3)\\
 &&-256\det(\tau) (u-2)^2P(\cdot,u+2,2u+5).
\end{eqnarray*}
Here the functions $P(\cdot,u,2u+5)$ and $P(\cdot,u,2u+5)$ are meromorphically continued to $\re u>1$ with at most simple poles in $u=2$ at the spectral zeros $\Lambda=(2,\Lambda_2)$
of $D_+(2,\Lambda)$.
We describe their residues with help of the simple Casimir operator $C_1$, which acts by the scalar $\Lambda_2^2-1$ on the component indexed by $\Lambda=(2,\Lambda_2)$.
On $s_1=0$ we have
\begin{eqnarray*}
 C_1P(\cdot,u,2u+1)&=& 4(2s_2^2+5s_2+8) P(\cdot,u,2u+1)-8\pi(u-2)P(\cdot,u,2u+3)\:.
\end{eqnarray*}
So as $L^2$-functions
\begin{equation*}
 (\Lambda_2^2-9)\lim_{u\to 2} P_\Lambda(\cdot,u,2u+1)\:=\:-8\pi\lim_{u\to 2}(u-2)P_\Lambda(\cdot,u,2u+3)\:.
\end{equation*}
While on $s_1=1$,
\begin{eqnarray*}
 C_1(\cdot,u,2u+3)&=& 4(2s_2^2+7s_2+11) P(\cdot,u,2u+3)\\
 &&-8\pi uP(\cdot,u,2u+5)\\
 &&32\pi\det(\tau) P(\cdot,u+2,2u+5)\:.
\end{eqnarray*}
Here the last series $P(\cdot,u+2,2u+5)$ converges in $u=2$, so
\begin{equation*}
 (\Lambda_2^2-45)\lim_{u\to 2}(u-2)P_\Lambda(\cdot,u,2u+3)\:=\: -16\pi\lim_{u\to 2}(u-2)P_\Lambda(\cdot,u,2u+5)\:.
\end{equation*}
So on the one hand
\begin{eqnarray*}
 \lim_{u\to2}D_-(2u+1)P_\Lambda(\cdot,u,2u+1)&=&
 128\pi^2\lim_{u\to2}u(u-2)P_\Lambda(\cdot,u,2u+5)\\
 &&+64\pi\lim_{u\to2}(u-2)(u^2-1)P_\Lambda(\cdot,u,2u+3)\\
 &=&2(\Lambda_2^2-9)(\Lambda_2^2-45)\lim_{u\to2}P_\Lambda(\cdot,u,2u+1)\:.
\end{eqnarray*}
But on the other hand, by (\ref{Dminus_in_koordinaten}) 
\begin{equation*}
 \lim_{u\to2}D_-(2u+1)P_\Lambda(\cdot,u,2u+1)=\left(\Lambda_2^2-9\right)\left(\Lambda_2^2-49\right)\lim_{u\to 2}P_\Lambda(\cdot, u,2u+1)\:.
\end{equation*}
So for $\lim_{u\to 2}P_\Lambda(\cdot, u,2u+1)$ not to vanish we must have $\Lambda_2^2=9$ or $\Lambda_2^2=65$.
Here $\Lambda=(2,\pm\sqrt{65})$ is not the infinitesimal character of a spectral component of $L^2(\Gamma\backslash \Sp_2(\RR))$. It  does not belong to the Eisenstein spectrum
as $\lvert\!\lvert \Lambda\rvert\!\rvert^2>\lvert\!\lvert \delta\rvert\!\rvert^2=5$. It does not belong to the (limits of) discrete series, as $\sqrt{65}$ is not integer.
The only remaining spectral component of $\lim_{u\to 2}P(\cdot, u,2u+1)$ is indexed by $\Lambda=(2,3)$
and belongs to the holomorphic limit of discrete series $\Pi_{(2,3)}$ of minimal $K$-type $(4,4)$ (Prop.~\ref{weissauers_bem_ueber_K-typ}).

{\it The $L^2$-limit $P=(\cdot,2,5):=\lim_{u\to 2}P_\Lambda(\cdot, u,2u+1)$ is formally holomorphic.}
We have seen $P(\cdot,2,5)=P_\Lambda(\cdot,2,5)$ for $\Lambda=(2,3)$.
As $C_1(\Lambda)=8$ and $C_1=\trace(E_+E_-)-\kappa m(m+1-\kappa)=\trace(E_+E_-)+8$, we have
\begin{equation*}
 0\:=\:\langle \trace(E_+E_-)P(\cdot,2,5),P(\cdot,2,5)\rangle\:=\:\sum_{i,j=1,2}\lvert\!\lvert (E_-)_{ij}P(\cdot,2,5)\rvert\!\rvert^2\:,
\end{equation*}
so $P(\cdot,2,5)$ is formally holomorphic.

{\it The $L^2$-limit $P(\cdot,2,5)$ is represented by
a $C^\infty$-function.} 
Adding an appropriate term to the Casimir $\trace(E_+E_-)+8$, we obtain an elliptic differential operator. 
$P(\cdot,2,5)$ is the solution of a corresponding elliptic differential equation with $C^\infty$-coefficients.
By regularity theory (cf.~\cite[Chapter 3.6.2]{aubin}), $P(\cdot,2,5)$ itself has a representative which  
belongs to $C^\infty(\Gamma\backslash G)$. 
Especially, it is pointwise defined.
\end{proof}
\section{Holomorphic projection}\label{holomorphe_projektion}
Let $L^2_\kappa(\Gamma\backslash \mathcal H)$ be the Hilbert space of functions on $\Gamma\backslash \mathcal H$ of weight 
$\kappa$ with scalar product 
\begin{equation*}
 \langle f,g\rangle \:=\:\int_{\mathcal F} f(z)\overline{g(z)}\det(y)^\kappa~dv_z\:.
\end{equation*}
Define the following Poincar\'e series of weight $\kappa$  on the Siegel halfplane $\mathcal H$
\begin{equation*}
 p_\tau(z,s)\::=\:\sum_{\gamma\in\Gamma_\infty\backslash \Gamma}e^{2\pi i\trace(\tau\gamma\cdot z)}
\frac{\det(\im\gamma\Hop z)^s}{j(\gamma,z)^\kappa}\:.
\end{equation*}
As
\begin{equation*}
 p_\tau(g\Hop i,s) \:=\: j(g,i)^\kappa \mathcal P_\tau(g,0,s)\:,
\end{equation*}
analytic properties are inherited:
\begin{cor}
For $\re s+\frac{\kappa}{2}>m$, the series $p_\tau(z,s)$ converges absolutely and locally uniformly in $s$, 
and uniformly on the Siegel fundamental domain $\mathcal F$. For $m=1$ or $m=2$ and  $\kappa\geq 2m$ the limit
\begin{equation*}\label{poincare_auf_H}
 p_\tau(\cdot)\::=\:\lim_{s\to 0}p_\tau(\cdot,s)\:\in\: L^2_\kappa(\Gamma\backslash \mathcal H)
\end{equation*}
exists and defines a holomorphic cuspform of weight $\kappa$ for $\Gamma$.
\end{cor}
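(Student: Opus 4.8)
The plan is to read off everything from the identity $p_\tau(g\Hop i,s)=j(g,i)^\kappa\,\mathcal P_\tau(g,0,s)$ stated above, which identifies $p_\tau(\cdot,s)$, up to the automorphy factor, with the already analysed Poincar\'e series $\mathcal P_\tau(\cdot,0,s)$. For the convergence statement I would set $s_1=0$, $s_2=s$ in Corollary~\ref{cor_konvergenzbereich_u_v} (equivalently $l_1=0$, $l_2=s+\tfrac{\kappa}{2}$ in Theorem~\ref{Konvergenzbereich}): then $\mathcal P_\tau(\cdot,0,s)$, and with it $|p_\tau(z,s)|\le\det(\im z)^{-\kappa/2}\,S_\tau(z,0,\re s+\tfrac{\kappa}{2})$, converges absolutely for $\re(2s+\kappa)>2m$, i.e.\ $\re s+\tfrac{\kappa}{2}>m$. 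Writing $g=g_z$ in Iwasawa form one has $|j(g_z,i)^\kappa|=\det(\im z)^{-\kappa/2}$, which is bounded on the Siegel fundamental domain $\F$ since $\det(\im z)$ is bounded below there; combined with the bound for $S_\tau$ in Theorem~\ref{Konvergenzbereich}, which is uniform in $z$, this gives absolute convergence of $p_\tau(z,s)$, locally uniform in $s$ and uniform for $z\in\F$.

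For the existence of the limit I would distinguish cases. If $\kappa>2m$, then $\re 0+\tfrac{\kappa}{2}>m$, so the series converges already at $s=0$ by the first step; square integrability over $\F$ follows from the $\tau$-uniform $L^2$-bound in Theorem~\ref{Konvergenzbereich} together with the isometry below. If $m=1$, the only remaining boundary weight $\kappa=2$ is the classical case handled by Roelcke's theory of Eisenstein series for $\SL_2(\RR)$ (cf.\ the appendix). If $m=2$ and $\kappa=4=2m$, this is precisely Theorem~\ref{thm_continuation_kappa_2m}: the line $s_1=0$ is $v=2u+1$, and $s_2\to 0$ is $u\to 2$, so $\lim_{s\to 0}\mathcal P_\tau(\cdot,0,s)=\lim_{u\to 2}P(\cdot,u,2u+1)=P(\cdot,2,5)$ exists in $L^2(\Gamma\backslash G)$. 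To transport this to $\H$, I would use that $f\mapsto\bigl(z\mapsto j(g_z,i)^\kappa f(g_z)\bigr)$ identifies the weight-$\kappa$ subspace of $L^2(\Gamma\backslash G)$ with $L^2_\kappa(\Gamma\backslash\H)$, up to the normalizing factor $\vol(K)$: indeed $|j(g_z,i)^\kappa|^2=\det(\im z)^{-\kappa}$ cancels the weight $\det(y)^\kappa$ in the inner product on $L^2_\kappa(\Gamma\backslash\H)$, and the Haar measure on $G/K$ was chosen equal to the measure on $\H$. Hence $p_\tau(\cdot)=\lim_{s\to 0}p_\tau(\cdot,s)\in L^2_\kappa(\Gamma\backslash\H)$.

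For holomorphy and cuspidality I would again split. For $\kappa>2m$ the summand $H_\tau(g,0,0)=e^{2\pi i\trace(\tau z)}j(g,i)^{-\kappa}$ is holomorphic in $z$ and the series converges locally uniformly, so $p_\tau(\cdot,0)$ is holomorphic; positive definiteness of $\tau$ makes the standard Fourier-expansion argument show that every Siegel $\Phi$-operator annihilates it, so it is a cusp form (cf.~\cite{Klingen}). For $m=2$, $\kappa=4$, Theorem~\ref{thm_continuation_kappa_2m} already provides that $P(\cdot,2,5)$ is formally holomorphic with a $C^\infty$-representative; by \eqref{EminusaufH} the vanishing $(E_-)_{ij}P(\cdot,2,5)=0$ is equivalent (using $E_-j(g,i)=0$ and invertibility of $y$ and $J$) to $\bar\partial$-closedness of the corresponding weight-$\kappa$ function on $\H$, and a $\bar\partial$-closed $C^\infty$ function is holomorphic. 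Cuspidality then follows because, as shown in the proof of Theorem~\ref{thm_continuation_kappa_2m}, the only non-vanishing spectral component of $P(\cdot,2,5)$ has infinitesimal character $(2,3)$ and lies in the holomorphic limit of discrete series $\Pi_{(2,3)}$ of minimal $K$-type $(4,4)$, a cuspidal representation.

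The substance is carried entirely by the earlier results; the only genuine subtlety is the boundary weight $\kappa=2m$, where the series no longer converges at $s=0$ and one must invoke Theorem~\ref{thm_continuation_kappa_2m} (for $m=2$) or Roelcke's theory (for $m=1$) to produce the value at $s=0$. Everything else is bookkeeping of the coordinate change $(s_1,s_2)\leftrightarrow(u,v)$ and of the isometry between the weight-$\kappa$ part of $L^2(\Gamma\backslash G)$ and $L^2_\kappa(\Gamma\backslash\H)$.
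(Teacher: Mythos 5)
Your argument coincides with the paper's proof in all but one step: the convergence statement via $p_\tau(g\Hop i,s)=j(g,i)^\kappa\mathcal P_\tau(g,0,s)$ and Corollary~\ref{cor_konvergenzbereich_u_v}, the uniform bound $\lvert p_\tau(z,s)\rvert\leq\det(y)^{-\kappa/2}c(s)$ together with $\det y>c>0$ on $\F$, the case split for the existence of the limit (absolute convergence at $s=0$ for $\kappa>2m$, the genus-one theory for $m=1$, Theorem~\ref{thm_continuation_kappa_2m} for $m=2$, $\kappa=4$), and the passage from formal holomorphicity on the group to holomorphicity on $\H$ are exactly what the paper does. The genuine divergence is cuspidality. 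The paper argues uniformly for all cases: $p_\tau$ is a square-integrable holomorphic modular form of weight $\kappa\geq 2m$, and by Weissauer's theorem a non-cuspidal such form (a lift of a cusp form) can only occur in weight $\kappa<m$, a contradiction; hence $p_\tau$ is itself a cusp form. You instead use the classical $\Phi$-operator/Fourier-expansion argument for $\kappa>2m$ (fine; this is Klingen's original route) and, in the critical case $m=2$, $\kappa=4$, you assert cuspidality because the unique surviving spectral component lies in $\Pi_{(2,3)}$, ``a cuspidal representation''. That last step is under-justified as stated: knowing that the archimedean component is the holomorphic (limit of) discrete series of minimal $K$-type $(4,4)$ does not by itself exclude that $P(\cdot,2,5)$ sits in the residual, i.e.\ non-cuspidal, part of the discrete spectrum. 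To close this you would either fall back on the paper's Weissauer argument, or observe --- as the paper does for $\Lambda=(2,\pm\sqrt{65})$ in the proof of Theorem~\ref{thm_continuation_kappa_2m} --- that residues of Eisenstein series only produce infinitesimal characters with $\lVert\Lambda\rVert^2\leq\lVert\delta\rVert^2=5$, whereas $\lVert(2,3)\rVert^2=13>5$, so the component indexed by $(2,3)$ cannot be residual and must be cuspidal. With that one sentence added your proof is complete; the paper's route has the mild advantage of treating all weights $\kappa\geq 2m$ by a single argument that does not depend on the spectral analysis of Section~\ref{sec_casimir_action_on_P_geschlecht_2}.
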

\begin{proof}[Proof of Corollary~\ref{poincare_auf_H}]
This is the translation of  convergence  (Cor.~\ref{cor_konvergenzbereich_u_v}, \ref{corollar_genus_one})
 and continuation (Thm.~\ref{thm_continuation_kappa_2m}, \ref{thm_geschlecht_eins_holom}) 
on group level. 
The uniform convergence on $\F$ follows from 
\begin{equation*}
 \lvert p_\tau(z,s)\rvert\:\leq\: \det(y)^{-\frac{\kappa}{2}}c(s)\:,
\end{equation*}
and the fact that $\F$ is contained in a stripe  $\det y>c>0$. 
The holomorphicity is equivalent to the formal holomorphicity of the Poinca\'e series on group level
(see~\cite{weissauersLN}, p.~49). 
So $p_\tau$ is a holomorphic modular form of weight $\kappa\geq 2m$ which is square integrable.
It is either a cuspform or a lift of one (or a sum of both). For the latter, we get from~\cite[Theorem 2]{weissauer_paper}, 
 (see also~\cite[Lemma~9 and p.~108 Bemerkung]{weissauersLN}) the contradiction $\kappa<m$. 
So $p_\tau$ actually is a cuspform itself.
\end{proof}
Now we describe the projection to the holomorphic part of the spectrum in $L^2_\kappa(\Gamma\backslash \mathcal H)$.
For $m=2$ and $\kappa=4$, this is an application of our results.
It is well-known for high weight $\kappa>2m$ (\cite{panchishkin}) and for genus $m=1$ (\cite{gross-zagier}). 
We prove Theorem~\ref{satz_holomorphe_projektion} by the usual unfolding method.
It is nearly word-by-word \cite[IV.1]{gross-zagier}  or \cite[2.4]{panchishkin}, respectively.
We recall the gamma function of level $m$  
\begin{equation*}
 \Gamma_m(s)\:=\:\pi^{\frac{m(m-1)}{4}}\prod_{\nu=0}^{m-1}\Gamma(s-\frac{\nu}{2}) \:.
\end{equation*}
For $\re s>\frac{m-1}{2}$ there is a representation by the Euler integral
\begin{equation*}
 \Gamma_m(s)\:=\:\int_Y e^{-\trace(y)}\det(y)^{s-\frac{m+1}{2}}~dy\:,
\end{equation*}
where $Y=\{y=y'\in M_m(\RR)\mid y>0\}$. 
Define further $X:=\{x=x'\in M_m(\RR)\mid \lvert x_{jk}\rvert\leq \frac{1}{2}, 1\leq j,k\leq m\}$.
Recall (\cite[2.4]{panchishkin}) that a nonholomorphic modular form $F\in \tilde{\mathcal M}_\kappa(\Gamma)$ is of 
bounded growth if for all $\varepsilon>0$
\begin{equation*}
 \int_X\int_Y\lvert F(z)\rvert\det(y)^{\kappa-(m+1)}e^{-\varepsilon\trace(y)}~dy~dx <\infty\:.
\end{equation*}
If $\kappa\geq m+1$, equivalently, for an arbitrary $v\geq 0$ and all $\varepsilon>0$,
\begin{equation*}
 \int_X\int_Y\lvert F(z)\rvert\det(y)^{v}e^{-\varepsilon\trace(y)}~dy~dx \:<\:\infty\:.
\end{equation*}
\begin{thm}\label{satz_holomorphe_projektion}
Let $m=1$ or $m=2$ and let  $\kappa=2m$. 
Respectively, let $m$ be arbitrary and  let $\kappa>2m$.
 Let $F\in \tilde{\mathcal M}_\kappa(\Gamma)$ be of moderate growth and let
\begin{equation*}
 F(z)\:=\:\sum_{\tau}A_\tau(y)e^{2\pi i\trace(\tau x)}
\end{equation*}
be its Fourier expansion. That is, the sum is over all symmetic, half-integral $\tau$, 
and the coefficients $A_\tau$ are smooth functions on $Y$. 
Define for $\tau>0$
\begin{equation*}
 a(\tau)\::=\:c(m,\kappa)^{-1}\det(\tau)^{\kappa-\frac{m+1}{2}}\int_Y A_\tau(y)e^{-2\pi\trace(\tau y)}\det(y)^{\kappa-(m+1)}~dy\:,
\end{equation*}
where $c(m,\kappa):=(4\pi)^{m(\frac{m+1}{2}-\kappa)}\Gamma_m(\kappa-\frac{m+1}{2})$.
Then the function $\tilde F$ given by the Fourier expansion
\begin{equation*}
 \tilde F(z)\:=\:\sum_{\tau>0}a(\tau)e^{2\pi i\trace(\tau z)}
\end{equation*}
is a holomorphic cuspform of weight $\kappa$ for $\Gamma$ and for all $f\in\mathcal S_\kappa(\Gamma)$ we have 
\begin{equation*}
 \langle F,f\rangle\: =\: \langle \tilde F,f\rangle\:.
\end{equation*}
\end{thm}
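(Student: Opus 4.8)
The plan is to run the classical unfolding argument of Sturm and Gross--Zagier/Panchishkin (\cite[IV.1]{gross-zagier}, \cite[2.4]{panchishkin}); the only new input in the borderline cases $m=1,\kappa=2$ and $m=2,\kappa=4$ is the holomorphic continuation of the Poincar\'e series $p_\tau$ to $s=0$ provided by Corollary~\ref{poincare_auf_H} (which for $m=2$, $\kappa=4$ rests on Theorem~\ref{thm_continuation_kappa_2m}). Since $\mathcal S_\kappa(\Gamma)$ is finite-dimensional and cusp forms decay rapidly while $F$ has moderate growth, $\langle F,f\rangle$ is defined for all $f\in\mathcal S_\kappa(\Gamma)$, so I would set $G:=\sum_i\langle F,f_i\rangle f_i$ for an orthonormal basis $(f_i)$ of $\mathcal S_\kappa(\Gamma)$; then $G\in\mathcal S_\kappa(\Gamma)$ and $\langle F,f\rangle=\langle G,f\rangle$ for all $f\in\mathcal S_\kappa(\Gamma)$ by construction. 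It thus suffices to show that the Fourier coefficients of $G$ are exactly the numbers $a(\tau)$, i.e. $\tilde F=G$; this route avoids any separate discussion of convergence or modularity of the series defining $\tilde F$. Moreover, by Petersson's theory (\cite{petersson}) the holomorphic Poincar\'e series $p_\tau$, $\tau>0$, span $\mathcal S_\kappa(\Gamma)$, so it is enough to compare $\langle F,p_\tau\rangle$ with $\langle G,p_\tau\rangle$ for each $\tau>0$.

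For $\re s$ large, so that $p_\tau(\cdot,s)$ converges absolutely and uniformly on $\F$ (Corollary~\ref{poincare_auf_H}) and the bounded-growth hypothesis on $F$ makes the sum--integral interchange legitimate, I would unfold: using $F(z)\overline{p_\tau(z,s)}\det(y)^\kappa=\sum_{\gamma\in\Gamma_\infty\backslash\Gamma}F(\gamma\Hop z)\,\overline{e^{2\pi i\trace(\tau\gamma\Hop z)}}\det(\im\gamma\Hop z)^{\kappa+s}$, the integral over $\F$ against the invariant measure becomes one over $\Gamma_\infty\backslash\H$, and integrating in $x$ over $X$ extracts the $\tau$-th Fourier coefficient of $F$:
\begin{equation*}
 \langle F,p_\tau(\cdot,s)\rangle=\int_Y A_\tau(y)\,e^{-2\pi\trace(\tau y)}\det(y)^{s+\kappa-(m+1)}~dy.
\end{equation*}
The identical computation applied to the holomorphic form $G=\sum_{\sigma>0}b(\sigma)e^{2\pi i\trace(\sigma z)}$, whose $\tau$-th Fourier coefficient function is $b(\tau)e^{-2\pi\trace(\tau y)}$, together with the Euler integral $\int_Y e^{-\trace(By)}\det(y)^{w-\frac{m+1}{2}}~dy=\det(B)^{-w}\Gamma_m(w)$ at $B=4\pi\tau$ and $w=s+\kappa-\frac{m+1}{2}$, yields
\begin{equation*}
 \langle G,p_\tau(\cdot,s)\rangle=b(\tau)\,(4\pi)^{-m(s+\kappa-\frac{m+1}{2})}\det(\tau)^{-(s+\kappa-\frac{m+1}{2})}\Gamma_m(s+\kappa-\tfrac{m+1}{2}).
\end{equation*}

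Finally I would pass to the limit $s\to 0$. On the left, $p_\tau(\cdot,s)\to p_\tau$ in $L^2_\kappa(\Gamma\backslash\H)$ by Corollary~\ref{poincare_auf_H}, hence $\langle G,p_\tau(\cdot,s)\rangle\to\langle G,p_\tau\rangle$; on the right, the bounded-growth bound $\int_X\int_Y|F(z)|\det(y)^v e^{-\eps\trace(y)}~dy~dx<\infty$ (valid for all $v\ge 0$, $\eps>0$ since $\kappa\ge m+1$) dominates the integrand of $\int_Y A_\tau(y)e^{-2\pi\trace(\tau y)}\det(y)^{s+\kappa-(m+1)}~dy$ uniformly for $\re s$ near $0$, so that integral is continuous at $s=0$ and $\langle F,p_\tau(\cdot,s)\rangle\to\langle F,p_\tau\rangle$. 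Using $c(m,\kappa)=(4\pi)^{m(\frac{m+1}{2}-\kappa)}\Gamma_m(\kappa-\frac{m+1}{2})$ and the definition of $a(\tau)$, comparing the two limits gives $\langle F,p_\tau\rangle=c(m,\kappa)\det(\tau)^{-(\kappa-\frac{m+1}{2})}a(\tau)$ and $\langle G,p_\tau\rangle=c(m,\kappa)\det(\tau)^{-(\kappa-\frac{m+1}{2})}b(\tau)$ for every $\tau>0$, whence $b(\tau)=a(\tau)$. Therefore $\tilde F=G\in\mathcal S_\kappa(\Gamma)$ is a holomorphic cusp form and $\langle F,f\rangle=\langle G,f\rangle=\langle\tilde F,f\rangle$ for all $f\in\mathcal S_\kappa(\Gamma)$.

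The main obstacle is exactly the passage $s\to 0$: away from it the identities are the routine unfolding of \cite[IV.1]{gross-zagier} and \cite[2.4]{panchishkin}, but the existence of the $L^2$-limit $p_\tau$ together with a genuinely holomorphic $C^\infty$-representative is the low-weight difficulty settled in Theorem~\ref{thm_continuation_kappa_2m} (and its genus-one analogue), and the hypothesis $\kappa\ge 2m$ rather than $\kappa>2m$ enters the proof only through this point; the moderate-growth hypothesis is what then licenses the dominated-convergence step on the $F$-side.
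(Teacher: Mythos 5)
Your proposal is correct and follows essentially the same route as the paper: unfolding $\langle F,p_\tau(\cdot,s)\rangle$ to the Fourier-coefficient integral, introducing the Riesz representative of $f\mapsto\langle F,f\rangle$ on $\mathcal S_\kappa(\Gamma)$, evaluating its pairing with $p_\tau$ by the Euler integral, and passing to $s=0$ via the holomorphic continuation of $p_\tau$ supplied by Theorem~\ref{thm_continuation_kappa_2m} and its genus-one analogue. The only differences are cosmetic (dominated convergence in place of the paper's analytic-continuation phrasing at $s=0$, and an explicit orthonormal-basis construction of the representative; the appeal to Petersson's spanning result is not actually needed).
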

\begin{proof}[Proof of Theorem~\ref{satz_holomorphe_projektion}]
$F$ being of bounded growth,
 the following integral exists  for $\re s\geq 0$
\begin{eqnarray*}
 &&\int_X\int_Y F(z)e^{-2\pi i\trace(\tau \bar z)}\det(y)^{s+\kappa}~dv_z\\
&&\hspace*{1cm}=\int_X\int_Y\sum_{\tilde \tau}A_{\tilde\tau}(y)e^{2\pi i\trace((\tilde\tau-\tau)x)}
e^{-2\pi\trace(\tau y)}\det(y)^{s+\kappa-(m+1)}~dx~dy\\
&&\hspace*{1cm}=\int_Y A_\tau(y)e^{-2\pi\trace(\tau y)}\det(y)^{s+\kappa-(m+1)}~dy\:.
\end{eqnarray*}
The value at $s=0$ of the last expression is by definition
\begin{equation*}
\int_Y A_\tau(y)e^{-2\pi\trace(\tau y)}\det(y)^{s+k-(m+1)}~dy\arrowvert_{s=0}\:=\:
c(m,\kappa)\det(y)^{\frac{m+1}{2}-\kappa}a(\tau)\:.
\end{equation*}
On the other hand, for $\re s+\frac{\kappa}{2}>m$, we get by unfolding
\begin{eqnarray*}
 \langle F, p_\tau(\cdot,\bar s)\rangle &=&
\int_{\mathcal F}F(z)\sum_{\gamma\in\Gamma_\infty\backslash \Gamma}e^{-2\pi i\trace(\tau\gamma\cdot\bar z)}
\frac{\det(\im\gamma\Hop z)^{s}}{\overline{j(\gamma,z)}^\kappa}\det(y)^\kappa~dv_z\\
&=&
\int_{\mathcal F}\sum_{\gamma\in\Gamma_\infty\backslash \Gamma}F(\gamma.z)e^{-2\pi i\trace(\tau\gamma\cdot\bar z)}
\det(\im\gamma\Hop z)^{s+\kappa}~dv_z\\
&=&
\int_X\int_Y F(z)e^{-2\pi i\trace(\tau\bar z)}\det(y)^{s+\kappa}~dv_z\:,
\end{eqnarray*}
where we may interchange  integration and summation because the Poincar\'e series converges uniformly in vertical stripes
 with $\det y>c>0$.
The two sides of this equation have analytic continuation to $s=0$, which must be equal:
\begin{equation*}
 \langle F,p_\tau\rangle\:=\:c(m,\kappa)\det(\tau)^{\frac{m+1}{2}-\kappa}a(\tau)\:.
\end{equation*}
Further, there exists a function
\begin{equation*}
 \tilde F(z)\:=\:\sum_{\tau>0}b(\tau)e^{2\pi i\trace(\tau z)}\:\in\:\mathcal S_\kappa(\Gamma)
\end{equation*}
which represents the antilinear mapping $\mathcal S_\kappa(\Gamma)\to\CC$, $f\mapsto\langle F,f\rangle$. 
That is, for all $f\in\mathcal S_\kappa(\Gamma)$,
\begin{equation*}
 \langle F,f\rangle\:=\:\langle \tilde F,f\rangle\:.
\end{equation*}
By the same calculation as for $F$, we find
\begin{eqnarray*}
 \langle \tilde F,p_\tau\rangle &=& b(\tau)\int_Ye^{-4\pi\trace(\tau y)}\det(y)^{\kappa-(m+1)}~dx\:dy\\
&=&b(\tau)c(m,\kappa)\det(\tau)^{\frac{m+1}{2}-\kappa}\:,
\end{eqnarray*}
which is valid for $\kappa>m$. As $p_\tau$ itself is a holomorphic cuspform, we must \linebreak have $a(\tau)=b(\tau)$.
\end{proof}
\section*{Appendix -- The case genus one}\label{section_geschlecht_eins}

The classical case of genus $m=1$  traces back to Selberg and Roelcke. Especially, the case $\kappa=m+1=2$ is part of~\cite{gross-zagier}.
The methods used in this paper apply to genus one and this case gives  a short outline of the ideas
omitting the technical requirements for genus two. 
The specific arguments for $m=1$ are due to Rainer Weissauer.

Consider the symplectic group of genus one, that is the special linear group $\SL_2(\RR)$. 
The spectral decomposition of 
 $L^2(\SL_2(\ZZ)\backslash \SL_2(\RR))$ with respect to the action of $\SL_2(\RR)$ by right translations 
is well known (see for example~\cite{lang}).
We use the common Langlands parameter $\Lambda$. So we have the Hilbert space orthogonal sum
\begin{equation*}
 L^2(\SL_2(\ZZ)\backslash \SL_2(\RR)) \:=\:  
L^2_{cont}(\SL_2(\ZZ)\backslash \SL_2(\RR))\:\bigoplus_\Lambda\: L^2_\Lambda(\SL_2(\ZZ)\backslash \SL_2(\RR))
\end{equation*}
of isotypical components $L^2_\Lambda(\SL_2(\ZZ)\backslash \SL_2(\RR))$ of irreducible
unitary representations $\Pi_\Lambda$ of infinitesimal character $\Lambda$, and the 
continuous spectrum \linebreak $L^2_{cont}(\SL_2(\ZZ)\backslash \SL_2(\RR))$. 
%
%
The  Casimir operator $C_1$  is given by (cf. Corollary~\ref{cor_casimir_operatoren})
\begin{equation*}
 C_1\:=\: E_+E_-+B^2-2B,
\end{equation*}
where
\begin{equation*}
 E_\pm \:=\:\begin{pmatrix} 1&\pm i\\\pm i&-1\end{pmatrix},\quad B\:=\:\frac{i}{2}\begin{pmatrix}0&-1\\1&0\end{pmatrix}
\end{equation*}
are the  generators of $\gg_\CC=\mathfrak{sl}_2(\RR)_\CC$.
The image of $C_1$ under the Harish-Chandra homomorphism is $\Lambda^2-1$.
On the continuous spectrum we have $\Lambda\in i\RR$, so
the Casimir $C_1$ is negative definite, being bounded from above by $-1$. 
While the discrete spectrum contains exactly one representation for which $\Lambda^2-1=0$ which contains the $K$-type $\kappa=2$, 
namely the discrete series representation of lowest $K$-type $-\kappa=-2$, and further representations which satisfy 
$\Lambda^2-1>c$ for a constant $c>0$.

Let  $\Gamma_\infty$ be the subgroup of translations in $\SL_2(\ZZ)$.
We define Poincar\'e series
of weight $\kappa=m+1=2$   by
\begin{equation*}
 P_\tau(g,s) \:=\:\sum_{\gamma\in \Gamma_\infty\backslash \SL_2(\ZZ)} h_\tau(\gamma g,s),
\end{equation*}
where $\tau$ is a positive integer and for 
\begin{equation*}
 g\:=\:\begin{pmatrix}
        y^{\frac{1}{2}}&xy^{-\frac{1}{2}}\\0&y^{-\frac{1}{2}}
       \end{pmatrix}
\begin{pmatrix}
 \cos\theta&\sin\theta\\-\sin\theta&\cos\theta
\end{pmatrix} \in \SL_2(\RR)
\end{equation*}
and $s\in \CC$ the function $h_\tau$ is t given by
\begin{equation*}
 h_\tau(g,s)\:=\:e^{2\pi i\tau z}\frac{y^{s-\frac{1}{2}}}{j(g,i)^\kappa} = 
e^{2\pi i\tau(x+iy)}y^{\frac{\kappa-1}{2}+s}e^{i\kappa\theta}.
\end{equation*}
The convergence result of Theorem~\ref{Konvergenzbereich} for genus $m=1$ is:
\begin{cor}\label{corollar_genus_one}
For genus $m=1$ and $\kappa=m+1$ the Poincar\'e series
\begin{equation*}
 P_\tau^1(g,s)\:=\:\sum_{\gamma\in\Gamma_\infty\backslash \Gamma} 
e^{2\pi i\tau\gamma\cdot z}\frac{(\im \gamma\Hop z)^{\re s-\frac{1}{2}}}{j(\gamma g,i)^2}
\end{equation*}
converges absolutely and belongs to $L^2(\Gamma\backslash\SL_2(\RR))$ in case $\re s>\frac{1}{2}$.
\end{cor}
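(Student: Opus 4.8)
The plan is to derive Corollary~\ref{corollar_genus_one} as the genus-one, scalar-weight-two specialization of Theorem~\ref{Konvergenzbereich}. For $1\times 1$ matrices the trace and the determinant both coincide with the single entry, so for $m=1$ one has $\trace(\tau y)=\tau y$ and $\det(y)=y$, and for $g\in\SL_2(\RR)$ the automorphy factor satisfies $|j(g,i)|^{-2}=\im(g\Hop i)$. Consequently the general summand $H_\tau(g,s_1,s_2)$ of Definition~\ref{def_allgemeine_poincarereihe}, taken at weight $\kappa=2$ with $s_1=0$ and $s_2=s-\frac12$, has absolute value $e^{-2\pi\tau y}\,y^{\re s+\frac12}$, which is exactly the modulus of the summand $e^{2\pi i\tau\gamma\Hop z}(\im\gamma\Hop z)^{\re s-1/2}j(\gamma g,i)^{-2}$ of $P_\tau^1(g,s)$. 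Thus $P_\tau^1(\cdot,s)$ has the same series of absolute values as $\mathcal P_\tau(\cdot,0,s-\frac12)$ at weight $\kappa=2$, and its absolute convergence, locally uniform in $s$ and uniform on the fundamental domain, is governed by Theorem~\ref{Konvergenzbereich}.

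Concretely I would write that series of moduli as $S_\tau(z,l_1,l_2)$ with the real parameters $l_1=0$ and $l_2=\re s+\frac12$, and apply part~(a) of Theorem~\ref{Konvergenzbereich} (the branch $\re l_1\geq 0$, $\re l_2>m$): for $m=1$ this gives absolute and locally uniform convergence precisely when $l_2=\re s+\frac12>1$, i.e.\ $\re s>\frac12$, which is the asserted range. Square-integrability $P_\tau^1(\cdot,s)\in L^2(\Gamma\backslash\SL_2(\RR))$ is then part of the same theorem, since $S_\tau$ is shown there to belong to $L^2(\Gamma\backslash G)$ with bounds independent of the base point. Equivalently, one may read the statement off Corollary~\ref{cor_konvergenzbereich_u_v}: for $m=1$, $\kappa=2$, $s_1=0$ the convergence cone $\re(2s_2+\kappa)>2m$ and $\re(\frac{2}{m}s_1+2s_2+\kappa)>2m$ collapses to the single inequality $\re s_2>0$, i.e.\ $\re s>\frac12$.

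I do not expect a genuine obstacle here: the analytic work --- the reproducing-integral argument of Proposition~\ref{prop_klingen_trick_allg} together with Lemmas~\ref{prop_sphaerisches_integral} and~\ref{lemma_u_abschaetzungen} --- has already been carried out in the proof of Theorem~\ref{Konvergenzbereich}, and in the case $m=1$ the auxiliary kernel is just the classical $\im z\,\im w/|z-\bar w|^2$ while Lemma~\ref{lemma_spuren_symmetrischer_matrizen} is vacuous. The one point needing care is the bookkeeping of exponents: the weight contributes the factor $|j(g,i)|^{-\kappa}=\det(y)^{\kappa/2}$, so the exponent of $y$ entering the convergence analysis is $(\re s-\frac12)+\frac{\kappa}{2}=\re s+\frac12$, which explains both the appearance of $\re s$ rather than $s$ in the statement (absolute convergence being insensitive to $\im s$) and why it is this shifted exponent that must exceed $m=1$. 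If a self-contained genus-one treatment is preferred, the same bound follows from the classical fact that $\sum_{\gamma\in\Gamma_\infty\backslash\Gamma}(\im\gamma\Hop z)^{\sigma}<\infty$ for $\sigma>1$, which controls the tail of $\sum_\gamma e^{-2\pi\tau\im\gamma\Hop z}(\im\gamma\Hop z)^{\re s+1/2}$, combined with the standard unfolding over $\Gamma_\infty\backslash\Gamma$ to verify the $L^2$-bound.
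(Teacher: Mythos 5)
Your proposal is correct and follows exactly the route the paper takes: the corollary is presented there as nothing more than the specialization of Theorem~\ref{Konvergenzbereich} to $m=1$, $\kappa=2$, and your bookkeeping (absorbing $|j(g,i)|^{-2}=\im(g\Hop i)$ into the exponent to get $l_1=0$, $l_2=\re s+\tfrac12>1$) is precisely the required verification. The additional remarks on the classical Eisenstein-series bound are a valid alternative but not needed.
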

We usually write
\begin{equation*}
 P_\tau(\cdot,s)\:=\:P_{\textrm{cont}}(\cdot,s)+\sum_{\Lambda} P_\Lambda(\cdot,s)  
\end{equation*}
according to the Hilbert space decomposition. 
Next we compute the action of $C_1$ on the Poincar\'e series. 
The constraint $\kappa=m+1$ implies that the share of $B^2-2B$ of the Casimir vanishes on the Poincar\'e series of 
weight $\kappa$.
We have
\begin{eqnarray}
 C_1 h_\tau(g,s)&=& 16 j(g,i)^\kappa\partial(y^2\bar\partial(e^{2\pi i\tau z}y^s))\nonumber\\
&=& 4(s^2-\frac{1}{4})h_\tau(g,s)-16\pi\tau (s-\frac{1}{2}) h_\tau(g,s+1)\:.\label{geschlecht_null_casimir_gleichung}
\end{eqnarray}
This  result can  be  verified easily using the Casimir operator in its classical shape (\cite{lang}, X,~\S2)
\begin{equation*}
 C_1 \:=\:4y^2\left(\partial_x^2+\partial_y^2\right)-4y\partial_x\partial_\theta
\end{equation*}
for functions on $\SL_2(\RR)$, or by a short evaluation of Proposition~\ref{Prop_Casimir_auf_f} in case $m=1$.
Notice that for functions of level $\kappa$, the operators $E_\pm$ are the shift $\pm 2$-operators (\cite{lang}, VI,~\S5).
The Casimir  $C_1$ corresponds  to the Laplacian on the Siegel halfplane up to a factor $-\frac{1}{4}$.
As we study a  representation by right translations, we may sum up all the left-translates of 
Equation~(\ref{geschlecht_null_casimir_gleichung})
to get
\begin{equation}\label{geschlecht_eins_casimir_gleichung}
 C_1 P_\tau(g,s)\:=\: 4(s^2-\frac{1}{4}) P_\tau(g,s) -16\pi\tau (s-\frac{1}{2}) P_\tau(g,s+1)\:,
\end{equation}
and the Poincar\'e series $P_\tau(g,s+1)$ on the right hand side actually converges  for $\re s>-\frac{1}{2}$.
\begin{thm}
 The resolvent $R(s)=\bigl(C_1-4(s^2-\frac{1}{4})\bigr)^{-1}$ exists for $\re s>0$ and $s$ outside a discrete set containing
$s=\frac{1}{2}$, where it has a simple poles.
\end{thm}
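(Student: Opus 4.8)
The plan is to read $R(s)$ off the spectral decomposition of $L^2(\SL_2(\ZZ)\backslash\SL_2(\RR))$ recalled above. Since $C_1$ acts on the isotypical component $L^2_\Lambda$ by the Harish-Chandra scalar $\Lambda^2-1$ (and on the continuous part by the same scalar with $\Lambda\in i\RR$), the operator $C_1-4(s^2-\tfrac14)$ acts on the $\Lambda$-component by $(\Lambda^2-1)-(4s^2-1)=\Lambda^2-4s^2=(\Lambda-2s)(\Lambda+2s)$. Hence $R(s)$ exists as a bounded operator precisely when the scalar $\Lambda^2-4s^2$ stays bounded away from $0$ along the whole spectrum, in which case $R(s)$ acts by $(\Lambda^2-4s^2)^{-1}$ on each component and is analytic in $s$ by the standard resolvent calculus (\cite{hirzebruch-scharlau}). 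So the whole proof reduces to a scalar estimate on each type of spectral component.

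First I would treat the continuous spectrum together with the tempered part of the discrete spectrum, where $\Lambda=it$ with $t\in\RR$. There $\Lambda^2-4s^2=-(t^2+4s^2)$, and writing $s=\sigma+i\rho$ with $\sigma=\re s>0$ one has $|t^2+4s^2|^2=(t^2+4\sigma^2-4\rho^2)^2+64\sigma^2\rho^2$, which is bounded below by $64\sigma^2\rho^2>0$ if $\rho\neq0$ and by $16\sigma^4>0$ if $\rho=0$, uniformly in $t$. So on this part $R(s)$ exists and is bounded for every $s$ with $\re s>0$.

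It remains to treat the discrete series components, where $\Lambda$ is real and, by the spectral information recorded in the appendix, runs over a discrete set of values of which $\Lambda=\pm1$ is the unique one with $\Lambda^2-1=0$ (the one carrying the scalar $K$-type $\kappa=2$), all others satisfying $\Lambda^2-1\geq c$ for a fixed $c>0$; moreover these values escape to infinity. For fixed $s$ with $\re s>0$ we have $|\Lambda^2-4s^2|\to\infty$ as $|\Lambda|\to\infty$, so it is small for only finitely many occurring $\Lambda$, and it vanishes precisely when $s=\pm\Lambda/2$, \ie\ $s=\Lambda/2>0$. Thus $R(s)$ fails to exist only on the discrete set $\{\Lambda/2:\Lambda\text{ occurs},\ \Lambda>0\}$, which contains $s=\tfrac12$ because the $K$-type-$2$ component ($\Lambda=\pm1$) occurs; off this set all the scalars $\Lambda^2-4s^2$ are bounded away from $0$ locally uniformly in $s$, so $R(s)$ exists and is analytic. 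At a point $s_0$ of the exceptional set, on each of the finitely many components with $\Lambda=2s_0$ the resolvent acts by $\bigl((2s_0)^2-4s^2\bigr)^{-1}=\bigl(4(s_0-s)(s_0+s)\bigr)^{-1}$, which has a simple pole at $s=s_0$ since $s_0+s\neq0$ there, while on all remaining components $R(s)$ is bounded near $s_0$; hence $R(s)$ has a simple pole at each such $s_0$, in particular at $s=\tfrac12$.

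\emph{Main obstacle.} The only genuinely non-formal ingredient is the elementary spectral-gap statement for $\SL_2$ invoked above: apart from the single value $\Lambda^2-1=0$ realised by the scalar $K$-type-$2$ (limit of) discrete series, the remaining discrete parameters are bounded away from $0$ and escape to infinity, so that for each fixed $s$ only finitely many of them can obstruct invertibility and the exceptional set is discrete. Granting this, the theorem is immediate from the scalar computations above.
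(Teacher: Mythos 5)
Your proposal is correct and follows essentially the same route as the paper: pass to the spectral decomposition, read off the scalar $\Lambda^2-4s^2$ via the Harish-Chandra homomorphism, bound it away from zero on the continuous part for $\re s>0$, and observe that the real zeros $s=\Lambda/2$ coming from the discrete spectrum form a discrete set containing $\tfrac12$, at which the scalar vanishes to first order. Your explicit lower bound $\lvert t^2+4s^2\rvert^2=(t^2+4\sigma^2-4\rho^2)^2+64\sigma^2\rho^2$ on the continuous spectrum is in fact more careful than the paper's stated bound $4\lvert s\rvert^2$, and the finiteness/escape-to-infinity remark you flag as the main obstacle is exactly what the paper implicitly uses to guarantee $\min_\Lambda\lvert\Lambda^2-4s^2\rvert>0$.
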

\begin{proof}
 We compute the image of $\bigl(C_1-4(s^2-\frac{1}{4})\bigr)$ under the Harish-Chandra homomorphism,
\begin{equation*}
 \Lambda\bigl(C_1-4(s^2-\frac{1}{4})\bigr) \:=\:\Lambda^2-4s^2\:.
\end{equation*}
For the continuous spectrum  the zeros $s=\pm\frac{\Lambda}{2}$ are contained in
$i\RR$. Restricting to the case $\re s>0$, the operator $\bigl(C_1-4(s^2-\frac{1}{4})\bigr)$ is therefore injective
on the continuous spectrum and we have
\begin{equation*}
 \lvert C_1-4(s^2-\frac{1}{4})\rvert \geq 4\lvert s\rvert^2\:.
\end{equation*}
In the discrete spectrum we actually have zeros $s=\pm\frac{\Lambda}{2}$. But as $\Lambda$ takes discrete values only, these
zeros are discrete as well. For example, $s=\frac{1}{2}$ is a zero for $\Lambda=\pm1$.
Apart from these zeros,
\begin{equation*}
 \lvert C_1-4(s^2-\frac{1}{4})\rvert \geq \min_\Lambda\{\lvert\Lambda^2-4s^2\rvert\}
\:=\::c(s)>0\:,
\end{equation*}
so the operator $\bigl(C_1-4(s^2-\frac{1}{4})\bigr)^{-1}$ exists 
if we assume $\re s>0$. As we have
\begin{equation*}
 \lvert\!\lvert \bigl(C_1-4(s^2-\frac{1}{4})\bigr)^{-1}\rvert\!\rvert\leq \max\{\frac{1}{c(s)},\frac{1}{4\lvert s\rvert^2}\}\:,
\end{equation*}
it is bounded.
As the singularities are simple zeros of $\Lambda\bigl(C_1-4(s^2-\frac{1}{4})\bigr)$, they are simple poles of $R(s)$.
\end{proof}
\begin{cor}
 The Poincar\'e series $P_\tau(\cdot,s)$ is continued meromorphically to the area $\re s>0$. It has a pole in $s=\frac{1}{2}$
of at most order one.
\end{cor}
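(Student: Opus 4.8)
The plan is to bootstrap from the Casimir functional equation~(\ref{geschlecht_eins_casimir_gleichung}) using the resolvent $R(s)$ whose existence was just established. First I would rewrite~(\ref{geschlecht_eins_casimir_gleichung}) as
\[
  \bigl(C_1-4(s^2-\tfrac14)\bigr)\,P_\tau(\cdot,s)\;=\;-16\pi\tau\,(s-\tfrac12)\,P_\tau(\cdot,s+1),
\]
an identity in $L^2(\Gamma\backslash\SL_2(\RR))$ valid for $\re s>\frac12$, where by Corollary~\ref{corollar_genus_one} both Poincar\'e series converge absolutely and lie in $L^2$. Applying the resolvent $R(s)=\bigl(C_1-4(s^2-\frac14)\bigr)^{-1}$, which by the preceding theorem exists as a bounded operator for $\re s>0$ outside a discrete set of simple poles, yields
\[
  P_\tau(\cdot,s)\;=\;-16\pi\tau\,(s-\tfrac12)\,R(s)\,P_\tau(\cdot,s+1),
\]
still for $\re s>\frac12$ away from the poles of $R(s)$.

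The key point is then that the right-hand side is manifestly meromorphic on the larger half-plane $\re s>0$. Indeed the shifted series $P_\tau(\cdot,s+1)$ converges absolutely in $L^2$ already for $\re s>-\frac12$ (Corollary~\ref{corollar_genus_one} applied with parameter $s+1$), so it is a holomorphic $L^2$-valued function on $\re s>0$, whereas $R(s)$ is a meromorphic operator-valued function there. By uniqueness of meromorphic continuation this formula \emph{is} the continuation of $P_\tau(\cdot,s)$ to $\re s>0$, and its values lie in $L^2(\Gamma\backslash\SL_2(\RR))$ since $R(s)$ is bounded wherever it is defined.

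For the behaviour at $s=\frac12$ I would argue that the poles of the continued series can occur only at poles of $R(s)$, all of which are simple. The pole of $R(s)$ at $s=\frac12$ arises from the spectral parameter $\Lambda=\pm1$, on whose component $R(s)$ acts by the scalar $(1-4s^2)^{-1}$ and thus has a simple pole at $s=\frac12$; this pole is cancelled by the explicit factor $(s-\frac12)$ in front, so $P_\tau(\cdot,s)$ has at most a pole of order one at $s=\frac12$, as claimed.

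I do not anticipate a genuine obstacle here: once the resolvent theorem is in hand this is a soft argument. The only thing requiring a little care is the bookkeeping of domains --- checking that the shift $s\mapsto s+1$ gains precisely the strip $-\frac12<\re s\le\frac12$ needed to reach the region $\re s>0$ on which $R(s)$ lives, and confirming that the identity, once verified on the cone of convergence $\re s>\frac12$, propagates by analytic/meromorphic continuation to all of $\re s>0$.
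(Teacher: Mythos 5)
Your proposal is correct and follows essentially the same route as the paper: apply the resolvent $R(s)$ to the Casimir identity~(\ref{geschlecht_eins_casimir_gleichung}) so that $-16\pi\tau(s-\tfrac12)R(s)P_\tau(\cdot,s+1)$ furnishes the meromorphic continuation to $\re s>0$, with the pole at $s=\tfrac12$ inherited from the simple pole of $R(s)$. Your extra observation that the factor $(s-\tfrac12)$ cancels that simple pole in fact gives slightly more than the stated ``at most order one'' and is consistent with the existence of the limit proved in Theorem~\ref{thm_geschlecht_eins_holom}.
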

\begin{proof}
By equation (\ref{geschlecht_eins_casimir_gleichung}),
$R(s)\bigl(-16\pi\tau s P_\tau(g,s+1)\bigr)$ defines the meromorphic continuation. As $R(s)$ has got a simple pole in 
$s=\frac{1}{2}$,
the resulting pole of the continuation is at most simple.
\end{proof}
%
For the limit $s\to \frac{1}{2}$ we 
apply the Casimir operator once more 
\begin{eqnarray}
 C_1^2 P_\tau(g,s) &=& 16(s^2-\frac{1}{4})^2P_\tau(g,s) - 64\pi\tau (s-\frac{1}{2})^2(s+\frac{1}{2})P_\tau(g,s+1)
\label{C_1^2_auf_P_Fall_g=1}\\
&&-16\pi (s-\frac{1}{2})C_1 P_\tau(g,s+1)\:.\nonumber
\end{eqnarray}
Notice that the limit of the $L^2$-norm of each single term on the right hand side  is zero as 
\begin{equation*}
\lim_{s\to \frac{1}{2}} \: \lvert\!\lvert (s-\frac{1}{2})^2P(\cdot, s)\rvert\!\rvert\:=\:0\quad
\textrm{ and }\quad
 \lim_{s\to \frac{1}{2}} \: \lvert\!\lvert (s-\frac{1}{2})P(\cdot, s+1)\rvert\!\rvert\:=\:0\:.
\end{equation*}
Applying Schwarz' inequality to the  norm of Equation~(\ref{C_1^2_auf_P_Fall_g=1}), we find
\begin{equation}\label{C^2_auf_P_gleich_0}
 \lim_{s\to \frac{1}{2}}\: \lvert\!\lvert C_1^2 P_\tau(\cdot,s)\rvert\!\rvert^2 \:=\: 0\:,
\end{equation}
or written according to the spectral decomposition,
\begin{equation*}
 0\:=\: \sum_\Lambda \lvert \Lambda(C_1)\rvert^2 \lim_{s\to \frac{1}{2}}\lvert\!\lvert P_\Lambda(\cdot,s)\rvert\!\rvert^2 
\:+\:
\lim_{s\to \frac{1}{2}}\lvert\!\lvert C_1^2 P_{\textrm{cont}}(\cdot,s)\rvert\!\rvert^2\:.
\end{equation*}
We deduce $\lim_{s\to \frac{1}{2}}\lvert\!\lvert C_1^2 P_{\textrm{cont}}(\cdot,s)\rvert\!\rvert=0$. 
As $C_1^2$ is positive definite on the 
continuous spectrum, thus
$\lim_{s\to \frac{1}{2}}\lvert\!\lvert  P_{\textrm{cont}}(\cdot,s)\rvert\!\rvert=0$. 
Further, $\lim_{s\to \frac{1}{2}}\lvert\!\lvert P_\Lambda(\cdot,s)\rvert\!\rvert =0$ as long as $\Lambda(C_1)\not=0$.
So the only nontrivial spectral component surviving the limit is that belonging to $\Lambda= 1$.
%
Accordingly, we have an analytic continuation of the Poincar\'e series $P_\tau(\cdot,s)$  to the point $s=\frac{1}{2}$ 
as $L^2$-function,
 and
the only nontrivially continued spectral component is that belonging
to the discrete series of minimal weight $\kappa=2$.
But now 
\begin{equation*}
 C_1 P_\tau(\cdot,\frac{1}{2})\:=\: (E_+E_-) P_\tau(\cdot,\frac{1}{2})\:=\: 0\:,
\end{equation*}
which implies
\begin{equation*}
 0\:=\:\langle (E_+E_-) P_\tau(\cdot,\frac{1}{2}),  P_\tau(\cdot,\frac{1}{2})\rangle\:=\:
-\langle E_- P_\tau(\cdot,\frac{1}{2}), E_- P_\tau(\cdot,\frac{1}{2})\rangle\:.
\end{equation*}
That is, $E_-P_\tau(\cdot,\frac{1}{2})=0$ in $L^2(\Gamma\backslash G)$, that is $P_\tau(\cdot,\frac{1}{2})$ is formally holomorphic. 
Now $C_1$ can easily be changed to an elliptic differential operator by adding appropriate terms.
So the limit $P_\tau(\cdot,\frac{1}{2})$ is the solution of an elliptic differential equation with $C^\infty$-coefficients.
As local regularity of the coefficients is inherited by the solution, $P_\tau(\cdot,\frac{1}{2})$ itself is $C^\infty$.
We have proved:
\begin{thm}\label{thm_geschlecht_eins_holom}
 The limit
\begin{equation*}
 P_\tau(\cdot,\frac{1}{2})\::=\:\lim_{s\to \frac{1}{2}} P_\tau(\cdot,s)
\end{equation*}
exists in $ L^2(\SL_2(\ZZ)\backslash \SL_2(\RR))$. Its only nonzero spectral component belongs to the discrete series 
$\Pi_{\Lambda=1}$.
Thus, $P_\tau(\cdot,\frac{1}{2})$ is a zero of the Casimir operator.
It is represented by a $C^\infty(\Gamma\backslash G)$-function, which  is actually holomorphic.
\end{thm}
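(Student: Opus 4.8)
The strategy is to push the Casimir relation~(\ref{geschlecht_eins_casimir_gleichung}) one step further and then read everything off the spectral decomposition, with the single Casimir $C_1$ playing the role that $D_+(u)$ and $D_-(v)$ play in genus two. First I would apply $C_1$ once more to~(\ref{geschlecht_eins_casimir_gleichung}) to obtain~(\ref{C_1^2_auf_P_Fall_g=1}), which writes $C_1^2P_\tau(\cdot,s)$ as a $\CC[s]$-combination of $(s-\frac{1}{2})^2P_\tau(\cdot,s)$, $(s-\frac{1}{2})^2(s+\frac{1}{2})P_\tau(\cdot,s+1)$ and $(s-\frac{1}{2})C_1P_\tau(\cdot,s+1)$. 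Since the meromorphic continuation of $P_\tau(\cdot,s)$ has at most a simple pole at $s=\frac{1}{2}$, while $P_\tau(\cdot,s+1)$ and, by Corollary~\ref{cor_differentiation_poincare_reihen}, $C_1P_\tau(\cdot,s+1)$ already converge for $\re s>-\frac{1}{2}$, every summand on the right of~(\ref{C_1^2_auf_P_Fall_g=1}) tends to $0$ in $L^2$-norm as $s\to\frac{1}{2}$; Schwarz' inequality then yields $\lim_{s\to\frac{1}{2}}\lvert\!\lvert C_1^2P_\tau(\cdot,s)\rvert\!\rvert=0$, which is~(\ref{C^2_auf_P_gleich_0}).

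Next I would decompose this norm identity along $L^2(\SL_2(\ZZ)\backslash\SL_2(\RR))=L^2_{\textrm{cont}}\oplus\bigoplus_\Lambda L^2_\Lambda$, obtaining $0=\sum_\Lambda\lvert\Lambda(C_1)\rvert^4\lim_{s\to\frac12}\lvert\!\lvert P_\Lambda(\cdot,s)\rvert\!\rvert^2+\lim_{s\to\frac12}\lvert\!\lvert C_1^2P_{\textrm{cont}}(\cdot,s)\rvert\!\rvert^2$, where $\Lambda(C_1)=\Lambda^2-1$. On the continuous spectrum $\Lambda\in i\RR$, hence $\Lambda(C_1)\le-1$ and $\lvert\!\lvert C_1^2P_{\textrm{cont}}\rvert\!\rvert\ge\lvert\!\lvert P_{\textrm{cont}}\rvert\!\rvert$, which forces $\lvert\!\lvert P_{\textrm{cont}}(\cdot,s)\rvert\!\rvert\to0$; on the discrete spectrum $\Lambda^2-1$ vanishes only at $\Lambda=\pm1$, and the spectral gap (i.e.\ $\Lambda^2-1>c>0$ for the remaining discrete $K$-type-$2$ infinitesimal characters) gives $\lvert\!\lvert P_\Lambda(\cdot,s)\rvert\!\rvert\to0$ for all $\Lambda\ne\pm1$. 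For the surviving piece I would restrict~(\ref{geschlecht_eins_casimir_gleichung}) to $L^2_{\Lambda=1}$, where $C_1$ acts by $0$; cancelling the common factor $s-\frac{1}{2}$ gives $P_{\pm1}(\cdot,s)=\frac{4\pi\tau}{s+\frac12}\,P_{\pm1}(\cdot,s+1)$, which is regular at $s=\frac12$ because $P_\tau(\cdot,\frac32)$ converges. Therefore $\lim_{s\to\frac12}P_\tau(\cdot,s)$ exists in $L^2$ and equals its $\Lambda=\pm1$ component, which lies in the holomorphic discrete series $\Pi_{\Lambda=1}$ of weight $\kappa=2$; since $\Lambda(C_1)=0$ there, $P_\tau(\cdot,\frac12)$ is annihilated by $C_1$.

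For holomorphicity I would use $C_1=E_+E_-+B^2-2B$ (Corollary~\ref{cor_casimir_operatoren}) together with Lemma~\ref{Lemma_j_fuer_U}: on the scalar $K$-type $\kappa=2$ one has $\pi(B)=\kappa$, so $B^2-2B$ acts by $\kappa^2-2\kappa=0$ and $0=C_1P_\tau(\cdot,\frac12)=E_+E_-P_\tau(\cdot,\frac12)$. Pairing with $P_\tau(\cdot,\frac12)$ and using the adjunction $E_+^\ast=-E_-$ for the right-regular representation yields $\lvert\!\lvert E_-P_\tau(\cdot,\frac12)\rvert\!\rvert^2=0$, i.e.\ $E_-P_\tau(\cdot,\frac12)=0$, so the limit is formally holomorphic. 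Finally, adding a suitable element of $\mathfrak U(\kk_\CC)$ to $C_1$ produces an elliptic operator with constant coefficients that annihilates $P_\tau(\cdot,\frac12)$; elliptic regularity (cf.~\cite[Chapter~3.6.2]{aubin}) then provides a $C^\infty(\Gamma\backslash G)$-representative, and a $C^\infty$ function killed by the antiholomorphic operator $E_-$ is a genuine holomorphic modular form of weight $\kappa=2$.

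The step I expect to be the crux is passing from $\lim\lvert\!\lvert C_1^2P_\tau(\cdot,s)\rvert\!\rvert=0$ to \emph{existence} of the $L^2$-limit. This uses the spectral gap in an essential way---both the uniform lower bound $\lvert\Lambda^2-1\rvert\ge1$ on the continuous part and a strictly positive gap between $\Lambda=\pm1$ and the other discrete $K$-type-$2$ infinitesimal characters---and it also requires the separate functional-equation argument showing that the residual $\Lambda=\pm1$ component genuinely has a limit rather than just a bounded residue. The remaining ingredients---the Casimir computation~(\ref{geschlecht_null_casimir_gleichung}), the adjunction $E_+^\ast=-E_-$, and the elliptic-regularity conclusion---are standard for $\SL_2(\RR)$ and should not pose real difficulties.
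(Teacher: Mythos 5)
Your proposal follows the paper's own argument step for step: the $C_1^2$ identity (\ref{C_1^2_auf_P_Fall_g=1}), the vanishing of $\lVert C_1^2P_\tau(\cdot,s)\rVert$ via Schwarz' inequality, the spectral decomposition killing the continuous part (where $\Lambda(C_1)\leq -1$) and every discrete component with $\Lambda(C_1)\neq 0$, the pairing $\langle E_+E_-P,P\rangle=-\lVert E_-P\rVert^2$ for formal holomorphicity, and elliptic regularity for the $C^\infty$-representative. The one place you are more explicit than the paper --- deriving $P_{\pm1}(\cdot,s)=\tfrac{4\pi\tau}{s+1/2}P_{\pm1}(\cdot,s+1)$ from (\ref{geschlecht_eins_casimir_gleichung}) to show that the surviving $\Lambda=\pm1$ component genuinely has a limit rather than a simple pole --- is a detail the paper leaves implicit (it is the analogue of the $D_-$/$C_1$ manipulation in the genus-two proof), and your treatment of it is correct.
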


\end{document}